\theoremstyle{definition}
\newtheorem{thm}{Theorem}[section]
\newtheorem{defn}[thm]{Definition}
\newtheorem{lem}[thm]{Lemma}
\newtheorem*{commr}{\textcolor{red}{Comment}}
\newtheorem*{commb}{\textcolor{blue}{To do}}
\newcommand{\mbfb}{\mathbf b}
\newcommand{\mbfc}{\mathbf c}
\newcommand{\mbfy}{\mathbf y}
\newcommand{\mbfA}{\mathbf A}
\newcommand{\mbfD}{\mathbf D}
\newcommand{\mbfH}{\mathbf H}
\newcommand{\mbfM}{\mathbf M}
\newcommand{\mbfP}{\mathbf P}
\renewcommand{\O}{\mathcal O}
\newcommand{\abs}[1]{\lvert #1\rvert}
\DeclareMathOperator*{\eq}{=}
\DeclareFontFamily{U}{mathx}{\hyphenchar\font45}
\DeclareFontShape{U}{mathx}{m}{n}{<-> mathx10}{}
\DeclareSymbolFont{mathx}{U}{mathx}{m}{n}
\DeclareMathAccent{\widebar}{0}{mathx}{"73}
\newcommand{\PPDS}{\widehat P}
\newcommand{\DPDS}{\widehat D}
\newcommand{\pPDS}{\widehat p}
\newcommand{\dPDS}{\widehat d}
\title{Unconditionally positive and conservative third order modified Patankar-Runge-Kutta discretizations of production-destruction systems}
\author[a]{Stefan Kopecz}
\author[a]{Andreas Meister}
\affil[a]{Institute of Mathematics, University of Kassel, Kassel, Germany}
\date{\today}
\begin{document}
\maketitle
\abstract{Modified Patankar-Runge-Kutta (MPRK) schemes are numerical methods for the solution of positive and conservative production-destruction systems. They adapt explicit Runge-Kutta schemes in a way to ensure positivity and conservation irrespective of the time step size.

The first two members of this class, the first order MPE scheme and the second order MPRK22(1) scheme, were introduced in \cite{BDM2003} and have been successfully applied in a large number of applications. Recently, we introduced a general definition of MPRK schemes and presented a thorough investigation of first and second order MPRK schemes in \cite{KopeczMeister2017}.

A potentially third order Patankar-type method was introduced in \cite{FormaggiaScotti2011}.
This method uses the MPRK22(1) scheme of \cite{BDM2003} as a predictor and a modification of the BDF(3) multistep method as a corrector. 
It restricts to the MPRK22(1) approximation, whenever the positivity of the corrector cannot be guaranteed.
Hence, this method is at most third order accurate and at least second order accurate.

In this paper we continue the work of \cite{KopeczMeister2017} and present necessary and sufficient conditions for third order MPRK schemes. For the first time, we introduce MPRK schemes, which are third order accurate independent of the specific positive and conservative system under consideration. The theoretical results derived within the first part are subsequently confirmed by numerical experiments for the entire domain of linear and nonlinear as well as nonstiff and stiff systems of differential equations.}
\section{Introduction}
A wide variety of mathematical models for real life problems are given in the form of a system of partial differential equations including stiff production-destruction terms. The development of numerical methods is therefore often based on splitting approaches, where the discretization of the convection and diffusion terms is conducted within a first step and the approximation of the source terms is realized  subsequently. Thereby, the time step size of the first part should also be applicable within the second step and even particular properties like conservativity of the source terms and positivity of the constituents have to be maintained independent of the time step size in order to obtain a reliable, appropriate and efficient simulation. Whereas finite volume schemes are well established for the discretization of convection-diffusion equations, the development of unconditionally positivity preserving and conservative methods of higher order for stiff systems of ordinary differential equations is still a challenge. To overcome this gap, the paper is devoted to the derivation and investigation of a class of third order modified Patankar schemes. Therefore, we consider production-destruction systems (PDS) of the form
\begin{align}\label{eq:pds}
 \frac{d y_i}{dt}(t)=P_i(\mbfy(t))-D_i(\mbfy(t)),\quad i=1,\dots,N.
\end{align}
By $\mbfy=(y_1,\dotsc, y_N)^T$ we denote the vector of constituents, which depends on time $t$. Both, the production terms $P_i$ and the destruction terms $D_i$ are assumed to be non-negative, that is $P_i, D_i\geq 0$ for $i=1,\dots,N$.
Furthermore, the production and destruction terms can be written as
\begin{align}\label{eq:pijdij}
 P_i(\mbfy) = \sum_{j=1}^N p_{ij}(\mbfy),\quad D_i(\mbfy) = \sum_{j=1}^N d_{ij}(\mbfy),
\end{align}
where $d_{ij}(\mbfy)\geq 0$ is the rate at which the $i$th constituent transforms into the $j$th component, while $p_{ij}(\mbfy)\geq 0$ is the rate at which the $j$th constituent transforms into the $i$th component.

We are interested in PDS which are positive as well as fully conservative.
\begin{defn}
The PDS \eqref{eq:pds} is called \textit{positive}, if positive initial values,  $y_i(0)> 0$ for $i=1,\dots,N$, imply positive solutions, $y_i(t)>0$ for $i=1,\dots,N$, for all times $t>0$.
\end{defn}
\begin{defn}\label{defn:consode}
The PDS \eqref{eq:pds}, \eqref{eq:pijdij} is called \textit{conservative}, if for all $i,j=1,\dots,N$ and $\mbfy\geq 0$, we have
$
p_{ij}(\mbfy) = d_{ji}(\mbfy). 
$
The system is called \textit{fully conservative}, if in addition
$
p_{ii}(\mbfy)=d_{ii}(\mbfy)=0
$
holds for all $\mbfy\geq 0$ and $i=1,\dots,N$.
\end{defn}
In the following, we will assume that the PDS \eqref{eq:pds} is fully conservative, since every conservative PDS can be rewritten as an equivalent fully conservative PDS.
For a fully conservative PDS, \eqref{eq:pijdij} can be written as
\[
P_i(\mbfy) = \sum_{\substack{j=1\\j\ne i }}^N p_{ij}(\mbfy),\quad D_i(\mbfy) = \sum_{\substack{j=1\\j\ne i }}^N d_{ij}(\mbfy).
\]
But for the sake of a simple notation, we will always use the form \eqref{eq:pijdij}.
Examples of positive and conservative PDS, which model academic as well as realistic applications, can be found in Section~\ref{sec:testcases}.

If a PDS is conservative the sum of its constituents $\sum_{i=1}^N y_i(t)$ remains constant in time, since we have
\[
\frac{d}{dt}\sum_{i=1}^N y_i=\sum_{i=1}^N \bigl(P_i(\mbfy)-D_i(\mbfy)\bigr) = \sum_{i,j=1}^N \bigl(p_{ij}(\mbfy)-d_{ij}(\mbfy)\bigr) = \sum_{i,j=1}^N \bigl(\underbrace{p_{ij}(\mbfy)-d_{ji}(\mbfy)}_{=0}\bigr) = 0.
\]
This motivates the definition of a conservative numerical scheme.
\begin{defn}
Let $\mbfy^n$ denote an approximation of $\mbfy(t^n)$ at time level $t^n$. 
 The one-step method
 \[
 \mbfy^{n+1} = \mbfy^n + \Delta t \Phi(t^n,\mbfy^n,\mbfy^{n+1},\Delta t)
 \]
 is called
 \begin{itemize}
 \item \textit{unconditionally conservative}, if 
 \[
 \sum_{i=1}^N \left(y_i^{n+1}-y_i^n\right)=0
 \]
 is satisfied for all $n\in\mathbb N$ and $\Delta t>0$.
 \item \textit{unconditionally positive}, if it guarantees $\mbfy^{n+1}>0$ for all $\Delta t>0$ and $\mbfy^n>0$.
 \end{itemize}
\end{defn}

An explicit $s$-stage Runge-Kutta method for the solution of an ordinary differential equation
$y'(t) = f(t,y(t))$
is given by
\begin{equation*}
\begin{split}
 y^{(k)} &= y^n + \Delta t\sum_{\nu=1}^{k-1} a_{k\nu} f(t^n+c_\nu \Delta t,y^{(\nu)}),\quad k=1,\dots,s,\\
 y^{n+1}&=y^n+\Delta t \sum_{k=1}^s b_k 
 f(t^n+c_k\Delta t,y^{(k)}).
\end{split}
\end{equation*}
The method is characterized by its coefficients $a_{k\nu}$, $b_k$, $c_k$ for $k=1,\dots,s$, $\nu=1,\dots,k-1$ and can be represented 
by the Butcher tableau
\[
\begin{array}{c|c}
\mbfc        &    \mbfA\\\hline
          & \mbfb
\end{array},
\]
with $\mbfA = (a_{k\nu})_{k,\nu=1,\dots,s}$, $\mbfc=(c_1,\dots,c_s)^T$ and $\mbfb=(b_1,\dots,b_s)$.
Applied to \eqref{eq:pds} the method reads
\begin{subequations}
\begin{align}\label{rkscheme}
 y_i^{(k)} &= y_i^n + \Delta t\sum_{\nu=1}^{k-1} a_{k\nu} 
 \sum_{j=1}^N \left(p_{ij}(\mbfy^{(k)})-d_{ij}(\mbfy^{(k)})\right),\quad k=1,\dots,s,\\ \label{eq:rkschemenp1}
 y_i^{n+1}&=y_i^n+\Delta t \sum_{k=1}^s b_k 
 \sum_{j=1}^N\left( p_{ij}(\mbfy^{(k)})-d_{ij}(\mbfy^{(k)})\right).
\end{align}
\end{subequations}

The idea of the modified Patankar-Runge-Kutta (MPRK) schemes is to adapt explicit Runge-Kutta schemes in such a way that they become positive irrespective of the chosen time step size $\Delta t$, while still maintaining their inherent property to be conservative. 
One approach to achieve unconditional positivity is the so-called Patankar-trick introduced in \cite{Patankar1980} as \textit{source term linearization} in the context of turbulent flow. If we modify \eqref{eq:rkschemenp1} and add a weighting of the destruction terms like
\begin{equation*}
y_i^{n+1} = y_i^n + \Delta t\sum_{k=1}^{s} b_{k} 
 \sum_{j=1}^N\biggl( p_{ij}(\mbfy^{(k)})-d_{ij}(\mbfy^{(k)})\frac{y_i^{n+1}}{\sigma_i}\biggr),
\end{equation*} 
we obtain
\begin{equation*}
y_i^{n+1} = \frac{y_i^n + \Delta t\sum_{k=1}^s b_k 
 \sum_{j=1}^N p_{ij}(\mbfy^{(k)})}{1 + \Delta t\sum_{k=1}^s b_k 
 \sum_{j=1}^N d_{ij}(\mbfy^{(k)})/\sigma_i}.
\end{equation*} 
Thus, if $y_i^n$, the weights $b_k$ for $k=1,\dots,s$ and $\sigma_i$ are positive, so is $y_i^{n+1}$. 
The crucial idea of the Patankar-trick is to multiply the destruction terms with weights that comprise $y_i^{n+1}$ as a factor themselves.

Weighting only the destruction terms will result in a non-conservative scheme. So the production terms have to be weighted accordingly as well. Since we have $d_{ij}(\mbfy)=p_{ji}(\mbfy)$, the proper weight for $p_{ij}(\mbfy^{(k)})$ is $y_j^{n+1}/\sigma_j$.
\begin{defn}\label{def:MPRKdefn}
Given a non-negative Runge-Kutta matrix $\mbfA=(a_{ij})_{i,j=1,\dots,s}$, non-negative weights $b_1,\dotsc,b_s$ and $\delta\in\{0,1\}$, the scheme
\begin{subequations}\label{eq:MPRK}
\begin{align}\label{eq:MPRKstages}
y_i^{(k)} &= y_i^n + \Delta t\sum_{\nu=1}^{k-1} a_{k\nu} 
 \sum_{j=1}^N \biggl(p_{ij}(\mbfy^{(\nu)})(1-\delta)+p_{ij}(\mbfy^{(\nu)})\frac{y_j^{(k)}}{\pi^{(k)}_j}\delta-d_{ij}(\mbfy^{(\nu)})\frac{y_i^{(k)}}{\pi^{(k)}_i}\biggr),\quad k=1,\dots,s,\\\label{eq:MPRKapprox}
 y_i^{n+1}&=y_i^n+\Delta t \sum_{k=1}^s b_k 
 \sum_{j=1}^N\biggl( p_{ij}(\mbfy^{(k)})\frac{y_j^{n+1}}{\sigma_j}-d_{ij}(\mbfy^{(k)})\frac{y_i^{n+1}}{\sigma_i}\biggr),
\end{align}
\end{subequations}
for $i=1\dots,N$, is called \textit{modified Patankar-Runge-Kutta scheme} (MPRK) if
\begin{enumerate}
 \item $\pi_i^{(k)}$ and $\sigma_i$ are unconditionally positive for $k=1,\dots, s$ and $i=1,\dots,N$, 
 \item $\pi_i^{(k)}$ is independent of $y_i^{(k)}$ and $\sigma_i$ is independent of $y_i^{n+1}$ for $k=1,\dots, s$ and $i=1,\dots,N$.
\end{enumerate}
The weights $1/\sigma_i$ and $1/\pi_i^{(k)}$ are called Patankar-weights and the denominators $\sigma_i$ and $\pi_i^{(k)}$ are called Patankar-weight denominators (PWD).
\end{defn}

Due to the introduction of the Patankar-weights, $s$ linear systems of size $N\times N$ need to be solved to obtain the stage values and the approximation at the next time level. 
In consideration of $p_{ii}=d_{ii}=0$ for $i=1,\dots,N$, the scheme \eqref{eq:MPRK} can be written in matrix-vector notation as
\begin{subequations}\label{eq:MPRKMVs}
\begin{align}\label{eq:MPRKMVstage}
 \mbfM^{(k)}\mbfy^{(k)}&=\mbfy^n+(1-\delta)\Delta t\mbfP(\mbfy^n),\quad k=1,\dots,s,\\\label{eq:MPRKMV}
 \mbfM\mbfy^{n+1}&=\mbfy^n,
\end{align}
\end{subequations}
with $\mbfP(\mbfy^n)=(P_1(\mbfy^n),\dots,P_N(\mbfy^n))^T$ and 
 \begin{equation}\label{eq:MPRKmatvecstage}
 \begin{split}
	m_{ii}^{(k)} &= 1 + \Delta t\sum_{\nu=1}^{k-1} a_{k\nu}
 \sum_{j=1}^N d_{ij}(\mbfy^{(\nu)})/\pi_i^{(k)} >0,\quad i=1,\dots, N,\\
  m_{ij}^{(k)} &=-\Delta t\delta\sum_{\nu=1}^{k-1} a_{k\nu}  p_{ij}(\mbfy^{(\nu)})/\pi_j^{(k)} \leq 0,\quad i,j=1,\dots,N,\,i\ne j,
  \end{split}
 \end{equation}
 for $k=1,\dots,s$ and
  \begin{equation}\label{MPRKmatvec}
   \begin{split}
  m_{ii} &= 1 + \Delta t\sum_{k=1}^s b_k 
 \sum_{j=1}^N d_{ij}(\mbfy^{(k)})/\sigma_i >0,\quad i=1,\dots,N,\\
  m_{ij} &=-\Delta t\sum_{k=1}^s b_k  p_{ij}(\mbfy^{(k)})/\sigma_j \leq 0,\quad i,j=1,\dots,N,\,i\ne j.
  \end{split}
 \end{equation}
If $\delta = 0$, the matrices $\mbfM^{(k)}$ become diagonal and the production terms appear on the right hand side of \eqref{eq:MPRKMVstage}.
 
The following two lemmas of \cite{KopeczMeister2017} show that MPRK schemes, as defined in Definition~\ref{def:MPRKdefn}, are indeed unconditionally positive and conservative. 
\begin{lem}\label{lem:MPRKcons}
 A MPRK scheme \eqref{eq:MPRK} applied to a conservative PDS is unconditionally conservative. 
 If $\delta=1$, the same holds for all stage values, this is
 $\sum_{i=1}^N (y_i^{(k)}-y_i^n)=0$
 for $k=1,\dots,s$.
\end{lem}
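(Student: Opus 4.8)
The plan is to sum the defining relations over the constituent index $i$ and exploit the conservation identity $d_{ij}(\mbfy)=p_{ji}(\mbfy)$ to force a pairwise cancellation between the weighted production and destruction contributions. The whole argument is algebraic: the only structural input is the symmetry between $p$ and $d$ together with the specific way the Patankar weights are attached.

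First I would treat the update step \eqref{eq:MPRKapprox}. Subtracting $y_i^n$ and summing over $i=1,\dots,N$ gives
\[
\sum_{i=1}^N (y_i^{n+1}-y_i^n) = \Delta t\sum_{k=1}^s b_k \sum_{i,j=1}^N\biggl(p_{ij}(\mbfy^{(k)})\frac{y_j^{n+1}}{\sigma_j}-d_{ij}(\mbfy^{(k)})\frac{y_i^{n+1}}{\sigma_i}\biggr).
\]
In the destruction sum I would replace $d_{ij}(\mbfy^{(k)})$ by $p_{ji}(\mbfy^{(k)})$ using conservation, and then relabel the dummy indices $i\leftrightarrow j$. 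This rewrites the destruction sum as $\sum_{i,j} p_{ij}(\mbfy^{(k)})\,y_j^{n+1}/\sigma_j$, which coincides exactly with the production sum. Hence the inner double sum vanishes for every $k$, the right-hand side is zero, and unconditional conservation follows, independently of $\Delta t$.

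For the stage values under the assumption $\delta=1$, I would carry out the identical manipulation on \eqref{eq:MPRKstages}. Setting $\delta=1$ removes the term carrying the factor $(1-\delta)$, so that the production is weighted by $y_j^{(k)}/\pi_j^{(k)}$ and the destruction by $y_i^{(k)}/\pi_i^{(k)}$. Summing over $i$, substituting $d_{ij}=p_{ji}$, and swapping the summation indices produces the same cancellation, yielding $\sum_{i=1}^N(y_i^{(k)}-y_i^n)=0$ for each $k=1,\dots,s$.

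The computation itself is routine once the bookkeeping is set up; there is no analytic obstacle. The real content—and the only point demanding care—is the precise pairing of the Patankar weights: $p_{ij}$ is scaled by $\sigma_j$ (respectively $\pi_j^{(k)}$) while $d_{ij}$ is scaled by $\sigma_i$ (respectively $\pi_i^{(k)}$), so that after the substitution $d_{ij}=p_{ji}$ and the relabelling the two scalings align and the terms cancel. With any other pairing the telescoping would fail, so verifying this alignment is the essential step of the proof.
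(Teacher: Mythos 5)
Your argument is correct: summing over $i$, substituting $d_{ij}=p_{ji}$, and relabelling the dummy indices is exactly the cancellation that establishes conservation for \eqref{eq:MPRKapprox} and, when $\delta=1$, for the stages \eqref{eq:MPRKstages}, and you correctly identify the alignment of the Patankar weights ($p_{ij}$ with $\sigma_j$, $d_{ij}$ with $\sigma_i$) as the one point that matters. The paper itself states this lemma without proof, deferring to \cite{KopeczMeister2017}, where the same index-swap argument is used, so there is nothing to add.
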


\begin{lem}\label{lem:MPRKpos}
A MPRK scheme \eqref{eq:MPRK} is unconditionally positive. 
 The same holds for all the stages of the scheme, this is for all $\Delta t>0$ and $\mbfy^n>0$ we have $\mbfy^{(k)}>0$ for $k=1,\dots,s$.
\end{lem}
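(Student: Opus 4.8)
The plan is to reduce unconditional positivity to two facts about the linear systems \eqref{eq:MPRKMVs}: that each coefficient matrix is invertible with an entrywise nonnegative inverse whose diagonal is strictly positive, and that each right-hand side is componentwise positive. Since the stage values are computed successively and the entries of $\mbfM^{(k)}$ and $\mbfM$ involve $p_{ij}$ and $d_{ij}$ evaluated at previously computed iterates, I would argue by induction on the stage index $k$, using $\mbfy^n>0$ as the base input. The induction hypothesis $\mbfy^{(\nu)}>0$ for $\nu<k$ guarantees $p_{ij}(\mbfy^{(\nu)})\ge 0$ and $d_{ij}(\mbfy^{(\nu)})\ge 0$, which is exactly what is needed for the sign pattern of $\mbfM^{(k)}$ to be correct; the positivity of the denominators $\pi_i^{(k)}$ and $\sigma_i$ is supplied directly by Definition~\ref{def:MPRKdefn}.

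The decisive structural step is to show that each relevant matrix is a nonsingular M-matrix. By \eqref{eq:MPRKmatvecstage} and \eqref{MPRKmatvec} the matrices $\mbfM^{(k)}$ (for $\delta=1$) and $\mbfM$ are Z-matrices: their diagonal entries are strictly positive and their off-diagonal entries are non-positive. I would then compute the column sums. Fixing a column index $j$ and summing $m_{ij}$ over $i$, the diagonal contribution contains $\sum_{l} d_{jl}$ while the off-diagonal contributions supply $-\sum_{i\ne j} p_{ij}$; invoking conservativity $p_{ij}=d_{ji}$ from Definition~\ref{defn:consode} together with $p_{jj}=d_{jj}=0$ gives $\sum_{i\ne j}p_{ij}=\sum_i d_{ji}=\sum_l d_{jl}$, so these two $\O(\Delta t)$ terms cancel exactly and every column sum equals $1$. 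Hence $m_{jj}=1+\sum_{i\ne j}\abs{m_{ij}}$, i.e. the diagonal entry exceeds the sum of the absolute values of the off-diagonal entries in its column by precisely $1$, so the matrix is strictly diagonally dominant by columns. A strictly diagonally dominant Z-matrix with positive diagonal is a nonsingular M-matrix, whence its inverse is entrywise nonnegative and, in particular, has strictly positive diagonal entries.

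It remains to verify that the right-hand sides are positive and to assemble the induction. For $\delta=0$ the matrix $\mbfM^{(k)}$ is diagonal with positive entries and the right-hand side of \eqref{eq:MPRKMVstage} is $\mbfy^n$ plus a nonnegative combination of production terms evaluated at earlier, by hypothesis positive, stages, so $\mbfy^{(k)}>0$ follows at once. For $\delta=1$ the right-hand side of \eqref{eq:MPRKMVstage} reduces to $\mbfy^n>0$; applying the nonnegative inverse of the M-matrix $\mbfM^{(k)}$ and using that its diagonal is positive yields $y_i^{(k)}\ge \bigl((\mbfM^{(k)})^{-1}\bigr)_{ii}\,y_i^n>0$. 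The identical argument applied to \eqref{eq:MPRKMV}, whose right-hand side is $\mbfy^n>0$, gives $\mbfy^{n+1}>0$ and closes the induction.

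I expect the main obstacle to be precisely the column-sum cancellation: it is the only place where conservativity enters, and getting the bookkeeping right --- matching the destruction terms hidden in the diagonal against the production terms spread across the off-diagonal of the same column, including the vanishing of the diagonal production $p_{ii}$ --- is what upgrades the mere sign pattern into genuine diagonal dominance, and hence into the M-matrix property that forces positivity for every $\Delta t>0$. A secondary subtlety is that the sign pattern itself is only valid once the previous stages are known to be nonnegative, which is why the M-matrix argument must be interleaved with the induction on $k$ rather than applied once at the end.
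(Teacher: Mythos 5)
Your argument is correct and is essentially the standard proof: the paper itself gives no proof of this lemma (it defers to \cite{KopeczMeister2017}), and the argument there is precisely the one you reconstruct --- conservativity forces the column sums of $\mbfM^{(k)}$ and $\mbfM$ to equal $1$, so each is a column-diagonally-dominant Z-matrix, hence a nonsingular M-matrix with entrywise nonnegative inverse (this is also the content of Lemma~\ref{lem:mbound}), and applying this inverse to the positive right-hand side, stage by stage, yields strict positivity. The only cosmetic alternative is that, instead of invoking positivity of the diagonal of the inverse, one may note that every row of a nonnegative nonsingular matrix contains a positive entry, which against $\mbfy^n>0$ already gives $\mbfy^{(k)}>0$; both routes are valid.
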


Lemmas~\ref{lem:MPRKcons} and \ref{lem:MPRKpos} show that the MPRK schemes as defined in Definition~\ref{def:MPRKdefn} possess the desired properties of unconditional positivity and conservation. 
The only quantities left to choose are the PWDs $\sigma_i$ and $\pi_i^{(k)}$ for $i=1,\dots,N$ and $k=1,\dots,s$. 
In \cite{KopeczMeister2017} we introduced the second order MPRK22($a_{21}$) schemes, which use $\pi_i=y_i^n$ and $\sigma_i=y_i^n(y_i^{(2)}/y_i^n)^{1/a_{21}}$ for $i=1,\dots,N$.

The MPRK22(1) scheme is equivalent to the original MPRK scheme introduced in \cite{BDM2003}. This scheme and the first order modified Patankar-Euler scheme of \cite{BDM2003} have been successfully applied to solve physical, biogeochemical and ecosystem models (\cite{BDM2005,BBKMNU2006,BMZ2009,HenseBurchard2010,HenseBeckmann2010,MeisterBenz2010,WHK2013,SemeniukDastoor2017}), and have also proven beneficial in astrophysics \cite{KlarMuecket2010,Gressel2016}. 

In \cite{SchippmannBurchard2011} it was demonstrated that the MPRK22(1) scheme outperforms standard Runge-Kutta and Rosenbrock methods when solving biogeochemical models without multiple
source compounds per system reactions. 
The same was shown with respect to workload in \cite{BonaventuraDellaRocca2016}, where the Brusselator PDS was solved with different time integration schemes.

In \cite{BBKS2007,BRBM2008} second order schemes, which ensure conservation in a biochemical sense, were introduced. 
These schemes require the solution of a non-linear equation in each time step. 
Other schemes for the same purpose were recently presented in \cite{RadtkeBurchard2015}. 
These explicit schemes incorporate the MPRK schemes of \cite{BDM2003} to achieve multi-element conservation for stiff problems.
A potentially third order Patankar-type scheme was introduced in \cite{FormaggiaScotti2011}.
This method uses the MPRK22(1) scheme a as predictor and a modification of the BDF(3) multistep method as a corrector. 
It yields the MPRK22(1) approximation, whenever the positivity of the corrector approximation cannot be guaranteed.

Modified Patankar-Runge-Kutta type schemes are also used in the context of partial differential equations. An implicit first order Patankar-type scheme based on a third order SDIRK method was presented in \cite{MeisterOrtleb2014} and applied to the shallow water equations.
In \cite{OrtlebHundsdorfer2016} Patankar-type Runge-Kutta schemes for linear PDEs were investigated. 

In the present paper, we extend the work of \cite{KopeczMeister2017} to third order. We present necessary and sufficient conditions for third order three-stage MPRK schemes, and introduce two families of third order MPRK methods.
To our knowledge, this is the first time that third order Patankar-type schemes are presented.
 
The paper is organized as follows. Section~\ref{sec:MPRKorder} deals with the derivation of conditions for third order three-stage MPRK schemes. In this section also novel third order MPRK schemes are introduced. The test problems of Section~\ref{sec:testcases} are used in Section~\ref{sec:numres} to show numerical experiments with these novel schemes.
\section{Third order MPRK schemes}\label{sec:MPRKorder}
In this section we assume that all occurring PDS are positive. 
To prove convergence of the MPRK schemes we investigate the local truncation errors.
In doing so we make frequent use of the Landau symbol $\O$ and omit to specify the limit process $\Delta t\to 0$ each time.
As customary, we identify $y_i^n$ and $y_i(t^n)$ for $i=1,\dots,N$ when studying the truncation errors. 
Furthermore, since we are dealing with positive PDS we assume $y_i^n>0$ for $i=1,\dots,N$.

To derive necessary conditions that guarantee a certain order of an MPRK scheme, it suffices to consider specific PDS.
In this regard, the following family of PDS will be very helpful. 
Given parameters $I,J\in\{1,\dots,N\}$, $I\ne J$, $\mu>0$ and $\kappa\in\{1,2\}$,
we consider
\begin{subequations}\label{eq:PDSorder}
\begin{equation}
 \frac{dy_i}{dt}(t) = \PPDS_i(\mbfy(t)) - \DPDS_i(\mbfy(t)),\quad i=1,\dots,N,
\end{equation}
with
\begin{equation}
\PPDS_i(\mbfy) = \begin{cases}\mu y_I^\kappa, & i=J,\\ \phantom{y}0, &\text{otherwise},\end{cases}
\quad\text{and}\quad
\DPDS_i(\mbfy) = \begin{cases}\mu y_I^\kappa, & i=I,\\ \phantom{y}0, &\text{otherwise},\end{cases}
\end{equation}
\end{subequations}
 and initial values $y_i(0)=1$ for $i=1,\dots,N$.
The PDS can be written in the form
\begin{align*}
\frac{dy_I}{dt} =-\mu y_I^\kappa,\qquad
\frac{dy_J}{dt} =\mu y_I^\kappa,\qquad
\frac{dy_i}{dt} = 0, \quad i\in\{1,\dots,N\}\setminus\{I,J\}.
\end{align*}
For $\kappa=1$ the exact solution is given by
\begin{align*}
y_I(t) &= e^{-\mu t},\\
y_J(t) &= 2-e^{-\mu t},\\
y_i(t) &=1, \quad i\in\{1,\dots,N\}\setminus\{I,J\},
\end{align*}
and for $\kappa=2$ the exact solution is given by
\begin{align*}
y_I(t) &= (1+t\mu)^{-1},\\
y_J(t) &= 2-(1+t\mu)^{-1},\\
y_i(t) &=1, \quad i\in\{1,\dots,N\}\setminus\{I,J\}.
\end{align*}
This shows that the PDS is positive.
Writing
\[
\PPDS_i(\mbfy) = \sum_{j=1}^N \pPDS_{ij}(\mbfy),\quad \DPDS_i(\mbfy) = \sum_{j=1}^N \dPDS_{ij}(\mbfy),
\]
with
\[
\pPDS_{ij}(\mbfy)=\begin{cases}\mu y_I^\kappa,& i=J\text{ and }j=I,\\\phantom{y}0,&\text{otherwise},\end{cases}\quad
\dPDS_{ij}(\mbfy)=\begin{cases}\mu y_I^\kappa,& i=I\text{ and }j=J,\\\phantom{y}0,&\text{otherwise},\end{cases}
\]
we see that the PDS is also fully conservative.

The following lemmas are helpful to find necessary and sufficient conditions for the PWDs of third order MPRK schemes.
The first one ensures the boundedness of the MPRK approximations and the stage values.
\begin{lem}\label{lem:mbound}
 Let $\mbfM$, $\mbfM^{(k)}$ be given by \eqref{MPRKmatvec},
 \eqref{eq:MPRKmatvecstage} with $\delta = 1$ and $\mbfM^{-1}=(\widetilde m_{ij})$,  \mbox{$(\mbfM^{(k)})^{-1}=\widetilde m_{ij}^{(k)}$}. Then we have
 \[0\leq \widetilde m_{ij},\widetilde m_{ij}^{(k)}\leq 1,\quad i,j=1,\dots,N,\]
 for $k=1,\dots,s$.
 \end{lem}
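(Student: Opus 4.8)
The plan is to exploit two structural features of $\mbfM$ and $\mbfM^{(k)}$: their sign pattern and their column sums. From \eqref{MPRKmatvec}, and since $b_k\geq 0$ and $\sigma_j>0$, the matrix $\mbfM$ has strictly positive diagonal entries and nonpositive off-diagonal entries, i.e.\ it is a Z-matrix; the same holds for each $\mbfM^{(k)}$ by \eqref{eq:MPRKmatvecstage}, using $\delta=1$, $a_{k\nu}\geq 0$ and $\pi_j^{(k)}>0$. I would carry out the argument for $\mbfM$ in detail, the argument for each $\mbfM^{(k)}$ being identical.

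The decisive observation is that every column of $\mbfM$ sums to $1$. Indeed, fixing $j$ and summing the $j$-th column,
\[
\sum_{i=1}^N m_{ij} = 1 + \frac{\Delta t}{\sigma_j}\sum_{k=1}^s b_k\Bigl(\sum_{l=1}^N d_{jl}(\mbfy^{(k)}) - \sum_{\substack{i=1\\i\ne j}}^N p_{ij}(\mbfy^{(k)})\Bigr),
\]
and full conservativity ($p_{ij}=d_{ji}$ and $d_{jj}=0$) forces the bracketed term to vanish, so $\sum_{i=1}^N m_{ij}=1$ for every $j$; equivalently $\mathbf 1^T\mbfM=\mathbf 1^T$. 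This is exactly the algebraic identity underlying the conservation statement of Lemma~\ref{lem:MPRKcons}.

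From here the lower bound is a diagonal-dominance argument. Because the off-diagonal entries of column $j$ are nonpositive, the column-sum identity gives $m_{jj} = 1 + \sum_{i\ne j}\lvert m_{ij}\rvert$, so $\mbfM$ is strictly diagonally dominant by columns with positive diagonal. Hence $\mbfM$ is a nonsingular M-matrix and its inverse is entrywise nonnegative, which yields $\widetilde m_{ij}\geq 0$. For the upper bound I would transfer the column-sum property to the inverse: right-multiplying $\mathbf 1^T\mbfM=\mathbf 1^T$ by $\mbfM^{-1}$ gives $\mathbf 1^T\mbfM^{-1}=\mathbf 1^T$, so each column of $\mbfM^{-1}$ also sums to $1$. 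Combining nonnegativity with this column sum, $\widetilde m_{ij}\leq \sum_{l=1}^N \widetilde m_{lj}=1$, giving $0\leq\widetilde m_{ij}\leq 1$. Repeating the same steps for each $\mbfM^{(k)}$, with $\sigma_j$ replaced by $\pi_j^{(k)}$ and the sum $\sum_k b_k$ replaced by $\sum_\nu a_{k\nu}$, completes the proof.

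The only real content is the column-sum identity, and the main thing to get right there is the precise pairing of Patankar weights with matrix indices: the destruction term $d_{ij}$ carries the weight $1/\sigma_i$ tied to the row index and feeds the diagonal entry, whereas the production term $p_{ij}$ carries $1/\sigma_j$ tied to the column index and feeds the off-diagonal entries, so that within a single column the common factor $1/\sigma_j$ can be pulled out and the conservativity relation $p_{ij}=d_{ji}$ produces the cancellation. Everything else---the M-matrix implication and the upper bound---is then routine.
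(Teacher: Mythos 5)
Your proof is correct. The paper itself gives no argument for this lemma and simply defers to \cite{KopeczMeister2017}, where the proof runs along essentially the same lines you use: the column-sum identity $\mathbf 1^T\mbfM=\mathbf 1^T$ from conservativity, strict column diagonal dominance of the Z-matrix $\mbfM$ to get a nonnegative inverse, and the transferred column sums of $\mbfM^{-1}$ for the upper bound.
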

\begin{proof}
 See \cite{KopeczMeister2017}.
\end{proof}

The next lemma is useful to separate complicated conditions for the PWDs into simpler ones.
\begin{lem}\label{lem:lemO}
The identity
\begin{equation}\label{eq:lemOaux1}
 \xi_0(\Delta t) + \mu \xi_1(\Delta t) + \mu^2 \xi_2(\Delta t) + \dots+\mu^n \xi_n(\Delta t)=\O(\Delta t^s)\text{ for all }\mu>0
\end{equation}
is equivalent to 
\begin{equation}\label{eq:lemOaux2}
 \xi_i(\Delta t)=\O(\Delta t^s),\quad i=0,\dots,N.
\end{equation}

\end{lem}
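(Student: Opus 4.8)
The plan is to establish the two implications in \eqref{eq:lemOaux1} $\Leftrightarrow$ \eqref{eq:lemOaux2} separately. The direction \eqref{eq:lemOaux2} $\Rightarrow$ \eqref{eq:lemOaux1} is immediate: for any fixed $\mu>0$ the left-hand side of \eqref{eq:lemOaux1} is a finite linear combination of the $\xi_i(\Delta t)$ with the $\Delta t$-independent coefficients $\mu^i$, and a finite linear combination of terms that are each $\O(\Delta t^s)$ is again $\O(\Delta t^s)$.

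For the converse \eqref{eq:lemOaux1} $\Rightarrow$ \eqref{eq:lemOaux2}, I would regard $P(\mu,\Delta t)=\sum_{i=0}^{n}\mu^i\xi_i(\Delta t)$, for fixed $\Delta t$, as a polynomial of degree $n$ in $\mu$ whose coefficients are exactly the functions $\xi_i(\Delta t)$ to be controlled. First I would fix, once and for all, $n+1$ distinct positive nodes $\mu_0,\dots,\mu_n$. Evaluating $P$ at these nodes gives the linear system $(P(\mu_0,\Delta t),\dots,P(\mu_n,\Delta t))^T=V\boldsymbol\xi$, where $\boldsymbol\xi=(\xi_0(\Delta t),\dots,\xi_n(\Delta t))^T$ and $V=(\mu_k^i)_{k,i=0}^{n}$ is the Vandermonde matrix of the nodes. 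Since the nodes are distinct, $V$ is invertible and its inverse $V^{-1}$ is a fixed matrix independent of $\Delta t$. Hence $\boldsymbol\xi=V^{-1}(P(\mu_0,\Delta t),\dots,P(\mu_n,\Delta t))^T$, and reading off each component expresses $\xi_i(\Delta t)$ as a $\Delta t$-independent linear combination of the values $P(\mu_k,\Delta t)$. By the hypothesis \eqref{eq:lemOaux1}, each $P(\mu_k,\Delta t)=\O(\Delta t^s)$, so each $\xi_i(\Delta t)=\O(\Delta t^s)$, as required.

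The step I would flag as the main (if modest) obstacle is that the implied constant in \eqref{eq:lemOaux1} is permitted to depend on $\mu$, so one cannot simply differentiate in $\mu$ or pass to a limit $\mu\to0$ to isolate a single coefficient. The Vandermonde argument sidesteps this precisely because it invokes \eqref{eq:lemOaux1} only at the finitely many fixed nodes $\mu_0,\dots,\mu_n$: each supplies its own constant $C_k$ and threshold $\delta_k>0$ with $\abs{P(\mu_k,\Delta t)}\le C_k\abs{\Delta t}^s$ for $0<\Delta t<\delta_k$, and these finitely many bounds combine by taking maxima into a single uniform estimate $\abs{\xi_i(\Delta t)}\le C\abs{\Delta t}^s$ valid for all sufficiently small $\Delta t$. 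One could equally argue by induction on the degree $n$, using differences of \eqref{eq:lemOaux1} at two nodes to lower the degree, but the Vandermonde formulation makes the $\Delta t$-uniformity most transparent.
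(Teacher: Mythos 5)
Your proposal is correct and follows essentially the same route as the paper: evaluate the hypothesis at $n+1$ distinct positive nodes, invert the resulting Vandermonde system, and read off $\xi_i(\Delta t)=\O(\Delta t^s)$, with the converse being an immediate finite linear combination. Your additional remark about combining the finitely many $\mu$-dependent constants via maxima is a nice clarification the paper leaves implicit, but the argument is the same.
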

\begin{proof}
 Let $0<\mu_0<\mu_1<\dots<\mu_n$. Since \eqref{eq:lemOaux1} is valid for all $\mu>0$, we have
 \begin{equation*}
  \xi_0(\Delta t) + \mu_i \xi_1(\Delta t) + \mu_i^2 \xi_2(\Delta t) + \dots+\mu_i^n \xi_n(\Delta t)=\O(\Delta t^s)
 \end{equation*}
 for $i=0,\dots,n$. This can be rewritten as
 \begin{equation}\label{eq:lemOaux3}
\mathbf V\boldsymbol\xi(\Delta t)=\O(\Delta t^s),
\end{equation}
  with
  \begin{equation*}
   \mathbf V = \begin{pmatrix}1 & \mu_0 & \mu_0^2& \cdots & \mu_0^n\\\vdots &\vdots&\vdots&\ddots&\vdots\\1 & \mu_n & \mu_n^2& \cdots & \mu_n^n\end{pmatrix},\quad \boldsymbol \xi(\Delta t)=(\xi_0(\Delta t),\dots,\xi_n(\Delta t))^T.
  \end{equation*} 
Since all $\mu_i$ are distinct for $i=0,\dots,n$, $\mathbf V$ is a Vandermonde matrix, and hence, regular.
Multiplication of \eqref{eq:lemOaux3} with $\mathbf V^{-1}$ yields
\begin{equation*}
 \boldsymbol\xi(\Delta t)=\mathbf V^{-1}\O(\Delta t^s)=\O(\Delta t^s),
\end{equation*}
Hence, \eqref{eq:lemOaux2} is satisfied.
On the other hand, if \eqref{eq:lemOaux2} is satisfied, so is \eqref{eq:lemOaux1}.
\end{proof}

As a three-stage MPRK scheme is build on an explicit three-stage Runge-Kutta scheme with non-negative parameters, we must characterize these schemes in some way. It is well known, that an explicit three-stage Runge-Kutta scheme
 \begin{equation*}
  \begin{array}{c|ccc}
   0\\
   c_2 & a_{21}\\
   c_3 & a_{31} & a_{32}\\\hline
   & b_1 & b_2 & b_3
  \end{array}
 \end{equation*}
 is third order accurate, if the conditions 
 \begin{subequations}\label{eq:RKorder0}
 \begin{align}
  a_{21}&=c_{2},\label{eq:RKorder1}\\ 
  a_{31}+a_{32}&=c_3,\label{eq:RKorder2}\\
  b_1+b_2+b_3&=1,\label{eq:RKorder3}\\
  b_2c_2+b_3c_3&=1/2,\label{eq:RKorder4}\\ 
  b_2c_2^2+b_3c_3^2&=1/3,\label{eq:RKorder5}\\
  a_{21}a_{32}b_3&=1/6\label{eq:RKorder6}
 \end{align}
 \end{subequations}
are satisfied. 
The last condition particularly implies $a_{21},a_{32},b_3\ne 0$. 

The following lemma shows, that all explicit three-stage Runge-Kutta schemes of order three can be parameterized by families with at most two free parameters. 
Later we will use this lemma to characterize all explicit three-stage Runge-Kutta schemes of order three with non-negative parameters. 
\begin{lem}\label{lem:RK3}

 All explicit third order Runge-Kutta schemes can be parameterized with at most two parameters. The following three cases can occur:
 \paragraph{Case I:}
 \begin{equation*}
 \renewcommand{\arraystretch}{2}
 \setlength{\arraycolsep}{10pt}
  \begin{array}{c|ccc}
   0\\
   \alpha & \alpha\\
   \beta & \dfrac{3\alpha\beta(1-\alpha)-\beta^2}{\alpha(2-3\alpha)} & \dfrac{\beta(\beta-\alpha)}{\alpha(2-3\alpha)}\\\hline
   & 1+\dfrac{2-3(\alpha+\beta)}{6\alpha\beta} & \dfrac{3\beta-2}{6\alpha(\beta-\alpha)} & \dfrac{2-3\alpha}{6\beta(\beta-\alpha)}
  \end{array}
 \end{equation*}
with $\alpha,\beta\ne0$, $\alpha\ne\beta$, $\alpha\ne\frac23$.
\paragraph{Case II:} 
 \begin{equation*}
 \renewcommand{\arraystretch}{2}
 \setlength{\arraycolsep}{5pt}
  \begin{array}{c|ccc}
   0\\
   \dfrac23 & \dfrac23\\
  \dfrac23 & \dfrac23-\dfrac{1}{4\gamma} & \dfrac{1}{4\gamma}\\\hline
   & \dfrac14 & \dfrac34-\gamma & \gamma
  \end{array}
 \end{equation*}
 with $\gamma\ne 0$.
 \paragraph{Case III:} 
 \begin{equation*}
 \renewcommand{\arraystretch}{2}
 \setlength{\arraycolsep}{5pt}
  \begin{array}{c|ccc}
   0\\
   \dfrac23 & \dfrac23\\
   0 & -\dfrac{1}{4\gamma} & \dfrac{1}{4\gamma}\\\hline
   & \dfrac14-\gamma & \dfrac34 & \gamma
  \end{array}
 \end{equation*}
 with $\gamma\ne 0$.
\end{lem}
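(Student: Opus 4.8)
The plan is to read the six order conditions \eqref{eq:RKorder0} as a system for the eight unknowns $c_2,c_3,a_{21},a_{31},a_{32},b_1,b_2,b_3$ and to solve it explicitly. First I would use \eqref{eq:RKorder1} and \eqref{eq:RKorder2} to eliminate $a_{21}=c_2$ and $a_{31}=c_3-a_{32}$, which leaves the four conditions \eqref{eq:RKorder3}--\eqref{eq:RKorder6} in the six remaining unknowns $c_2,c_3,a_{32},b_1,b_2,b_3$. Since \eqref{eq:RKorder6} reads $c_2a_{32}b_3=1/6$, it forces $c_2\ne0$, $a_{32}\ne0$, $b_3\ne0$ throughout, as already noted after the order conditions. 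The two nodes $c_2,c_3$ will play the role of the at-most-two free parameters, and the other weights will be expressed through them.

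The heart of the argument is the pair \eqref{eq:RKorder4}, \eqref{eq:RKorder5}, which I read as a linear system for $(b_2,b_3)$,
\[
\begin{pmatrix} c_2 & c_3\\ c_2^2 & c_3^2\end{pmatrix}\begin{pmatrix}b_2\\b_3\end{pmatrix}=\begin{pmatrix}1/2\\1/3\end{pmatrix},
\]
whose coefficient determinant is $c_2c_3(c_3-c_2)$. I would then split into cases according to whether this determinant vanishes.

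If $c_3\ne0$ and $c_3\ne c_2$, so that the determinant is nonzero (using $c_2\ne0$), Cramer's rule determines $b_2,b_3$ uniquely in terms of $\alpha:=c_2$ and $\beta:=c_3$; afterwards $a_{32}$ follows from \eqref{eq:RKorder6} and $b_1$ from \eqref{eq:RKorder3}, and substituting $a_{31}=c_3-a_{32}$ and simplifying reproduces exactly the tableau of Case~I. The side condition $\alpha\ne2/3$ is forced here, since the explicit formula for $b_3$ carries the numerator $2-3\alpha$ while \eqref{eq:RKorder6} requires $b_3\ne0$. If instead the determinant vanishes, then since $c_2\ne0$ we must have $c_3=0$ or $c_3=c_2$. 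In either degenerate situation I would divide \eqref{eq:RKorder5} by \eqref{eq:RKorder4}: when $c_3=0$ this pins the surviving node to $c_2=2/3$ and gives $b_2=3/4$ with $b_3=:\gamma$ free, producing Case~III; when $c_3=c_2$ it pins $c_2=c_3=2/3$ and gives $b_2+b_3=3/4$ with $b_3=:\gamma$ free, producing Case~II. In both, $a_{32}$ again comes from \eqref{eq:RKorder6} and $a_{31}$ from $c_3-a_{32}$. Because $c_2\ne0$ makes the three node configurations ($\det\ne0$; $c_3=0$; $c_3=c_2$) exhaustive, every third order scheme lies in one of the three families, and a direct substitution confirms conversely that each stated tableau satisfies \eqref{eq:RKorder0}.

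The main obstacle is the treatment of the degenerate cases, where the $(b_2,b_3)$-system is singular: one must verify that \eqref{eq:RKorder4}, \eqref{eq:RKorder5} remain consistent at all, and the key observation is that forming their ratio forces the surviving node to equal $2/3$, after which exactly one weight becomes free. The nondegenerate Case~I is, by contrast, a routine Cramer's-rule computation followed by the algebraic simplifications needed to match the stated entries for $a_{31},a_{32},b_1,b_2,b_3$.
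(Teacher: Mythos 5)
Your proposal is correct and complete. Note, however, that the paper itself does not prove this lemma at all --- it simply cites \cite{SWP2012,RalstonRabinowitz2001} --- so there is no in-paper argument to compare against; what you give is the standard derivation found in those references. Your case split on the determinant $c_2c_3(c_3-c_2)$ of the $(b_2,b_3)$-system is exactly the right organizing principle: since \eqref{eq:RKorder6} forces $c_2=a_{21}\ne0$, the singular configurations are precisely $c_3=0$ and $c_3=c_2$, and in each of these the ratio of \eqref{eq:RKorder5} to \eqref{eq:RKorder4} (legitimate because $b_2c_2=1/2\ne0$, resp.\ $(b_2+b_3)c_2=1/2\ne0$) pins the surviving node to $2/3$, leaving one weight free --- yielding Cases II and III. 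In the regular case Cramer's rule gives $b_2=(3\beta-2)/(6\alpha(\beta-\alpha))$ and $b_3=(2-3\alpha)/(6\beta(\beta-\alpha))$, the condition $\alpha\ne2/3$ is correctly identified as forced by $b_3\ne0$, and the remaining entries $a_{32}=1/(6\alpha b_3)$, $a_{31}=\beta-a_{32}$, $b_1=1-b_2-b_3$ simplify to the stated tableau (the factorization $3(\beta^2-\alpha^2)-2(\beta-\alpha)=(\beta-\alpha)(3(\alpha+\beta)-2)$ handles $b_1$). The concluding remark that direct substitution verifies each tableau conversely satisfies \eqref{eq:RKorder0} closes the equivalence. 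No gaps.
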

\begin{proof}
 See \cite{SWP2012,RalstonRabinowitz2001}.
\end{proof}

To build an MPRK scheme based on an explicit third order three-stage Runge-Kutta scheme, we must ensure the non-negativity of the occurring parameters. The following lemma characterizes all such Runge-Kutta schemes.
\begin{lem}\label{lem:RK3pos}
 All explicit three-stage third order Runge-Kutta schemes with non-negative parameters can be represented by the following Butcher tableaus:
  \paragraph{Case I:} 
 \begin{equation*}
 \renewcommand{\arraystretch}{2}
 \setlength{\arraycolsep}{10pt}
  \begin{array}{c|ccc}
   0\\
   \alpha & \alpha\\
   \beta & \dfrac{3\alpha\beta(1-\alpha)-\beta^2}{\alpha(2-3\alpha)} & \dfrac{\beta(\beta-\alpha)}{\alpha(2-3\alpha)}\\\hline
   & 1+\dfrac{2-3(\alpha+\beta)}{6\alpha\beta} & \dfrac{3\beta-2}{6\alpha(\beta-\alpha)} & \dfrac{2-3\alpha}{6\beta(\beta-\alpha)}
  \end{array}
 \end{equation*}
with 
\begin{equation}\label{eq:RK3poscond}
\left.\begin{matrix}2/3\leq \beta\leq 3\alpha(1-\alpha)\\
3\alpha(1-\alpha)\leq \beta\leq 2/3\\
({3\alpha-2})/({6\alpha-3})\leq\beta\leq 2/3
\end{matrix}\right\}
\quad\text{for}\quad
\left\{\begin{matrix}
1/3\leq\alpha <\frac23,\\
2/3<\alpha<\alpha_0,\\
\alpha>\alpha_0,
\end{matrix}\right.
\end{equation}
and
\begin{equation*}
\alpha_0=\frac16\left(3+(3-2\sqrt 2)^{1/3}+(3+2\sqrt 2)^{1/3}\right)\approx 0.89255.
\end{equation*}
\paragraph{Case II:}
 \begin{equation*}
 \renewcommand{\arraystretch}{2}
 \setlength{\arraycolsep}{5pt}
  \begin{array}{c|ccc}
   0\\
   \dfrac23 & \dfrac23\\
  \dfrac23 & \dfrac23-\dfrac{1}{4\gamma} & \dfrac{1}{4\gamma}\\\hline
   & \dfrac14 & \dfrac34-\gamma & \gamma
  \end{array}
 \end{equation*}
 with \[\frac38\leq\gamma\leq \frac34.\]
\end{lem}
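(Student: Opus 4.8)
The plan is to start from the complete parameterization of third order three-stage schemes supplied by Lemma~\ref{lem:RK3} and, in each of its three cases, impose non-negativity on every entry of the matrix $\mbfA$ and on every weight $b_k$. I will repeatedly use that the abscissae satisfy $c_2=a_{21}$ and $c_3=a_{31}+a_{32}$, so non-negativity of the $\mbfA$-entries automatically forces $c_2,c_3\ge 0$. I expect Cases~II and~III to be short and Case~I to carry essentially all of the work. In Case~III the scheme has $a_{31}=-\frac{1}{4\gamma}$ and $a_{32}=\frac{1}{4\gamma}$, and these cannot both be non-negative (one needs $\gamma<0$, the other $\gamma>0$); hence Case~III admits no scheme with non-negative parameters and is discarded, which explains its absence from the statement. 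In Case~II every coefficient is affine in $\gamma$ or constant, so I read off the conditions directly: $a_{32}\ge 0$ and $b_3\ge 0$ give $\gamma>0$, $b_2\ge 0$ gives $\gamma\le\frac34$, and $a_{31}\ge 0$ gives $\frac{1}{4\gamma}\le\frac23$, i.e. $\gamma\ge\frac38$, while $a_{21}=\frac23$ and $b_1=\frac14$ are unconditionally non-negative. This yields exactly $\frac38\le\gamma\le\frac34$.

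For Case~I, first $a_{21}=\alpha\ge 0$ forces $\alpha>0$, and $c_3=a_{31}+a_{32}=\beta\ge 0$ forces $\beta>0$ (both are nonzero by hypothesis). The key observation is that the common denominator $\alpha(2-3\alpha)$ of $a_{31},a_{32}$ and the factor $\beta-\alpha$ in $b_2,b_3$ change sign, so I split according to $0<\alpha<\frac23$ and $\alpha>\frac23$ (the value $\frac23$ being excluded) and determine the sign of each coefficient. In each regime the sign conditions become bounds on $\beta$: $b_2\ge 0$ gives $\beta\ge\frac23$ respectively $\beta\le\frac23$; $a_{31}\ge 0$ gives $\beta\le 3\alpha(1-\alpha)$ respectively $\beta\ge 3\alpha(1-\alpha)$; the conditions $b_3\ge 0$ and $a_{32}\ge 0$ then hold automatically (using $\beta\ge\frac23>\alpha$ in the first regime and $\beta\le\frac23<\alpha$ in the second); and $b_1\ge 0$ is equivalent to $(6\alpha-3)\beta\ge 3\alpha-2$. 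For $0<\alpha<\frac23$ one checks, using that the numerator of $b_1$ is affine in $\beta$ and positive at both endpoints $\beta=\frac23$ and $\beta=3\alpha(1-\alpha)$, that the $b_1$ condition is implied by the others; non-emptiness of $[\frac23,\,3\alpha(1-\alpha)]$ then forces $3\alpha(1-\alpha)\ge\frac23$, i.e. $\alpha\ge\frac13$, giving the first line of \eqref{eq:RK3poscond}.

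The main obstacle is the regime $\alpha>\frac23$, where two lower bounds on $\beta$ compete: $\beta\ge 3\alpha(1-\alpha)$ from $a_{31}$ and $\beta\ge\frac{3\alpha-2}{6\alpha-3}$ from $b_1$ (here $6\alpha-3>0$), capped above by $\beta\le\frac23$. Deciding which lower bound binds amounts to the sign of the cubic
\begin{equation*}
h(\alpha)=-18\alpha^3+27\alpha^2-12\alpha+2,
\end{equation*}
which is precisely the numerator of $b_1$ evaluated at $\beta=3\alpha(1-\alpha)$. Since $h'(\alpha)=-6(3\alpha-1)(3\alpha-2)$, the function $h$ is increasing on $(\frac13,\frac23)$ with $h(\frac23)=\frac23>0$ and $h(1)=-1<0$, so $h$ has a unique root $\alpha_0\in(\frac23,1)$; solving the cubic by Cardano's formula produces the closed form for $\alpha_0$ stated in the lemma. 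For $\frac23<\alpha<\alpha_0$ we have $h>0$, so $3\alpha(1-\alpha)$ is binding and the admissible set is $3\alpha(1-\alpha)\le\beta\le\frac23$; for $\alpha>\alpha_0$ we have $h<0$, so $\frac{3\alpha-2}{6\alpha-3}$ is binding and the set is $\frac{3\alpha-2}{6\alpha-3}\le\beta\le\frac23$. Verifying that both intervals are non-empty (using $3\alpha(1-\alpha)\le\frac23$ and $\frac{3\alpha-2}{6\alpha-3}\le\frac23$ for $\alpha>\frac23$) completes the remaining two lines of \eqref{eq:RK3poscond}. I expect the only genuinely delicate points to be the careful sign bookkeeping of the rational coefficients across the two regimes and the identification of $\alpha_0$ with the relevant Cardano root of $h$.
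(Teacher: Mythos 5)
Your proposal is correct and takes essentially the same route as the paper: discard Case III, read off the admissible interval for $\gamma$ in Case II, and in Case I impose non-negativity coefficient by coefficient over the regimes $0<\alpha<\tfrac23$ and $\alpha>\tfrac23$ before merging the resulting bounds on $\beta$. Your explicit analysis of the cubic $h(\alpha)=-18\alpha^3+27\alpha^2-12\alpha+2$ just makes precise the merging step that the paper only sketches, where $\alpha_0$ is introduced as the unique solution of $3\alpha(1-\alpha)=(3\alpha-2)/(6\alpha-3)$ with reference to a figure.
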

\begin{proof}
 According to Lemma~\ref{lem:RK3} we have to distinguish between three different cases.
 
 In case III we have $b_3=\gamma$ and $a_{31}=-\frac1{4\gamma}$.
 Thus, $b_3>0$ implies $a_{31}<0$ and hence, negative Runge-Kutta parameters are inevitable in this case.
 
 In case II we must restrict $b_3=\gamma>0$, such that $a_{31}=\frac23-\frac1{4\gamma}\geq 0$ and $b_2=\frac{3}{4}-\gamma\geq 0$.
 This is the case for $\frac 38\leq \gamma\leq \frac34$.
 
 In case I things become more technical. 
 First, we need to ensure $a_{21}=\alpha>0$. 
 Since $\beta\ne0$ and $\beta\ne\alpha$, we must have $a_{32}={\beta(\beta-\alpha)}/{(\alpha(2-3\alpha))}>0$, and due to
 $\beta=a_{31}+a_{32}$, $\beta>0$ must hold as well.
 Next, we present conditions for the non-negativity of the remaining Runge-Kutta parameters subject to $\alpha,\beta>0$.
 From $a_{31}=(3\alpha\beta(1-\alpha)-\beta^2)/(\alpha(2-3\alpha))\geq 0$ we can conclude 
  \begin{align*}
 \left.\begin{matrix}
  0<\beta\leq 3\alpha(1-\alpha)\\
  \beta\geq 3\alpha(1-\alpha)\\
   \beta>0 
 \end{matrix}\right\}\quad \text{for}\quad \left\{\begin{matrix}
 0<\alpha<2/3,\\2/3<\alpha<1,\\\alpha\geq 1.
 \end{matrix}\right.
 \end{align*}
To ensure $a_{32}={\beta(\beta-\alpha)}/{(\alpha(2-3\alpha))}>0$, we must have
  \begin{align*}
 \left.\begin{matrix}
  \beta>\alpha\\
  0<\beta<\alpha 
 \end{matrix}\right\}\quad \text{for}\quad \left\{\begin{matrix}
 0<\alpha<2/3,\\\alpha> 2/3.
 \end{matrix}\right.
 \end{align*}
The requirement $b_1=1+(2-3(\alpha+\beta))/(6\alpha\beta)\geq 0$ demands
  \begin{align*}
 \left.\begin{matrix}
  0<\beta\leq (3\alpha-2)/(6\alpha-3)\\
  \beta>0\\
 \beta\geq (3\alpha-2)/(6\alpha-3) 
 \end{matrix}\right\}\quad \text{for}\quad \left\{\begin{matrix}
 0<\alpha<1/2,\\1/2\leq\alpha<2/3,\\\alpha> 2/3.
 \end{matrix}\right.
 \end{align*}
 To guarantee $b_2=(3\beta-2)/(6\alpha(\beta-\alpha))\geq 0$, the conditions
   \begin{align*}
 \left.\begin{matrix}
  0<\beta<\alpha\text{ or }\beta \geq 2/3\\
 0 < \beta\leq 2/3\text{ or }\beta > \alpha
 \end{matrix}\right\}\quad \text{for}\quad \left\{\begin{matrix}
 0<\alpha<2/3,\\\alpha> 2/3.
 \end{matrix}\right.
 \end{align*}
 are necessary. 
 Finally, $b_3=({2-3\alpha})/({6\beta(\beta-\alpha)})>0$ implies
    \begin{align*}
 \left.\begin{matrix}
  \beta>\alpha\\
 0 < \beta<\alpha
 \end{matrix}\right\}\quad \text{for}\quad \left\{\begin{matrix}
 0<\alpha<2/3,\\\alpha> 2/3.
 \end{matrix}\right.
 \end{align*}
 Merging the above conditions, we obtain \eqref{eq:RK3poscond}, in which $\alpha_0$ denotes the unique solution of $3\alpha(1-\alpha)=(3\alpha-2)/(6\alpha-3)$.
 The region of feasibility, which contains all pairs $(\alpha,\beta)$ that ensure non-negativity of the Runge-Kutta parameters, is shown in Figure~\ref{fig:RK3pos}.
 \begin{figure}
  \centering
  \includegraphics[width=.8\textwidth]{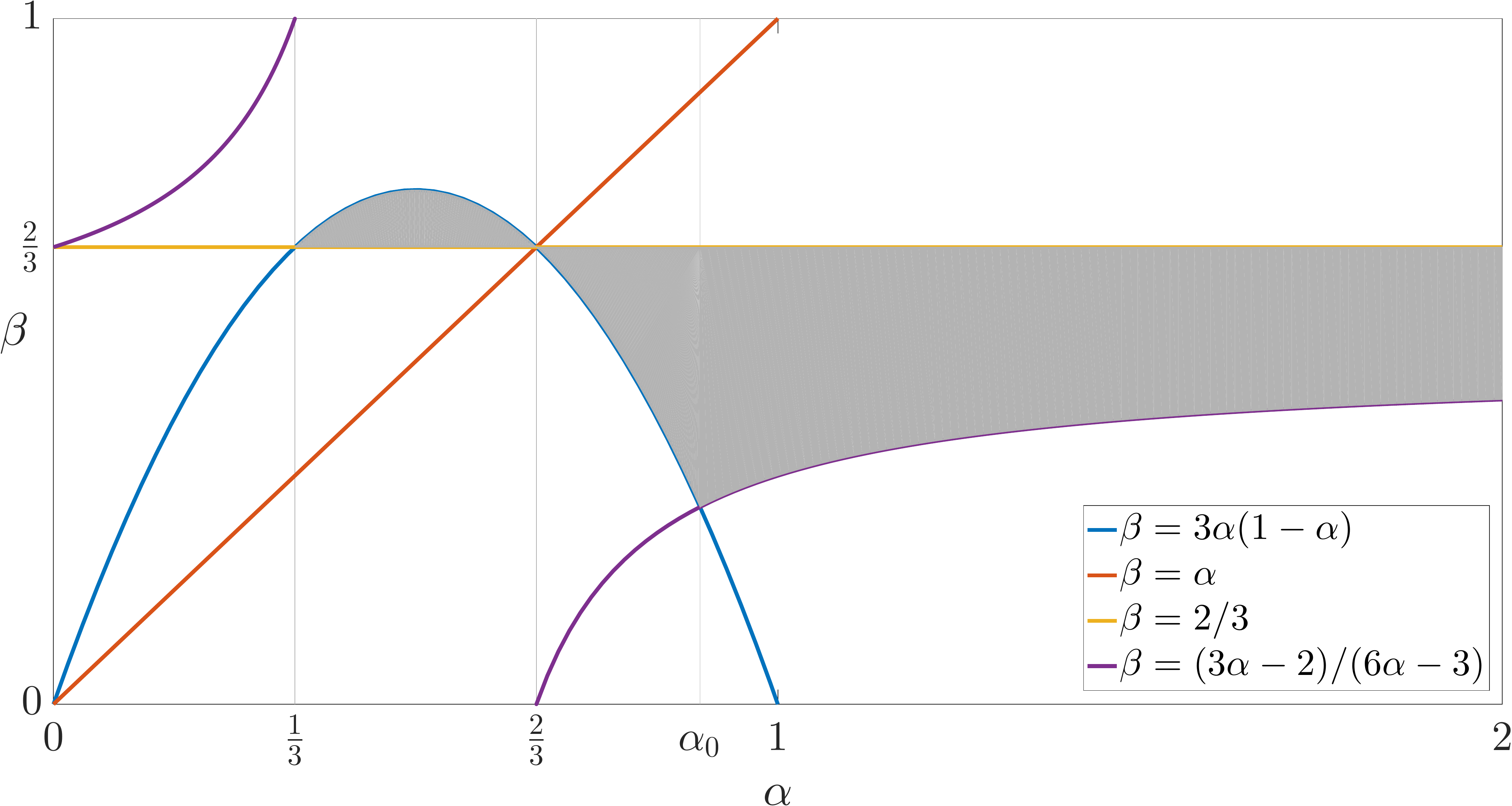}
  \caption{Feasible region (gray) that contains all pairs $(\alpha,\beta)$, for which the Runge-Kutta parameters in case I of Lemma~\ref{lem:RK3} are non-negative. The restriction $0\leq \alpha \leq 2$ was made only to facilitate visualization, the region is unbounded to the left.}
  \label{fig:RK3pos}
 \end{figure}
\end{proof}

To see that $\pi_i=y_i^n+\O(\Delta t)$ and $\rho_i=y_i^n+\O(\Delta t)$ for $i=1,\dots,N$ are necessary conditions for the PWDs of a third order MPRK scheme, the following lemma will be helpful.
\begin{lem}\label{lem:MPRK33sys}
Given an explicit three-stage Runge-Kutta scheme of order three with non-negative parameters,
 the nonlinear system
 \begin{subequations}\label{eq:limsys}
 \begin{align}\label{eq:limsys1}
  b_2a_{21}x+b_3(a_{31}+a_{32})y&=\frac12,\\
  b_2a_{21}^2 x^2+b_3(a_{31}+a_{32})^2 y^2&=\frac13,\label{eq:limsys2}\\
  xy&=1,\label{eq:limsys3}
  \end{align}
 \end{subequations}
has the unique positive solution 
\begin{equation}\label{eq:sol1}
x=y=1.
\end{equation}
\end{lem}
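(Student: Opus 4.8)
The plan is to check existence first and then concentrate entirely on uniqueness. That $x=y=1$ is a solution is immediate: \eqref{eq:limsys3} holds trivially, and since $a_{21}=c_2$ and $a_{31}+a_{32}=c_3$ by \eqref{eq:RKorder1} and \eqref{eq:RKorder2}, the equations \eqref{eq:limsys1} and \eqref{eq:limsys2} evaluated at $x=y=1$ become exactly the order conditions \eqref{eq:RKorder4} and \eqref{eq:RKorder5}. For uniqueness I would eliminate $y$ through \eqref{eq:limsys3}, substituting $y=1/x$ with $x>0$, and clear denominators in each equation. Using $b_2c_2+b_3c_3=\tfrac12$ and $b_2c_2^2+b_3c_3^2=\tfrac13$ on the right-hand sides, the two equations factor cleanly as $(x-1)\bigl(b_2c_2\,x-b_3c_3\bigr)=0$ and $(x^2-1)\bigl(b_2c_2^2\,x^2-b_3c_3^2\bigr)=0$. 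Note that \eqref{eq:RKorder6} forces $b_3,c_2\neq0$, and with $a_{31},a_{32}\ge0$ it gives $c_3=a_{31}+a_{32}>0$, so $b_3c_3>0$.

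Now suppose there is a positive solution with $x\neq1$. Since $x>0$ we also have $x^2\neq1$, so the two factorizations force $b_2c_2\,x=b_3c_3$ and $b_2c_2^2\,x^2=b_3c_3^2$. The first relation already requires $b_2c_2>0$ (otherwise $b_3c_3=0$). Squaring it gives $b_2^2c_2^2x^2=b_3^2c_3^2$, and dividing this by the second relation (both sides positive) yields at once $b_2=b_3$. Thus a nontrivial solution can only occur when $b_2=b_3$, and in that case the candidate root is $x=b_3c_3/(b_2c_2)=c_3/c_2$.

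It therefore remains to show that $b_2=b_3$ forces $c_2=c_3$, which collapses the candidate root to $x=1$ and completes the proof by contradiction; this is the step I expect to be the crux, since it is where the full order-three structure and non-negativity must interact. Writing $b:=b_2=b_3>0$, conditions \eqref{eq:RKorder4} and \eqref{eq:RKorder5} give $c_2+c_3=1/(2b)$ and $c_2^2+c_3^2=1/(3b)$, hence $(c_2-c_3)^2=2/(3b)-1/(4b^2)=(8b-3)/(12b^2)$ and $c_2c_3=(3-4b)/(24b^2)$. Because $c_2,c_3$ are real, $(c_2-c_3)^2\ge0$ forces $b\ge\tfrac38$. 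On the other hand, \eqref{eq:RKorder6} reads $c_2a_{32}b=\tfrac16$, and $a_{31}\ge0$ gives $a_{32}\le c_3$, so $\tfrac16=c_2a_{32}b\le c_2c_3b=(3-4b)/(24b)$, which rearranges to $b\le\tfrac38$. Consequently $b=\tfrac38$, whence $(c_2-c_3)^2=0$ and $c_2=c_3$. This contradicts $x=c_3/c_2\neq1$, so no solution with $x\neq1$ exists and $x=y=1$ is the unique positive solution. I would highlight that this argument is self-contained from the order conditions and non-negativity and does not require the explicit case distinction of Lemma~\ref{lem:RK3pos}.
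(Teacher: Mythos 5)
Your proof is correct, and while the first half runs parallel to the paper's argument, the second half takes a genuinely different and more self-contained route. Up to the reduction, the two proofs do essentially the same algebra in different packaging: the paper intersects the line \eqref{eq:limsys1} and the ellipse \eqref{eq:limsys2} separately with the hyperbola \eqref{eq:limsys3} and equates the two resulting expressions for $y$ to conclude that a second positive solution can exist only if $b_2=b_3$; your substitution $y=1/x$ and the factorizations $(x-1)(b_2c_2x-b_3c_3)=0$, $(x^2-1)(b_2c_2^2x^2-b_3c_3^2)=0$ reach the same conclusion (and absorb the paper's separate $b_2=0$ case, since $b_2c_2x=b_3c_3>0$ rules it out at once). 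The divergence is in disposing of the case $b_2=b_3$. The paper invokes the explicit two-parameter classification of Lemma~\ref{lem:RK3pos}: in case I it shows $b_2=b_3$ places $(\alpha,\beta)$ on a circle whose only contact with the feasibility region is the excluded value $\alpha=2/3$, and in case II it forces $\gamma=3/8$ with $c_2=c_3=2/3$. You instead work directly from the order conditions and non-negativity: writing $b=b_2=b_3$, the reality constraint $(c_2-c_3)^2=(8b-3)/(12b^2)\geq 0$ gives $b\geq 3/8$, while \eqref{eq:RKorder6} together with $a_{32}\leq c_3$ (which uses $a_{31}\geq 0$) gives $\tfrac16\leq bc_2c_3=(3-4b)/(24b)$, i.e.\ $b\leq 3/8$; hence $b=3/8$, $c_2=c_3$, and the candidate root $x=c_3/c_2$ collapses to $1$. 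Your argument is shorter, avoids the geometric discussion of the feasible region and the circle--parabola intersection, and does not depend on the parameterization of Lemma~\ref{lem:RK3pos} at all; what the paper's route buys in exchange is an explicit picture of exactly which schemes satisfy $b_2=b_3$, which is of some independent interest but is not needed for the lemma. All the positivity facts you use ($b_3,c_2,a_{32}>0$ from \eqref{eq:RKorder6}, $c_3>0$ from $a_{31}\geq0$) are correctly justified, so I see no gap.
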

\begin{proof}

First, we note that \eqref{eq:sol1} is a solution of \eqref{eq:limsys}, owing to \eqref{eq:RKorder0}.

Next, we show that no other solutions exist. 
If $b_2= 0$, \eqref{eq:RKorder4} becomes $b_3(a_{31}+a_{32})=1/2$ and hence \eqref{eq:limsys1} reads $y/2=1/2$, which implies $y=1$. 
Similar, owing to \eqref{eq:RKorder5}, $y=1$ is the only positive solution of \eqref{eq:limsys2}, hence, we can conclude $x=1$ from \eqref{eq:limsys3}.
Thus, \eqref{eq:sol1} is the only solution of \eqref{eq:limsys}, if $b_2=0$.

From now on, we assume $b_2\ne0$.
As $b_3(a_{31}+a_{32})\ne 0$ as well, since $a_{32}>0$, $b_3>0$ and $a_{31}\geq 0$, \eqref{eq:limsys1} represents a line and \eqref{eq:limsys2} represents an ellipse in the $x$-$y$-plane.
There are at most two intersections of the line and the ellipse, and thus,  the system \eqref{eq:limsys} has at most two solutions.
We already know that one of them is \eqref{eq:sol1}.
To find the hypothetical other one, we assume $y\ne 1$ and compute the intersection of the line \eqref{eq:limsys1} and the hyperbola \eqref{eq:limsys3}. 
Subtraction of \eqref{eq:RKorder4} from \eqref{eq:limsys1} yields
\begin{equation*}
 b_2 a_{21} (x-1)+b_3(a_{31}+a_{32})(y-1)=0,
\end{equation*}
which becomes
\begin{equation*}
 b_2 a_{21} \frac{1-y}{y}+b_3(a_{31}+a_{32})(y-1)=0,
\end{equation*}
owing to \eqref{eq:limsys3}.
Division by $1-y\ne 0$ results in
\begin{equation*}
 b_2 a_{21} \frac{1}{y}-b_3(a_{31}+a_{32})=0,
\end{equation*}
and thus, we have
\begin{equation}\label{eq:lem1aux1}
 y = \frac{b_2 a_{21}}{b_3(a_{31}+a_{32})}.
\end{equation}
Next, we compute the intersection of the ellipse \eqref{eq:limsys2} and the hyperbola \eqref{eq:limsys3}.
We obtain 
\begin{equation*}
 b_2 a_{21}^2 (x^2-1)+b_3(a_{31}+a_{32})^2(y^2-1)=0,
\end{equation*}
by subtracting \eqref{eq:RKorder5} from \eqref{eq:limsys2}, and
utilization of \eqref{eq:limsys3} yields
\begin{equation*}
 b_2 a_{21}^2 \frac{1-y^2}{y^2}+b_3(a_{31}+a_{32})^2(y^2-1)=0.
\end{equation*}
Owing to $1-y^2\ne0$, as $0<y\ne 1$, we can divide by $1-y^2$ and find
\begin{equation}\label{eq:lem1aux2}
 y=\frac{\sqrt{b_2} a_{21}}{\sqrt{b_3}(a_{31}+a_{32})}.
\end{equation}
Altogether, owing to \eqref{eq:lem1aux1} and \eqref{eq:lem1aux2}, only $b_2=b_3$ yields a potential second solution of \eqref{eq:limsys}.
This solution reads
\begin{equation}\label{eq:sol2}
 x = \frac{a_{31}+a_{32}}{a_{21}},\quad y=\frac{a_{21}}{a_{31}+a_{32}}.
\end{equation}
The remaining question is, if there are any explicit third order Runge-Kutta schemes with non-negative parameters that satisfy $b_2=b_3$.
According to Lemma~\ref{lem:RK3pos}, we have to consider two cases to answer this question. In case I, $b_2=b_3$ can be written as
\[\frac{3\beta-2}{6\alpha(\beta-\alpha)}=\frac{2-3\alpha}{6\beta(\beta-\alpha)}.\]
This is satisfied if 
\[
3\beta^2-2\beta+3\alpha^2-2\alpha=0,
\]
which can be reformulated as 
\begin{equation}\label{eq:circle}
\left(\alpha-\frac13\right)^2+\left(\beta-\frac13\right)^2=\frac29
\end{equation}
holds true.
Thus, $(\alpha,\beta)$ must be a point on the boundary of the circle with center $(1/3,1/3)$ and radius $\sqrt2/3$. 
 \begin{figure}
  \centering
  \includegraphics[width=.8\textwidth]{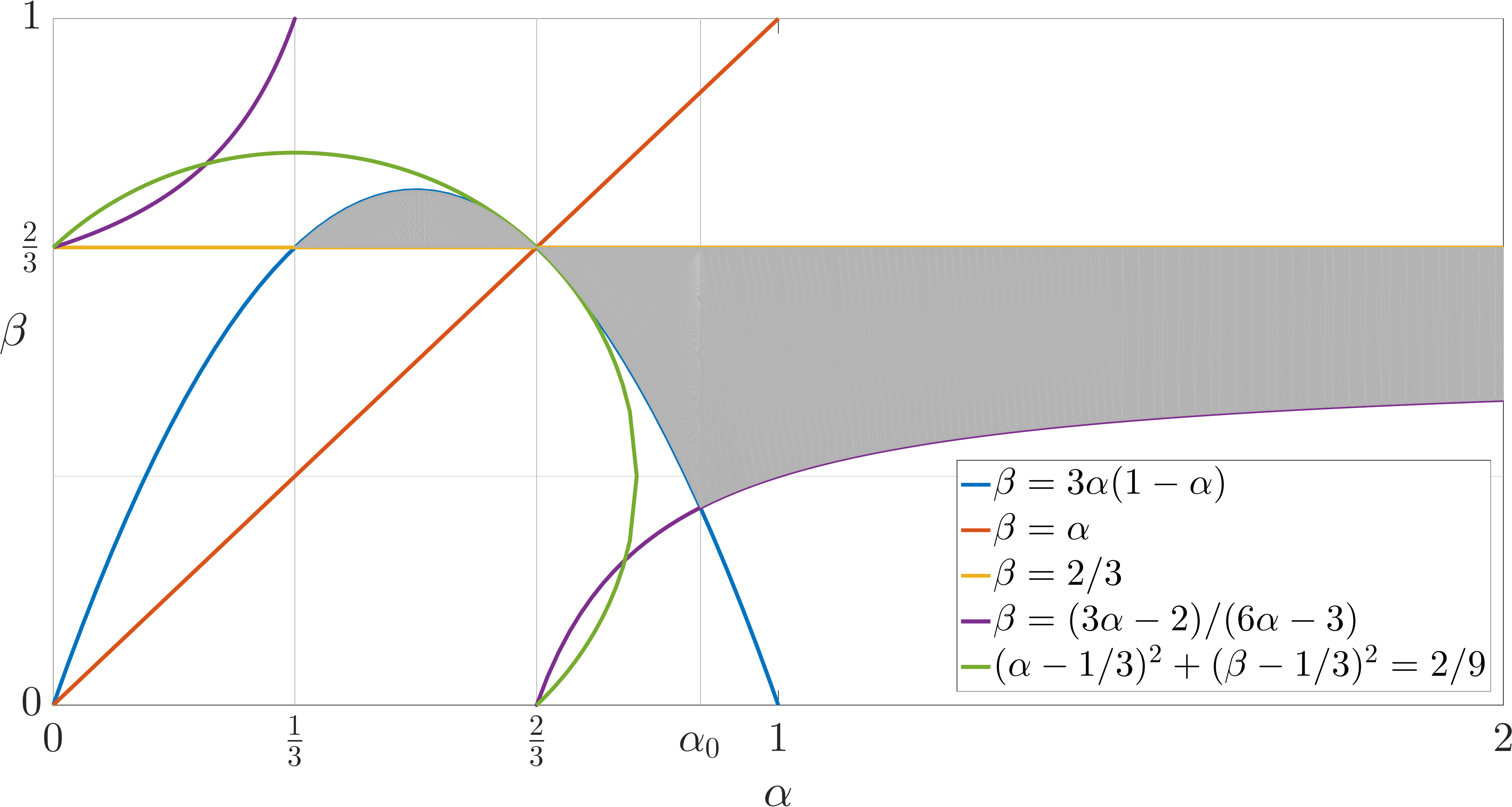}
  \caption{Feasible region (gray) that contains all pairs $(\alpha,\beta)$, for which the Runge-Kutta parameters in case I of Lemma~\ref{lem:RK3} are non-negative and the circle $(\alpha-1/3)^2+(\beta-1/3)^2=2/9$, whose boundary points satisfy $b_2=b_3.$}
  \label{fig:RK3pos2}
 \end{figure}
Figure~\ref{fig:RK3pos2} shows the feasible region from Lemma~\ref{lem:RK3pos}, together with the circle \eqref{eq:circle}.
Computing the intersection of the circle \eqref{eq:circle} and the parabola $3\alpha(1-\alpha)$, yields $\alpha=2/3$. 
As this value of $\alpha$ is excluded in case I, there is no solution of the system \eqref{eq:limsys} in the situation of case I.

In case II of Lemma~\ref{lem:RK3pos}, $b_2=b_3$ is equivalent to $3/4-\gamma=\gamma$, which is satisfied for $\gamma=3/8$. 
Due to $a_{21}=a_{31}+a_{32}=2/3$, \eqref{eq:sol2} becomes \eqref{eq:sol1}. 
All things considered, we have shown that \eqref{eq:sol1} is the unique positive solution of \eqref{eq:limsys}.
\end{proof}
An MPRK scheme \eqref{eq:MPRK} with three stages is given by
\begin{subequations}\label{eq:MPRK33}
\begin{align}
  &\begin{aligned}
    \mathllap{y_i^{(1)}} &= y_i^n,
  \end{aligned}\\ \
  &\begin{aligned} \label{eq:MPRK33c2}
    \mathllap{y_i^{(2)}} &= y_i^n + a_{21}\Delta t\sum_{j=1}^N\biggl( p_{ij}(\mbfy^{(1)})(1-\delta)+p_{ij}(\mbfy^{(1)})\frac{y_j^{(2)}}{\pi_j}\delta-d_{ij}(\mbfy^{(1)})\frac{y_i^{(2)}}{\pi_i}\biggr),
  \end{aligned}\\ 
  &\begin{multlined}[b][.7\columnwidth] \label{eq:MPRK33c3}
    \mathllap{y_i^{(3)}} = y_i^n + \Delta t\sum_{j=1}^N\biggl(\left(a_{31} p_{ij}(\mbfy^{(1)})+a_{32}p_{ij}(\mbfy^{(2)})\right)(1-\delta)\\
    +\left(a_{31} p_{ij}(\mbfy^{(1)})+a_{32} p_{ij}(\mbfy^{(2)})\right)\delta\frac{y_j^{(3)}}{\rho_j}\\
         - \left(a_{31} d_{ij}(\mbfy^{(1)})+ a_{32} d_{ij}(\mbfy^{(2)})\right)\frac{y_i^{(3)}}{\rho_i}\biggr),
  \end{multlined}  \\
  &\begin{multlined}[b][.7\columnwidth] \label{eq:MPRK33cnp1}
    \mathllap{y_i^{n+1}} = y_i^n + \Delta t\sum_{j=1}^N\biggl( \left(b_1 p_{ij}(\mbfy^{(1)})+b_2 p_{ij}(\mbfy^{(2)})+b_3 p_{ij}(\mbfy^{(3)})\right)\frac{y_j^{n+1}}{\sigma_j}\\
         - \left(b_1 d_{ij}(\mbfy^{(1)})+ b_2 d_{ij}(\mbfy^{(2)})+ b_3 d_{ij}(\mbfy^{(3)})\right)\frac{y_i^{n+1}}{\sigma_i}\biggr),
  \end{multlined}  
\end{align}
\end{subequations}
for $i=1,\dots,N$.
The next theorem gives necessary and sufficient conditions for the Patankar-weights of a third order three stage MPRK scheme.
\begin{thm}\label{thm:MPRK33order}
Given an explicit three-stage third order Runge-Kutta scheme with non-negative weights,
 the MPRK scheme \eqref{eq:MPRK33} is of third order, if and only if the conditions
 \begin{subequations}\label{eq:MPRK33order}
 \begin{align}\label{eq:MPRK33order1}
  \pi_i&=y_i^n+\O(\Delta t),\quad i=1,\dots,N,\\ \label{eq:MPRK33order2}
  \rho_i&=y_i^n+\O(\Delta t),\quad i=1,\dots,N,
  \end{align} 
\begin{multline}
 \label{eq:MPRK33order3}
 b_2 a_{21}\frac{y_i^n+a_{21}\Delta t(P_i^n-D_i^n)}{\pi_i}+b_3(a_{31}+a_{32})\frac{y_i^n+(a_{31}+a_{32})\Delta t(P_i^n-D_i^n)}{\rho_i}\\=\frac12+\O(\Delta t^2),\quad i=1,\dots,N,
 \end{multline}
 \begin{align}
 \SwapAboveDisplaySkip
 \label{eq:MPRK33order4}
 \sigma_i =y_i^n+\Delta t(P_i^n-D_i^n) + \frac{\Delta t^2}{2}\frac{\partial(P_i^n-D_i^n)}{\partial\mbfy }(\mbfP^n-\mbfD^n)+\O(\Delta t^3),\quad i=1,\dots,N,
\end{align}
\end{subequations}
are satisfied.
\end{thm}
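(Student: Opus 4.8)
The plan is to prove both implications by comparing, coefficient by coefficient in $\Delta t$, the expansion of the MPRK approximation $\mbfy^{n+1}$ with that of the exact solution. Since \eqref{eq:MPRK33} is a one-step method evaluated at $\mbfy^n=\mbfy(t^n)$, third order is equivalent to the local error $y_i^{n+1}-y_i(t^{n+1})=\O(\Delta t^4)$; hence the coefficients of $\Delta t^0,\dots,\Delta t^3$ in $y_i^{n+1}$ must reproduce the Taylor coefficients of $y_i(t^{n+1})$, namely $y_i^n$, $P_i^n-D_i^n$, $\tfrac12\frac{\partial(P_i^n-D_i^n)}{\partial\mbfy}(\mbfP^n-\mbfD^n)$, and the third-order term $\tfrac16\left.\frac{d^3y_i}{dt^3}\right|_{t^n}$. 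Throughout, positivity of the denominators together with Lemma~\ref{lem:mbound} (entries of $\mbfM^{-1}$ and $(\mbfM^{(k)})^{-1}$ lie in $[0,1]$) guarantees that the stages and $\mbfy^{n+1}$ stay bounded and admit the power-series expansions used below.

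For necessity I would apply the scheme to the one-parameter test family \eqref{eq:PDSorder}, in which only components $I$ and $J$ evolve and the only nonzero rate is the monomial $\mu y_I^\kappa$. Writing out the stage equations \eqref{eq:MPRK33c2}--\eqref{eq:MPRK33cnp1}, expanding the denominators $\pi_i,\rho_i,\sigma_i$ and the Patankar ratios in powers of $\Delta t$, and imposing $y_i^{n+1}-y_i(t^{n+1})=\O(\Delta t^4)$, yields identities that are polynomial in $\mu$ and must hold for every $\mu>0$. Lemma~\ref{lem:lemO} then lets me equate these coefficientwise in $\mu$, and running the argument for both $\kappa=1$ and $\kappa=2$ isolates the linear part of the dynamics from the quadratic (derivative) part. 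The leading-order balance of the two stage contributions is exactly the nonlinear system \eqref{eq:limsys}, with $x,y$ the $\Delta t\to0$ limits of the PWD ratios $y_i^n/\pi_i$ and $y_i^n/\rho_i$ and the third relation $xy=1$ coming from the conservative production--destruction pairing; Lemma~\ref{lem:MPRK33sys} forces $x=y=1$, which is precisely \eqref{eq:MPRK33order1}--\eqref{eq:MPRK33order2}. Pushing the expansion one order further produces the $\O(\Delta t^2)$ second-order balance \eqref{eq:MPRK33order3} and, from the final step, the requirement \eqref{eq:MPRK33order4} that $\sigma_i$ reproduce the first three Taylor coefficients of $y_i(t^{n+1})$.

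For sufficiency I would assume \eqref{eq:MPRK33order1}--\eqref{eq:MPRK33order4} and expand the scheme for a general positive, fully conservative PDS using the matrix form \eqref{eq:MPRKMVs}. First I would expand the stages: writing $\mbfM^{(k)}=\mbfI+\Delta t\mbfB^{(k)}$ and inverting by a Neumann series, conditions \eqref{eq:MPRK33order1}--\eqref{eq:MPRK33order2} fix the stages to leading order while \eqref{eq:MPRK33order3} fixes their $\Delta t^2$ structure. Substituting the chain-rule expansions of $p_{ij}(\mbfy^{(k)})$ and $d_{ij}(\mbfy^{(k)})$ about $\mbfy^n$ into the final step $\mbfM\mbfy^{n+1}=\mbfy^n$, expanding $\mbfM^{-1}=\mbfI-\Delta t\mbfB+\Delta t^2\mbfB^2-\dots$, and using \eqref{eq:MPRK33order4} for the $\sigma_i$-dependence, I would collect the coefficients of $\Delta t^0,\dots,\Delta t^3$. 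Matching them against the exact Taylor coefficients brings in exactly the classical Runge--Kutta order conditions \eqref{eq:RKorder0}, in particular $b_2c_2+b_3c_3=\tfrac12$, $b_2c_2^2+b_3c_3^2=\tfrac13$ and $a_{21}a_{32}b_3=\tfrac16$, which hold by hypothesis; this shows the local error is $\O(\Delta t^4)$.

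The main obstacle is the bookkeeping of the $\Delta t^3$ terms under the Patankar weighting. Because each $\sigma_i$ may depend on the stage values and the Neumann expansion of $\mbfM^{-1}$ mixes $\mbfB$, $\mbfB^2$ and $\mbfB^3$, the cubic coefficient of $y_i^{n+1}$ contains both the genuine Runge--Kutta contributions and spurious Patankar cross-terms. The content of the theorem is that \eqref{eq:MPRK33order3} and \eqref{eq:MPRK33order4} are precisely what cancels the latter against the former: the second-derivative data $\frac{\partial(P_i^n-D_i^n)}{\partial\mbfy}(\mbfP^n-\mbfD^n)$ must enter only through $\sigma_i$, while the remaining nonlinear term closes using $a_{21}a_{32}b_3=\tfrac16$. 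Carrying out this cancellation, and keeping careful track of which denominator contributes at which order, is the delicate computation underlying both directions.
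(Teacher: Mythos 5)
Your plan follows the paper's proof in all essentials: necessity via the test family \eqref{eq:PDSorder} with $\kappa=1,2$, coefficient matching in $\mu$ through Lemma~\ref{lem:lemO}, and Lemma~\ref{lem:MPRK33sys} to pin the limiting Patankar ratios to $1$; sufficiency via boundedness from Lemma~\ref{lem:mbound} and order-by-order expansion closed by the classical conditions \eqref{eq:RKorder0} (your Neumann-series packaging of $\mbfM^{-1}$ is only a cosmetic reorganization of the paper's direct bootstrapping of $y_i^{(k)}/\pi_i$, $y_i^{n+1}/\sigma_i$). One small correction: the relation $xy=1$ in \eqref{eq:limsys3} does not come from the conservative production--destruction pairing but from the $\mu^2$ coefficient of the $\kappa=1$ expansion (the $a_{21}a_{32}b_3=1/6$ cross term in the composed stages), which your own coefficientwise-in-$\mu$ procedure would in fact deliver.
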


\begin{proof}
We use the notation $\phi^*$ to represent $\phi(\mbfy^*)$ for a given function $\phi$.
As \eqref{eq:MPRK33} is an MPRK scheme, all Patankar-weights are positive, i.\,e. $\pi_i>0$, $\rho_i>0$ and $\sigma_i>0$ for $i=1,\dots,N$. 

The Runge-Kutta scheme is of third order and substitution of  \eqref{eq:RKorder1} and \eqref{eq:RKorder2} into \eqref{eq:RKorder4} and \eqref{eq:RKorder5} shows
 \begin{subequations}\label{eq:RK3cond00}
 \begin{align}\label{eq:RK3condA}
 b_2 a_{21}+b_3(a_{31}+a_{32})&=\frac12,\\
 b_2 a_{21}^2+b_3(a_{31}+a_{32})^2&=\frac13\label{eq:RK3condB}
 \end{align}
 \end{subequations}
hold true.
Furthermore, equation \eqref{eq:RKorder6} ensures $a_{21},a_{31},b_3>0$.

For a sufficiently smooth function $\phi$ and some $y_i^*=y_i^n+\O(\Delta t)$, we can expand $\phi(\mbfy^*)$ in the form
\begin{equation}\label{eq:MPRK33aux01}
\phi(\mbfy^*)=\phi^n+\frac{\partial\phi^n}{\partial\mbfy}(\mbfy^{*}-\mbfy^n)+\frac12(\mbfy^{*}-\mbfy^n)^T\mbfH_\phi^n(\mbfy^{*}-\mbfy^n)+\O(\Delta t^3),
\end{equation}
in which $\mbfH_{\phi}^n$ denotes the Hessian matrix of $\phi$ evaluated at $\mbfy^n$.
The Taylor expansion of the exact solution of \eqref{eq:pds} reads
\begin{multline}\label{eq:MPRK33exact}
y_i(t^{n+1})=y_i^n+\Delta t(P_i^n-D_i^n)+\frac{\Delta t^2}2\frac{\partial(P_i^n-D_i^n)}{\partial\mbfy}(\mbfP^n-\mbfD^n)\\+ \frac{\Delta t^3}{6}\sum_{k=1}^N\frac{\partial(P_i^n-D_i^n)}{\partial y_k}\frac{\partial(P_k^n-D_k^n)}{\partial \mbfy}(\mbfP^n-\mbfD^n)\\
+\frac{\Delta t^3}{6}(\mbfP^n-\mbfD^n)^T\mbfH_{P_i-D_i}^n(\mbfP^n-\mbfD^n)+\O(\Delta t^4)
\end{multline}
for $i=1,\dots,N$.

To derive necessary conditions, which allow for third order accuracy, we assume that the MPRK scheme \eqref{eq:MPRK33} is third order accurate, this is
\begin{multline}\label{eq:MPRK33numexp}
y_i^{n+1}=y_i^n+\Delta t(P_i^n-D_i^n)+\frac{\Delta t^2}2\frac{\partial(P_i^n-D_i^n)}{\partial\mbfy}(\mbfP^n-\mbfD^n)\\+ \frac{\Delta t^3}{6}\sum_{k=1}^N\frac{\partial(P_i^n-D_i^n)}{\partial y_k}\frac{\partial(P_k^n-D_k^n)}{\partial \mbfy}(\mbfP^n-\mbfD^n)\\
+\frac{\Delta t^3}{6}(\mbfP^n-\mbfD^n)^T\mbfH_{P_i-D_i}^n(\mbfP^n-\mbfD^n)+\O(\Delta t^4)
\end{multline}
for $i=1,\dots,N$. 
Subtracting \eqref{eq:MPRK33exact} from \eqref{eq:MPRK33numexp} shows
$y_i^{n+1}-y(t_i^{n+1})=\O(\Delta t^4)$.
Utilizing \eqref{eq:MPRK33cnp1} and \eqref{eq:MPRK33exact} this can be written as
\begin{multline}\label{eq:MPRK33aux1}
\sum_{j=1}^N\biggl( \left(b_1 p_{ij}^n+b_2 p_{ij}^{(2)}+b_3 p_{ij}^{(3)}\right)\frac{y_j^{n+1}}{\sigma_j}
         - \left(b_1 d_{ij}^n+ b_2 d_{ij}^{(2)}+ b_3 d_{ij}^{(3)}\right)\frac{y_i^{n+1}}{\sigma_i}\biggr)
-(P_i^n-D_i^n)\\-\frac{\Delta t}2\frac{\partial(P_i^n-D_i^n)}{\partial\mbfy}(\mbfP^n-\mbfD^n)- \frac{\Delta t^2}{6}\sum_{k=1}^N\frac{\partial(P_i^n-D_i^n)}{\partial y_k}\frac{\partial(P_k^n-D_k^n)}{\partial \mbfy}(\mbfP^n-\mbfD^n)\\
-\frac{\Delta t^2}{6}(\mbfP^n-\mbfD^n)^T\mbfH_{P_i-D_i}^n(\mbfP^n-\mbfD^n)
=\O(\Delta t^3)
\end{multline}
for $i=1,\dots,N$.

From now on, we focus on the solution of the PDS \eqref{eq:PDSorder} with
$I,J\in\{1,\dots,N\}$, $I\ne J$, $\kappa\in\{1,2\}$ and $\mu>0$.
Since $\PPDS_I=0$ and $\DPDS_I=\dPDS_{IJ}=\mu y_I^\kappa$, it follows that $\partial D_I/\partial\mbfy=(\partial D_I/\partial y_I) \mathbf e_I^T=\mu \kappa y_I^{\kappa-1}\mathbf e_I^T$, with $\mathbf e_I$ denoting the $I$th unit column vector and $\mbfH_{D_I}=(\partial^2\DPDS_{I}^n/{\partial y_I^2})\mathbf e_I\mathbf e_I^T=\mu \kappa(\kappa-1) y_I^{\kappa-2}\mathbf e_I\mathbf e_I^T$.
Hence, \eqref{eq:MPRK33aux1} with $i=I$ becomes
\begin{multline}\label{eq:MPRK33aux15}
- \left(b_1 \DPDS_{I}^n+ b_2 \DPDS_{I}^{(2)}+ b_3 \DPDS_{I}^{(3)}\right)\frac{y_I^{n+1}}{\sigma_I}
+\DPDS_I^n-\frac{\Delta t}2\frac{\partial\DPDS_I^n}{y_I}\DPDS_I^n\\
+ \frac{\Delta t^2}{6}\frac{\partial\DPDS_I^n}{\partial y_I}\frac{\partial\DPDS_I^n}{\partial y_I}\DPDS_I^n
+\frac{\Delta t^2}{6}\frac{\partial^2\DPDS_{I}^n}{\partial y_I^2}(\DPDS_I^n)^2
=\O(\Delta t^3).
\end{multline}
For $k=1,2$ the destruction terms can be expanded as
\begin{align}\label{eq:MPRK33aux4}
  \DPDS_{I}^{(k)}&=\DPDS_{I}^n+\frac{\partial \DPDS_{I}^n}{\mbfy}(\mbfy^{(k)}-\mbfy^n)+\frac12 (\mbfy^{(k)}-\mbfy^n)^T\mbfH_{\DPDS_{I}}^n(\mbfy^{(k)}-\mbfy^n)\nonumber\\
  &=\DPDS_{I}^n+\frac{\partial \DPDS_{I}^n}{y_I}(y_I^{(k)}-y_I^n)+\frac12 \frac{\partial^2\DPDS_{I}^n}{\partial y_I^2}(y_I^{(k)}-y_I^n)^2,
\end{align}
since derivatives of order higher than two vanish. 
Substituting this into \eqref{eq:MPRK33aux15}, results in
\begin{multline}\label{eq:MPRK33aux16}
-\DPDS_I^n\biggl( (\overbrace{b_1 +b_2+b_3}^{=1})\frac{y_I^{n+1}}{\sigma_I}
-1\biggr)-\frac{\partial\DPDS_I^n}{y_I}\biggl(\left(b_2(y_I^{(2)}-y_I^n)+b_3(y_I^{(3)}-y_I^n)\right)\frac{y_I^{n+1}}{\sigma_I}+\frac{\Delta t}2\DPDS_I^n\biggr)\\
+ \frac{\Delta t^2}{6}\frac{\partial\DPDS_I^n}{\partial y_I}\frac{\partial\DPDS_I^n}{\partial y_I}\DPDS_I^n
-\frac12\frac{\partial^2\DPDS_{I}^n}{\partial y_I^2}\biggl(\left(b_2(y_I^{(2)}-y_I^n)^2+b_3(y_I^{(3)}-y_I^n)^2\right)\frac{y_I^{n+1}}{\sigma_I}-\frac{\Delta t^2}{3}(\DPDS_I^n)^2\biggr)
\\=\O(\Delta t^3).
\end{multline}
Owing to \eqref{eq:MPRK33c2}, we have
\begin{equation}\label{eq:MPRK33aux3}
 y_I^{(2)}-y_I^n = -a_{21}\Delta t \DPDS_I^n \frac{y_I^{(2)}}{\pi_I},
\end{equation}
and
from \eqref{eq:MPRK33c3}, \eqref{eq:MPRK33aux4} and \eqref{eq:MPRK33aux3} we see
\begin{align}\label{eq:MPRK33aux5}
 y_I^{(3)}-y_I^n &= -\Delta t\left(a_{31} \DPDS_I^n+a_{32}\DPDS_I^{(2)}\right)\frac{y_I^{(3)}}{\rho_I}\nonumber\\
 &=-\Delta t\biggl((a_{31}+a_{32})\DPDS_I^n-a_{32}\frac{\partial \DPDS_I^n}{y_I}a_{21}\Delta t \DPDS_I^n\frac{y_I^{(2)}}{\pi_I}+\frac12 \frac{\partial^2\DPDS_{I}^n}{\partial y_I^2}\biggl(a_{21}\Delta t \DPDS_I^n \frac{y_I^{(2)}}{\pi_I}\biggr)^{\!\!2}\biggr)\frac{y_I^{(3)}}{\rho_I}.
\end{align}

Before we introduce \eqref{eq:MPRK33aux3} and \eqref{eq:MPRK33aux5} into \eqref{eq:MPRK33aux16}, we set $\kappa=1$, which implies $(\partial^2\DPDS_{I}^n/{\partial y_I^2})=\mu \kappa(\kappa-1) y_I^{\kappa-2}=0$.
Hence, we can drop the terms containing second derivatives in \eqref{eq:MPRK33aux16} and \eqref{eq:MPRK33aux5}.
We can exploit these conditions, when  we consider \eqref{eq:MPRK33aux16} with $\kappa=2$, as some terms can be neglected.
Setting $\kappa=1$, we have 
\begin{multline*}
 b_2(y_I^{(2)}-y_I^n)+b_3(y_I^{(3)}-y_I^n)=\\-\Delta t \DPDS_I^n \biggl(b_2 a_{21} \frac{y_I^{(2)}}{\pi_I}+b_3(a_{31}+a_{32})\frac{y_I^{(3)}}{\rho_I}-\underbrace{b_3 a_{32}a_{21}}_{=1/6}\Delta t\frac{\partial \DPDS_I^n}{y_I}\frac{y_I^{(2)}}{\pi_I}\frac{y_I^{(3)}}{\rho_I}\biggr)
\end{multline*}
according to \eqref{eq:MPRK33aux3} and \eqref{eq:MPRK33aux5}, since $\partial^2\DPDS_{I}^n/{\partial y_I^2}=0$. 
Substituting this into \eqref{eq:MPRK33aux16} yields
\begin{multline*}
-\DPDS_I^n\biggl(\frac{y_I^{n+1}}{\sigma_I}
-1\biggr)+\Delta t\DPDS_I^n\frac{\partial\DPDS_I^n}{y_I}\biggl(\biggl(b_2 a_{21} \frac{y_I^{(2)}}{\pi_I}+b_3(a_{31}+a_{32})\frac{y_I^{(3)}}{\rho_I}\biggr)\frac{y_I^{n+1}}{\sigma_I}-\frac12\biggr)\\
+ \frac{\Delta t^2}{6}\frac{\partial\DPDS_I^n}{\partial y_I}\frac{\partial\DPDS_I^n}{\partial y_I}\DPDS_I^n\biggl(1-\frac{y_I^{(2)}}{\pi_I}\frac{y_I^{(3)}}{\rho_I}\frac{y_I^{n+1}}{\sigma_I}\biggr)=\O(\Delta t^3).
\end{multline*}
A subsequent division by $-\DPDS_I^n=-\mu y_I^n\ne 0$ and utilization of $\partial \DPDS_I/\partial y_I=\mu \kappa y_I^{\kappa-1}=\mu$ results in
\begin{multline*}
\frac{y_I^{n+1}}{\sigma_I}
-1-\Delta t\mu\biggl(\biggl(b_2 a_{21} \frac{y_I^{(2)}}{\pi_I}+b_3(a_{31}+a_{32})\frac{y_I^{(3)}}{\rho_I}\biggr)\frac{y_I^{n+1}}{\sigma_I}-\frac12\biggr)\\
- \frac{\Delta t^2}{6}\mu^2\biggl(1-\frac{y_I^{(2)}}{\pi_I}\frac{y_I^{(3)}}{\rho_I}\frac{y_I^{n+1}}{\sigma_I}\biggr)=\O(\Delta t^3).
\end{multline*}
Since $\mu>0$ was chosen arbitrary, we find that this holds true for all $\mu>0$.
From Lemma~\ref{lem:lemO} we can conclude that
\begin{align}\label{eq:MPRK33aux10}
 \frac{y_I^{n+1}}{\sigma_I}&=1+\O(\Delta t^3),\\ \label{eq:MPRK33aux11}
 \biggl( b_2 a_{21}\frac{y_I^{(2)}}{\pi_I}+b_3 (a_{31}+a_{32})\frac{y_I^{(3)}}{\rho_I}\biggr) \frac{y_I^{n+1}}{\sigma_I}&=\frac{1}2+\O(\Delta t^2),\\ \label{eq:MPRK33aux12}
 \frac{y_I^{(2)}}{\pi_I}\frac{y_I^{(3)}}{\rho_I}\frac{y_I^{n+1}}{\sigma_I}    
         &= 1+\O(\Delta t)
\end{align}
hold true.

The above equations contain products of Patankar-weights.
To find conditions for the PWDs, we determine the limits of the Patankar-weights.
In this regard, equation \eqref{eq:MPRK33aux10} shows $y_I^{n+1}/\sigma_I\to 1$.
Substitution of this into \eqref{eq:MPRK33aux11} and \eqref{eq:MPRK33aux12} yields 
\begin{equation}\label{eq:MPRK33aux18a}
b_2 a_{21}\frac{y_I^{(2)}}{\pi_I}+b_3 (a_{31}+a_{32})\frac{y_I^{(3)}}{\rho_I}\to\frac12
\end{equation}
and 
\begin{equation}\label{eq:MPRK33aux18b}
 \frac{y_I^{(2)}}{\pi_I}\frac{y_I^{(3)}}{\rho_I}\to 1.
\end{equation}
Next, we show that none of the Patankar-weights ${y_I^{(2)}}/{\pi_I}$ and ${y_I^{(3)}}/{\rho_I}$ can tend to infinity.
To do so, we must consider two cases. 
If $b_2=0$, \eqref{eq:RK3condA} becomes $b_3(a_{31}+a_{32})=1/2$, so we can conclude $y_I^{(3)}/\rho_I\to 1$ from \eqref{eq:MPRK33aux18a} and thus, $y_I^{(2)}/\pi_I\to 1$ from \eqref{eq:MPRK33aux18b}.
If $b_2> 0$, both terms on the left hand side of \eqref{eq:MPRK33aux18a} are positive, since $a_{21},a_{32},b_3>0$ and $a_{31}\geq 0$, hence, $y_I^{(2)}/\pi_I\not\to\infty$ and ${y_I^{(3)}}/{\rho_I}\not\to\infty$. 
Consequently, owing to \eqref{eq:MPRK33aux18b}, we find that none of the Patankar-weights ${y_I^{(2)}}/{\pi_I}$ or ${y_I^{(3)}}/{\rho_I}$ can tend to zero, as this would require the other weight to tend to infinity.
Denoting by $\Gamma_I^{(2)}$ and $\Gamma_I^{(3)}$ the limits of $y_I^{(2)}/\pi_I$ and $y_I^{(3)}/\rho_I$, we have
\begin{equation}\label{eq:MPRK33aux17}
\frac{y_I^{(2)}}{\pi_I}\to \Gamma_I^{(2)},\quad \frac{y_I^{(3)}}{\rho_I}\to \Gamma_I^{(3)},
\end{equation}
with $\Gamma_I^{(2)},\Gamma_I^{(3)}>0$ and 
\begin{align}\label{eq:MPRK33aux19a}
\Gamma_I^{(2)} \Gamma_I^{(3)}&=1,\\\label{eq:MPRK33aux19a2}
b_2 a_{21} \Gamma_I^{(2)}+b_3(a_{31}+a_{32})\Gamma_I^{(3)}&=1/2. 
\end{align}

Now we consider the case $\kappa=2$ in  \eqref{eq:MPRK33aux16}.
From \eqref{eq:MPRK33aux5} and \eqref{eq:MPRK33aux17} we see
\begin{align*}
 y_I^{(3)}-y_I^n &=-\Delta t\biggl((a_{31}+a_{32})\DPDS_I^n-a_{32}\frac{\partial \DPDS_I^n}{y_I}a_{21}\Delta t \DPDS_I^n\frac{y_I^{(2)}}{\pi_I}+\overbrace{\frac12 \frac{\partial^2\DPDS_{I}^n}{\partial y_I^2}\biggl(a_{21}\Delta t \DPDS_I^n \underbrace{\frac{y_I^{(2)}}{\pi_I}}_{\mathclap{\eq\limits_{\eqref{eq:MPRK33aux17}}\O(1)}}\biggr)^{\!\!2}}^{=\O(\Delta t^2)}\biggr)\frac{y_I^{(3)}}{\rho_I},\\
 &=-\Delta t\biggl((a_{31}+a_{32})\DPDS_I^n-\underbrace{a_{32}\frac{\partial \DPDS_I^n}{y_I}a_{21}\Delta t \DPDS_I^n\frac{y_I^{(2)}}{\pi_I}}_{=\O(\Delta t)}\biggr)\frac{y_I^{(3)}}{\rho_I}+\O(\Delta t^3),
\end{align*}
which implies
\begin{align*}
 (y_I^{(3)}-y_I^n)^2 
 &=\Delta t^2 (a_{31}+a_{32})^2(\DPDS_I^n)^2\biggl(\frac{y_I^{(3)}}{\rho_I}\biggr)^{\!\!2}+\O(\Delta t^3).
\end{align*}
Together with \eqref{eq:MPRK33aux3} we find
\begin{multline*}
 b_2(y_I^{(2)}-y_I^n)+b_3(y_I^{(3)}-y_I^n)=\\-\Delta t \DPDS_I^n \biggl(b_2 a_{21} \frac{y_I^{(2)}}{\pi_I}+b_3(a_{31}+a_{32})\frac{y_I^{(3)}}{\rho_I}-\frac{\Delta t}{6}\frac{\partial \DPDS_I^n}{y_I}\frac{y_I^{(2)}}{\pi_I}\frac{y_I^{(3)}}{\rho_I}\biggr)+\O(\Delta t^3)
\end{multline*}
and
\begin{multline*}
 b_2(y_I^{(2)}-y_I^n)^2+b_3(y_I^{(3)}-y_I^n)^2=\\
 \Delta t^2 (\DPDS_I^n)^2\biggl(b_2 a_{21}^2 \biggl(\frac{y_I^{(2)}}{\pi_I}\biggr)^{\!\!2}+b_3(a_{31}+a_{32})^2\biggl(\frac{y_I^{(3)}}{\rho_I}\biggr)^{\!\!2}\biggr)+\O(\Delta t^3).
\end{multline*}
Substituting this and \eqref{eq:MPRK33aux10} into \eqref{eq:MPRK33aux16} 
yields
\begin{multline*}
\Delta t\DPDS_I^n\frac{\partial\DPDS_I^n}{y_I}\biggl(\left(b_2 a_{21} \frac{y_I^{(2)}}{\pi_I}+b_3(a_{31}+a_{32})\frac{y_I^{(3)}}{\rho_I}\right)\frac{y_I^{n+1}}{\sigma_I}-\frac12\biggr)\\
+ \frac{\Delta t^2}{6}\frac{\partial\DPDS_I^n}{\partial y_I}\frac{\partial\DPDS_I^n}{\partial y_I}\DPDS_I^n\biggl(1-\frac{y_I^{(2)}}{\pi_I}\frac{y_I^{(3)}}{\rho_I}\frac{y_I^{n+1}}{\sigma_I}\biggr)\\
-\frac12\Delta t^2 (\DPDS_I^n)^2\frac{\partial^2\DPDS_{I}^n}{\partial y_I^2}\biggl(\biggl(b_2 a_{21}^2 \biggl(\frac{y_I^{(2)}}{\pi_I}\biggr)^{\!\!2}+b_3(a_{31}+a_{32})^2\biggl(\frac{y_I^{(3)}}{\rho_I}\biggr)^{\!\!2}\biggr)\frac{y_I^{n+1}}{\sigma_I}-\frac13\biggr)\\
=\O(\Delta t^3).
\end{multline*}
Owing to \eqref{eq:MPRK33aux11} and \eqref{eq:MPRK33aux12}, we find
\begin{equation*}
-\frac12\Delta t^2 (\DPDS_I^n)^2\frac{\partial^2\DPDS_{I}^n}{\partial y_I^2}\biggl(\biggl(b_2 a_{21}^2 \biggl(\frac{y_I^{(2)}}{\pi_I}\biggr)^{\!\!2}+b_3(a_{31}+a_{32})^2\biggl(\frac{y_I^{(3)}}{\rho_I}\biggr)^{\!\!2}\biggr)\frac{y_I^{n+1}}{\sigma_I}-\frac13\biggr)
=\O(\Delta t^3),
\end{equation*}
which shows
\begin{equation*}
b_2 a_{21}^2 \biggl(\frac{y_I^{(2)}}{\pi_I}\biggr)^{\!\!2}+b_3(a_{31}+a_{32})^2\biggl(\frac{y_I^{(3)}}{\rho_I}\biggr)^{\!\!2}
=\frac13+\O(\Delta t),
\end{equation*}
due to \eqref{eq:MPRK33aux10} and \eqref{eq:MPRK33aux17}.
Utilization of \eqref{eq:MPRK33aux17} results in an additional equation 
\begin{equation}\label{eq:MPRK33aux19b}
b_2 a_{21}^2 (\Gamma_I^{(2)})^2+b_3(a_{31}+a_{32})^2 (\Gamma_I^{(3)})^2=\frac13. 
\end{equation}
containing $\Gamma_I^{(2)}$ and $\Gamma_I^{(3)}$.
To determine the values $\Gamma_I^{(2)}$ and $\Gamma_I^{(3)}$ we can use Lemma~\ref{lem:MPRK33sys}, which shows that the system \eqref{eq:MPRK33aux19a}, \eqref{eq:MPRK33aux19a2} and \eqref{eq:MPRK33aux19b} has the unique solution
\[
\Gamma_I^{(2)}=\Gamma_I^{(3)}=1.
\]
Together with \eqref{eq:MPRK33aux10} and \eqref{eq:MPRK33aux12}, this leads to 
\begin{align}\label{eq:MPRK33aux23}
 \frac{y_I^{(2)}}{\pi_I}=1+\O(\Delta t),\quad
 \frac{y_I^{(3)}}{\rho_I}=1+\O(\Delta t).
\end{align}
Substituting this into \eqref{eq:MPRK33aux3} and \eqref{eq:MPRK33aux5}, we find $y_I^{(2)}=y_I^n+\O(\Delta t)$ and $y_I^{(3)}=y_I^n+\O(\Delta t)$.
Hence, we obtain
\begin{equation}\label{eq:MPRKIIIaux20}
 1+\O(\Delta t)=\frac{1}{1+\O(\Delta t)}=\frac{\pi_I}{y_I^{(2)}}=\frac{\pi_I}{y_I^n+\O(\Delta t)}=\frac{\pi_I}{y_I^n}+\O(\Delta t),
\end{equation}
from which we conclude
\begin{equation}\label{eq:MPRKIIIaux19a}
 \pi_I=y_I^n+\O(\Delta t).
\end{equation}
In an analogous manner, we can conclude
\begin{equation}\label{eq:MPRKIIIaux19b}
\rho_I=y_I^n+\O(\Delta t).
\end{equation}
from \eqref{eq:MPRK33aux23}.
Since $I$ was chosen arbitrary, we can let it run from 1 to $N$ and find that \eqref{eq:MPRK33order1} and \eqref{eq:MPRK33order2} are necessary conditions.

The same is true for the other equations derived above. 
In particular,
\begin{align}\label{eq:MPRK33aux20a}
 \frac{y_i^{(2)}}{\pi_i}&=1+\O(\Delta t),\quad i=1,\dots,N,\\
 \frac{y_i^{(3)}}{\rho_i}&=1+\O(\Delta t),\quad i=1,\dots,N,\label{eq:MPRK33aux20b}
\end{align}
hold true.
These equations are helpful to find a concise representation of \eqref{eq:MPRK33aux11}. 
On account of \eqref{eq:MPRK33c2} and \eqref{eq:MPRK33aux20a}, we have
\begin{equation}\label{eq:MPRK33aux21}
 y_i^{(2)}=y_i^n + a_{21}\Delta t(\PPDS_i^n-\DPDS_i^n)+\O(\Delta t^2)
\end{equation}
for $i=1,\dots,N$.
Similar, \eqref{eq:MPRK33c3} and \eqref{eq:MPRK33aux20b} show
\begin{equation}\label{eq:MPRK33aux22}
y_i^{(3)}=y_i^n + \Delta t\bigl(a_{31}(\PPDS_i^n-\DPDS_i^n)+a_{32}(\PPDS_i^{(2)}-\DPDS_i^{(2)})\bigr) +\O(\Delta t^2),
\end{equation}
for $i=1,\dots,N$.
According to \eqref{eq:MPRK33aux21}, we have $\mbfy^{(2)}-\mbfy^n=\O(\Delta t)$, and from \eqref{eq:MPRK33aux01} we see
\begin{equation*}
 \PPDS_i^{(2)}-\DPDS_i^{(2)}=\PPDS_i^n-\DPDS_i^n+\O(\Delta t).
\end{equation*}
Hence, 
\begin{equation*}
y_i^{(3)}=y_i^n + \Delta t(a_{31}+a_{32})(\PPDS_i^n-\DPDS_i^n) +\O(\Delta t^2),
\end{equation*}
follows from \eqref{eq:MPRK33aux22}.
Substituting this into \eqref{eq:MPRK33aux11}, and taking account of \eqref{eq:MPRKIIIaux19a} and \eqref{eq:MPRKIIIaux19b}, results in
\begin{equation*}
 b_2 a_{21}\frac{y_I^n+a_{21}\Delta t(\PPDS_I^n-\DPDS_I^n)}{\pi_I}+b_3(a_{31}+a_{32})\frac{y_I^n+(a_{31}+a_{32})\Delta t(\PPDS_I^n-\DPDS_I^n)}{\rho_I}=\frac12+\O(\Delta t^2).
\end{equation*}
Again, $I$ was chosen arbitrary, and letting it run from $1$ to $N$, shows that condition \eqref{eq:MPRK33order3} is necessary.
Finally, analogous to \eqref{eq:MPRKIIIaux20}, \eqref{eq:MPRK33aux10} and \eqref{eq:MPRK33numexp} show
\begin{equation*}
 \sigma_I=y_I^n+\Delta t(\PPDS_I^n-\DPDS_I^n) + \frac{\Delta t^2}{2}\frac{\partial (\PPDS_I^n-\DPDS_I^n)}{\partial \mbfy}(\mathbf \PPDS^n-\mathbf \DPDS^n)+\O(\Delta t^3).
\end{equation*}
By letting $I$ run from $1$ to $N$, we see that also condition \eqref{eq:MPRK33order4} is necessary.

Now we show that the conditions \eqref{eq:MPRK33order} are sufficient, to make \eqref{eq:MPRK33} a third order MPRK scheme.
We start our investigation with the choice $\delta = 1$. 
The MPRK scheme \eqref{eq:MPRK33} can be written in the form of three linear systems
\[
\mbfM^{(2)}\mbfy^{(2)}=\mbfy^n,\quad \mbfM^{(3)}\mbfy^{(3)}=\mbfy^n,\quad\mbfM\mbfy^{n+1}=\mbfy^{n}.
\]
Since $\delta=1$, utilizing Lemma~\ref{lem:mbound} yields $(\mbfM^{(2)})^{-1}=\O(1)$, $(\mbfM^{(3)})^{-1}=\O(1)$ and $\mbfM^{-1}=\O(1)$.
Thus, we can conclude $\mbfy^{(2)}=\O(1)$, $\mbfy^{(3)}=\O(1)$ and $\mbfy^{n+1}=\O(1)$.
Together with conditions \eqref{eq:MPRK33order1}, \eqref{eq:MPRK33order2}, and \eqref{eq:MPRK33order4}, this results in
\begin{align}\label{eq:MPRKIIIaux7}
 \frac{y_i^{(2)}}{\pi_i}&=\O(1),\\
\label{eq:MPRKIIIaux8}
 \frac{y_i^{(3)}}{\rho_i}&=\O(1),\\
 \frac{y_i^{n+1}}{\sigma_i}&=\O(1) \label{eq:MPRKIIIaux8b}
\end{align}
for $i=1,\dots,N$, since $y_i^n>0$.
The boundedness of the Patankar-weights \eqref{eq:MPRKIIIaux7} shows that \eqref{eq:MPRK33c2} yields
\begin{equation}\label{eq:MPRKIIIaux9}
y_i^{(2)}=y_i^n + a_{21}\Delta t\sum_{j=1}^N\biggl( p_{ij}^n\frac{y_j^{(2)}}{\pi_j}-d_{ij}^n\frac{y_i^{(2)}}{\pi_i}\biggr)=y_i^n+\O(\Delta t)
\end{equation}
for $i=1,\dots,N$.
This allows us to use \eqref{eq:MPRK33aux01} to expand $p_{ij}(\mbfy^{(2)})$ and $d_{ij}(\mbfy^{(2)})$ in the form
\begin{equation}\label{eq:MPRKIIIaux18}
p_{ij}(\mbfy^{(2)})=p_{ij}^n+\O(\Delta t)=\O(1),\quad
d_{ij}(\mbfy^{(2)})=d_{ij}^n+\O(\Delta t)=\O(1)
\end{equation}
for $i,j=1,\dots,N$.
Substituting this into \eqref{eq:MPRK33c3}, and taking account of \eqref{eq:MPRKIIIaux8}, we find 
\begin{multline}\label{eq:MPRKIIIaux17}
 y_i^{(3)} = y_i^n + \Delta t\sum_{j=1}^N\biggl((a_{31}p_{ij}^n+a_{32}p_{ij}^{(2)})\frac{y_j^{(3)}}{\rho_j}-(a_{31}d_{ij}^{n}+a_{32}d_{ij}^{(2)})\frac{y_i^{(3)}}{\rho_i}\biggr)
 =y_i^n+\O(\Delta t)
\end{multline}
as well. 
Now, we show that \eqref{eq:MPRKIIIaux9} and \eqref{eq:MPRKIIIaux17} are valid for $\delta =0$ as well.
In this case, owing to \eqref{eq:MPRK33c2}, we have
\begin{equation*}
 y_i^{(2)} = \frac{y_i^n+a_{21}\Delta t P_i^n}{1+a_{21}\Delta t D_i^n/\pi_i}
\end{equation*}
for $i=1,\dots,N$.
Since $1/\pi_i=\O(1)$ according to  \eqref{eq:MPRK33order1}, we can conclude $y_i^{(2)}=\O(1)$ and thus $y_i^{(2)}/\pi_i=\O(1)$ for $i=1,\dots,N$.
Utilizing this in \eqref{eq:MPRK33c2} we find
\begin{equation*}
 y_i^{(2)}=y_i^n + a_{21}\Delta t\biggl(P_i^n-D_i^n\frac{y_i^{(2)}}{\pi_i}\biggr)=y_i^n+\O(\Delta t),
\end{equation*}
as before in \eqref{eq:MPRKIIIaux9}. Hence, \eqref{eq:MPRKIIIaux18} is holds true for $\delta=0$ as well.
This, together with \eqref{eq:MPRK33order2}, shows
\begin{equation*}
 y_i^{(3)} = \frac{y_i^n+\Delta t(a_{31}P_i^n+a_{32}P_i^{(2)})}{1+\Delta t(a_{31}D_i^n+a_{32}D_i^{(2)})/\rho_i}=\O(1)
\end{equation*}
and in addition $y_i^{(3)}/\rho_i=\O(1)$ for $i=1,\dots,N$.
Consequently, we have
\begin{equation*}
 y_i^{(3)} = y_i^n + \Delta t\biggl(a_{31}P_i^n+a_{32}P_i^{(2)}-(a_{31}D_i^n+a_{32}D_i^{(2)})\frac{y_i^{(3)}}{\rho_i}\biggr)=y_i^n+\O(\Delta t)
\end{equation*}
for $i=1,\dots,N$, as in \eqref{eq:MPRKIIIaux17}.
The remaining part of the proof is independent of the value of $\delta$.

Owing to \eqref{eq:MPRKIIIaux9} and  \eqref{eq:MPRKIIIaux17}, \eqref{eq:MPRK33aux01} shows
\begin{subequations}\label{eq:MPRK33aux24}
\begin{align}
p_{ij}(\mbfy^{(k)})&=p_{ij}^n+\frac{\partial p_{ij}^n}{\partial\mbfy}(\mbfy^{(k)}-\mbfy^n)+\frac12(\mbfy^{(k)}-\mbfy^n)^T\mbfH_{p_{ij}}^n(\mbfy^{(k)}-\mbfy^n)+\O(\Delta t^3),\\
d_{ij}(\mbfy^{(k)})&=d_{ij}^n+\frac{\partial d_{ij}^n}{\partial\mbfy}(\mbfy^{(k)}-\mbfy^n)+\frac12(\mbfy^{(k)}-\mbfy^n)^T\mbfH_{d_{ij}}^n(\mbfy^{(k)}-\mbfy^n)+\O(\Delta t^3)
\end{align}
\end{subequations}
for $i,j=1,\dots,N$ and $k=2,3$.
Utilizing this and \eqref{eq:MPRKIIIaux8b}, the solution on the next time level \eqref{eq:MPRK33cnp1} satisfies
\begin{equation*}
y_i^{n+1}=y_i^n+\O(\Delta t),
\end{equation*}
for $i=1,\dots,N$. 
Hence, we can conclude
\begin{equation*}
 \frac{y_i^{n+1}}{\sigma_i}=1+\O(\Delta t)
\end{equation*}
from \eqref{eq:MPRK33order4}.
Inserting this and \eqref{eq:MPRK33aux24} into \eqref{eq:MPRK33cnp1} shows
\begin{equation*}
 y_i^{n+1}=y_i^n+\Delta t(P_i^n-D_i^n)+\O(\Delta t^2)
\end{equation*}
for $i=1,\dots,N$, since $b_1+b_2+b_3=1$ according to \eqref{eq:RKorder3}.
Now, we can conclude 
\begin{equation*}
 \frac{y_i^{n+1}}{\sigma_i}=1+\O(\Delta t^2)
\end{equation*}
from \eqref{eq:MPRK33order4}. 
Introducing this relation and \eqref{eq:MPRK33aux24} into \eqref{eq:MPRK33cnp1} yields
\begin{equation}\label{eq:MPRKIIIaux12}
y_i^{n+1}=y_i^n + \Delta t(P_i^n-D_i^n) + \Delta t\frac{\partial(P_i^n-D_i^n)}{\partial\mbfy}\bigl(b_2(\mbfy^{(2)}-\mbfy^n) + b_3(\mbfy^{(3)}-\mbfy^n)\bigr)+\O(\Delta t^3) 
\end{equation}
for $i=1,\dots,N$.
From \eqref{eq:MPRKIIIaux9} and \eqref{eq:MPRK33order1} we can conclude
\begin{equation*}
 \frac{y_i^{(2)}}{\pi_i}=1+\O(\Delta t),
\end{equation*}
thus, \eqref{eq:MPRK33c2} shows
\begin{equation}\label{eq:MPRKIIIaux10}
 y_i^{(2)}=y_i^n+a_{21}\Delta t(P_i^n-D_i^n)+ \O(\Delta t^2).
\end{equation}
for $i=1,\dots,N$.
Similar, \eqref{eq:MPRKIIIaux8} and \eqref{eq:MPRK33c3} imply
\begin{equation}\label{eq:MPRKIIIaux16}
 \frac{y_i^{(3)}}{\rho_i} = 1+\O(\Delta t)
\end{equation}
for $i=1,\dots,N$.
Thus, inserting this and \eqref{eq:MPRK33aux24} into \eqref{eq:MPRK33c3} shows
\begin{equation}\label{eq:MPRKIIIaux11}
 y_i^{(3)}=y_i^n+(a_{31}+a_{32})\Delta t(P_i^n-D_i^n)+\O(\Delta t^2)
\end{equation}
for $i=1,\dots,N$.
Finally, substitution of \eqref{eq:MPRKIIIaux10} and \eqref{eq:MPRKIIIaux11} into \eqref{eq:MPRKIIIaux12} results in
\begin{equation*}
 y_i^{n+1}=y_i^n+\Delta t(P_i^n-D_i^n)+\frac{\Delta t^2}{2}\frac{\partial(P_i^n-D_i^n)}{\partial\mbfy}(\mbfP^n-\mbfD^n)+\O(\Delta t^3),
\end{equation*}
since $b_2a_{21}+b_3(a_{31}+a_{32})=1/2$ due to \eqref{eq:RKorder0}.
Hence, we even have
\begin{equation}\label{eq:MPRKIIIaux13}
 \frac{y_i^{n+1}}{\sigma_i}=1+\O(\Delta t^3)
\end{equation}
for $i=1,\dots,N$.

This enables the proof of the third order accuracy of the MPRK scheme. 
Substitution of \eqref{eq:MPRKIIIaux13} and \eqref{eq:MPRK33aux24} into \eqref{eq:MPRK33cnp1} yields
\begin{multline*}
y_i^{n+1}=y_i^n+\Delta t(P_i^n-D_i^n)+\Delta t\frac{\partial(P_i^n-D_i^n)}{\partial\mbfy}\bigl(b_2(\mbfy^{(2)}-\mbfy^n)+b_3(\mbfy^{(3)}-\mbfy^n)\bigr)\\
+\frac{\Delta t}{2}\bigl(b_2(\mbfy^{(2)}-\mbfy^n)^T\mbfH_{P_i-D_i}^n(\mbfy^{(2)}-\mbfy^n)+b_3(\mbfy^{(3)}-\mbfy^n)^T\mbfH_{P_i-D_i}^n(\mbfy^{(3)}-\mbfy^n)\bigr)+\O(\Delta t^4).
\end{multline*}
Taking account of \eqref{eq:MPRKIIIaux10} and \eqref{eq:MPRKIIIaux11}, and using $b_2 a_{21}^2+b_3(a_{31}+a_{32})^2=1/3$, this can be written in the form 
\begin{multline}\label{eq:MPRKIIIaux15}
y_i^{n+1}=y_i^n+\Delta t(P_i^n-D_i^n)+\Delta t\frac{\partial(P_i^n-D_i^n)}{\partial\mbfy}\bigl(b_2(\mbfy^{(2)}-\mbfy^n)+b_3(\mbfy^{(3)}-\mbfy^n)\bigr)\\
+\frac{\Delta t^3}{6}(\mbfP^n-\mbfD^n)^T\mbfH_{P_i-D_i}^n(\mbfP^n-\mbfD^n)+\O(\Delta t^4).
\end{multline}
It remains to expand $b_2(\mbfy^{(2)}-\mbfy^n)+b_3(\mbfy^{(3)}-\mbfy^n)$ up to $\O(\Delta t^3)$.
Therefore, we use \eqref{eq:MPRK33c3} and \eqref{eq:MPRK33aux24} to see
\begin{multline*}
 y_i^{(3)}=y_i^n + (a_{31}+a_{32})\Delta t\sum_{j=1}^N\biggl(p_{ij}^n\biggl(1-\delta+\delta\frac{y_j^{(3)}}{\rho_j}\biggr)-d_{ij}^n\frac{y_i^{(3)}}{\rho_i}\biggr)\\
 +a_{32}\Delta t\sum_{j=1}^N\biggl(\frac{\partial p_{ij}^n}{\partial \mbfy}(\mbfy^{(2)}-\mbfy^n)\biggl(1-\delta+\delta\frac{y_j^{(3)}}{\rho_j}\biggr)-\frac{\partial d_{ij}^n}{\partial \mbfy}(\mbfy^{(2)}-\mbfy^n)\frac{y_i^{(3)}}{\rho_i}\ \biggr)+\O(\Delta t^3)
\end{multline*}
for $i=1,\dots,N$.
Insertion of \eqref{eq:MPRKIIIaux10} and \eqref{eq:MPRKIIIaux16} shows
\begin{multline*}
 y_i^{(3)}=y_i^n + (a_{31}+a_{32})\Delta t\sum_{j=1}^N\biggl(p_{ij}^n\biggl(1-\delta+\delta\frac{y_j^{(3)}}{\rho_j}\biggr)-d_{ij}^n\frac{y_i^{(3)}}{\rho_i}\biggr)\\
 +a_{21}a_{32}\Delta t^2\frac{\partial (P_{i}^n-D_i^n)}{\partial \mbfy}(\mbfP^n-\mbfD^n)+\O(\Delta t^3)
\end{multline*}
for $i=1,\dots,N$.
Utilization of \eqref{eq:MPRK33c2} results in
\begin{multline}\label{eq:MPRKIIIaux14}
 b_2(y_i^{(2)}-y_i^n)+b_3(y_i^{(3)}-y_i^n)=\\
 \Delta t\sum_{j=1}^N\biggl(p_{ij}^n\biggl(b_2 a_{21}\biggl(1-\delta+\delta\frac{y_j^{(2)}}{\pi_j}\biggr)+b_3(a_{31}+a_{32})\biggl(1-\delta+\delta\frac{y_j^{(3)}}{\rho_j}\biggr)\biggr)\\
 -d_{ij}^n\biggl(b_2 a_{21}\frac{y_i^{(2)}}{\pi_i}+b_3(a_{31}+a_{32})\frac{y_i^{(3)}}{\rho_i}\biggr)\biggr)\\
 +\frac{\Delta t^2}6\frac{\partial (P_{i}^n-D_i^n)}{\partial \mbfy}(\mbfP^n-\mbfD^n)+\O(\Delta t^3)
\end{multline}
for $i=1,\dots,N$, since $b_3a_{21}a_{32}=1/6$ owing to \eqref{eq:RKorder6}.
Substitution of \eqref{eq:MPRKIIIaux10} and \eqref{eq:MPRKIIIaux11} into \eqref{eq:MPRKIIIaux14} in combination with \eqref{eq:MPRK33order3}  yields
\begin{multline*}
 b_2(y_i^{(2)}-y_i^n)+b_3(y_i^{(3)}-y_i^n)=
 \frac{\Delta t}2 (P_i^n-D_i^n)
 +\frac{\Delta t^2}6\frac{\partial (P_{i}^n-D_i^n)}{\partial \mbfy}(\mbfP^n-\mbfD^n)+\O(\Delta t^3)
\end{multline*}
for $i=1,\dots,N$.
Inserting this into \eqref{eq:MPRKIIIaux15} results in
\begin{multline*}
y_i^{n+1}=y_i^n+\Delta t(P_i^n-D_i^n)+\frac{\Delta t^2}2\frac{\partial(P_i^n-D_i^n)}{\partial\mbfy}(\mbfP^n-\mbfD^n)\\
+\frac{\Delta t^3 }6\sum_{k=1}^N\frac{\partial(P_i^n-D_i^n)}{\partial y_k}\frac{\partial (P_{k}^n-D_k^n)}{\partial \mbfy}(\mbfP^n-\mbfD^n)\\
+\frac{\Delta t^3}{6}(\mbfP^n-\mbfD^n)^T\mbfH_{P_i-D_i}^n(\mbfP^n-\mbfD^n)+\O(\Delta t^4)
\end{multline*}
for $i=1,\dots,N$. 
A comparison with \eqref{eq:MPRK33exact} completes the proof.
 \end{proof}

The following theorem defines a family of third order MPRK schemes. 
It is based on the idea to use a second order MPRK22($\alpha$) scheme of \cite{KopeczMeister2017} to compute the PWDs $\sigma_i$, as condition \eqref{eq:MPRK33order4} of Theorem~\ref{thm:MPRK33order} shows that $\sigma_i$ must be a second order approximation of $y_i(t^{n+1})$ for $i=1,\dots,N$.
\begin{thm}\label{thm:MPRK43PWD}
 Given an explicit three-stage third order Runge-Kutta scheme with non-negative weights,
 the MPRK scheme \eqref{eq:MPRK33} is of third order, if we choose
 \begin{subequations}\label{eq:PWD43}
 \begin{align}\label{eq:PWD43pi}
  \pi_i&=y_i^n,\quad i=1,\dots,N\\
  \rho_i&=y_i^n\left(\frac{y_i^{(2)}}{y_i^n}\right)^{\!\!1/p},\quad p=3a_{21}(a_{31}+a_{32})b_3,\quad i=1,\dots,N, \label{eq:PWD43rhop}\\
   &\begin{aligned}\label{eq:PWD43mu}
     \mathllap{\mu_i} &= y_i^n\left(\frac{y_i^{(2)}}{y_i^n}\right)^{1/q},\quad q=a_{21}, \quad i=1,\dots,N,
   \end{aligned}\\ 
     &\begin{multlined}[10cm] \label{eq:PWD43sigma}
    \mathllap{\sigma_i} = y_i^n + \Delta t\sum_{j=1}^N\biggl( \left(\beta_1 p_{ij}(\mbfy^{(1)})
+\beta_2 p_{ij}(\mbfy^{(2)})\right)\frac{\sigma_j}{\mu_j}\\
- \left(\beta_1 d_{ij}(\mbfy^{(1)})+ \beta_2 d_{ij}(\mbfy^{(2)})\right)\frac{\sigma_i}{\mu_i}\biggr),\quad i=1,\dots,N.
  \end{multlined} 
\end{align}
 with $\beta_1=1-\beta_2$ and $\beta_2=1/(2a_{21})$.
\end{subequations}
\end{thm}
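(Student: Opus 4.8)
The plan is to reduce everything to Theorem~\ref{thm:MPRK33order}: once the choice \eqref{eq:PWD43} is shown to define a legitimate MPRK scheme whose PWDs satisfy \eqref{eq:MPRK33order1}--\eqref{eq:MPRK33order4}, third order accuracy follows immediately. I would first check the MPRK requirements of Definition~\ref{def:MPRKdefn}. Here $\pi_i=y_i^n>0$ is trivially independent of $y_i^{(2)}$; $\rho_i$ is a positive power mean of the positive quantities $y_i^n$ and $y_i^{(2)}$ (positivity of the stages is guaranteed by Lemma~\ref{lem:MPRKpos}) and depends only on $y^n,y^{(2)}$, hence is independent of $y_i^{(3)}$; and $\mu_i,\sigma_i$ likewise depend only on $y^n,y^{(2)}$, so $\sigma_i$ is positive and independent of $y_i^{n+1}$. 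Conditions \eqref{eq:MPRK33order1} and \eqref{eq:MPRK33order2} are then immediate, since $y_i^{(2)}=y_i^n+\O(\Delta t)$ gives $y_i^{(2)}/y_i^n=1+\O(\Delta t)$, whence $\rho_i=y_i^n(1+\O(\Delta t))^{1/p}=y_i^n+\O(\Delta t)$; boundedness of the ratio near $1$ (Lemma~\ref{lem:mbound}) legitimises the power expansion.

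The central computation is \eqref{eq:MPRK33order3}. I would use the sharper expansion $y_i^{(2)}=y_i^n+a_{21}\Delta t(P_i^n-D_i^n)+\O(\Delta t^2)$ (obtained exactly as \eqref{eq:MPRKIIIaux10}, which rests only on $\pi_i=y_i^n$), giving
\[
\left(\frac{y_i^{(2)}}{y_i^n}\right)^{1/p}=1+\frac{a_{21}}{p}\,\Delta t\,\frac{P_i^n-D_i^n}{y_i^n}+\O(\Delta t^2),
\qquad
\rho_i=y_i^n+\frac{a_{21}}{p}\Delta t(P_i^n-D_i^n)+\O(\Delta t^2).
\]
Dividing the two numerators in \eqref{eq:MPRK33order3} by $\pi_i=y_i^n$ and by this $\rho_i$, the constant term equals $b_2a_{21}+b_3(a_{31}+a_{32})=\frac12$ by \eqref{eq:RK3condA}, while the coefficient of $\Delta t(P_i^n-D_i^n)/y_i^n$ is
\[
b_2a_{21}^2+b_3(a_{31}+a_{32})\Bigl((a_{31}+a_{32})-\frac{a_{21}}{p}\Bigr)
=\frac13-\frac{a_{21}b_3(a_{31}+a_{32})}{p},
\]
using \eqref{eq:RK3condB}. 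This vanishes precisely when $p=3a_{21}(a_{31}+a_{32})b_3$, which is exactly the definition \eqref{eq:PWD43rhop}; hence \eqref{eq:MPRK33order3} holds.

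It remains to verify \eqref{eq:MPRK33order4}. The key observation is that \eqref{eq:PWD43mu}--\eqref{eq:PWD43sigma}, combined with $\pi_i=y_i^n$, constitute precisely the second order MPRK22($a_{21}$) scheme of \cite{KopeczMeister2017} applied to the same PDS: it shares the internal stage $y^{(2)}$, its update PWD is $\mu_i=y_i^n(y_i^{(2)}/y_i^n)^{1/a_{21}}$ (the MPRK22($a_{21}$) weight denominator), and the update weights satisfy $\beta_1+\beta_2=1$ together with $\beta_2a_{21}=\frac12$, which are exactly the second order conditions. Consequently $\sigma_i$ is a second order approximation of $y_i(t^{n+1})$, i.e. $\sigma_i=y_i(t^{n+1})+\O(\Delta t^3)$; comparing with the Taylor expansion \eqref{eq:MPRK33exact} of the exact solution reproduces \eqref{eq:MPRK33order4} term by term. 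With all four conditions established, Theorem~\ref{thm:MPRK33order} yields third order accuracy (and the scheme is automatically unconditionally positive and conservative by Lemmas~\ref{lem:MPRKcons} and \ref{lem:MPRKpos}).

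The main obstacle is this last step: making rigorous the claim that the auxiliary scheme for $\sigma_i$ is genuinely second order for general, possibly nonlinear, PDS. I would either invoke the second order result of \cite{KopeczMeister2017} directly once the identification with MPRK22($a_{21}$) is made, or, to keep the argument self-contained, repeat the short Taylor expansion showing that $\sigma_i$ matches the first three terms of \eqref{eq:MPRK33exact}; this parallels the sufficiency computation already carried out for $y_i^{n+1}$ in the proof of Theorem~\ref{thm:MPRK33order}, but is lighter since only order two is required. A secondary point needing care is that the $\delta$-dependence of the stage \eqref{eq:MPRK33c2} does not affect the leading expansion of $y^{(2)}$, because $y_j^{(2)}/\pi_j=1+\O(\Delta t)$ makes the $\delta=0$ and $\delta=1$ production contributions agree to the relevant order; invoking Theorem~\ref{thm:MPRK33order} then dispenses with any further separate treatment of $\delta$.
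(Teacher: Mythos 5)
Your proposal is correct and follows essentially the same route as the paper: verify the four conditions of Theorem~\ref{thm:MPRK33order}, with \eqref{eq:MPRK33order1}--\eqref{eq:MPRK33order2} immediate, \eqref{eq:MPRK33order3} established by the binomial/Taylor expansion of $\rho_i$ (your coefficient computation $\tfrac13-a_{21}b_3(a_{31}+a_{32})/p=0$ is exactly the paper's, read as determining $p$ rather than checking it), and \eqref{eq:MPRK33order4} obtained by identifying \eqref{eq:PWD43mu}--\eqref{eq:PWD43sigma} with the second order MPRK22($a_{21}$) scheme of \cite{KopeczMeister2017}. Your additional check of the Definition~\ref{def:MPRKdefn} requirements and the remark on $\delta$ are harmless refinements of the same argument.
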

\begin{proof}
We need to verify that the choice of PWDs \eqref{eq:PWD43} satisfies conditions \eqref{eq:MPRK33order} of Theorem~\ref{thm:MPRK33order}.
Therefore, we make repeatedly use of the statements of the proof of Theorem~\ref{thm:MPRK33order}.
We also use Newton's generalized binomial theorem\footnote{The theorem can be deduced from the binomial series $\sum_{k=0}^\infty \binom{s}{k}x^k=(1+x)^s$, which is convergent for $\abs{x}<1$. See for instance \cite{howie2001}.}, which states that
\begin{equation}\label{eq:binomseries}
(x+y)^s = \sum_{k=0}^\infty\binom{s}{k}x^{s-k}y^k
\end{equation}
with
\begin{equation*}
 \binom{s}{k}=\frac{s(s-1)\dots (s-k+1)}{k!},\quad \binom{s}{0}=1
\end{equation*}
holds true for $s\in\mathbb R$, if $x>0$ and $\abs{y/x}<1$.
The theorem implies
\begin{equation}\label{eq:binomser_impla}
 \bigl(y_i^n+\O(\Delta t)\bigr)^s=(y_i^n)^s+\O(\Delta t)
\end{equation}
and 
\begin{equation}\label{eq:binomser_implb}
 \bigl(y_i^n+\eta\Delta t+\O(\Delta t^2)\bigr)^s=(y_i^n)^s+ s(y_i^n)^{s-1}\eta\Delta t+\O(\Delta t^2)
\end{equation}
for $s,\eta\in\mathbb R$, since $y_i^n>0$.

First, we note that condition \eqref{eq:MPRK33order1} is clearly satisfied by \eqref{eq:PWD43pi}. 
This allows us to conclude 
\begin{equation}\label{eq:PWD43aux1}
y_i^{(2)}=y_i^n+a_{21}\Delta t(P_i^n-D_i^n)+\O(\Delta t^2)
\end{equation}
for $i=1,\dots,N$, along the same lines as in \eqref{eq:MPRKIIIaux10} of Theorem~\ref{thm:MPRK33order}.
Introducing this into \eqref{eq:PWD43rhop} shows 
\begin{equation*}
\rho_i=(y_i^n)^{1/p-1}\bigl(y_i^n+\O(\Delta t)\bigr)^{1/p}\eq^{\eqref{eq:binomser_impla}}y_i^n+\O(\Delta t)
\end{equation*}
for $i=1,\dots,N$. 
Thus, condition \eqref{eq:MPRK33order2} holds true as well.

Next, we verify condition \eqref{eq:MPRK33order3}. 
From \eqref{eq:binomser_implb} and \eqref{eq:PWD43aux1} we find
\begin{equation*}
\rho_i=y_i^n + \frac{\Delta t(P_i^n-D_i^n)}{3(a_{31}+a_{32})b_3}+\O(\Delta t^2)
\end{equation*}
for $i=1,\dots,N$.
Defining $f(\Delta t)=1/(\xi+\Delta t \eta)$ for some constants $\xi$ and $\eta$, we can conclude $f(\Delta t)=f(0) + f'(0)\Delta t + \O(\Delta t^2)=1/\xi -\eta/\xi^2+\O(\Delta t^2)$, and hence,
\begin{equation*}
\frac{1}{\rho_i}=\frac{1}{y_i^n}-\frac{\Delta t(P_i^n-D_i^n)}{3(a_{31}+a_{32})b_3(y_i^n)^2}+\O(\Delta t^2)
\end{equation*}
for $i=1,\dots,N$.
Consequently,
\begin{multline*}
\frac{y_i^n + (a_{31}+a_{32})\Delta t(P_i^n-D_i^n)}{\rho_i}\\
=\bigl(y_i^n+(a_{31}+a_{32})\Delta t(P_i^n-D_i^n)\bigr)\left(\frac{1}{y_i^n}-\frac{\Delta t(P_i^n-D_i^n)}{3(a_{31}+a_{32})b_3(y_i^n)^2}+\O(\Delta t^2)\right)\\=
1+\frac{\Delta t(P_i^n-D_i^n)}{y_i^n}\left(a_{31}+a_{32}-\frac{1}{3(a_{31}+a_{32})b_3}\right)+\O(\Delta t^2)
\end{multline*}
for $i=1,\dots,N$.
Substituting this and \eqref{eq:PWD43pi} into condition \eqref{eq:MPRK33order3} shows
\begin{multline*}
 b_2a_{21}\frac{y_i^n+a_{21}\Delta t(P_i^n-D_i^n)}{\pi_i}+b_3(a_{31}+a_{32})\frac{y_i^n+(a_{31}+a_{32})\Delta t(P_i^n-D_i^n)}{\rho_i}\\
 =\underbrace{b_2a_{21}+b_3(a_{31}+a_{32})}_{=1/2}+\frac{\Delta t(P_i^n-D_i^n)}{y_i^n}\biggl(\underbrace{b_2a_{21}^2+b_3(a_{31}+a_{32})^2}_{=1/3}-\frac13\biggr)+\O(\Delta t^2)\\
 =\frac12+\O(\Delta t^2)
\end{multline*}
for $i=1,\dots,N$.
Hence, condition \eqref{eq:MPRK33order3} holds true.
Finally, \eqref{eq:MPRK33c2} and \eqref{eq:PWD43sigma} with PWDs \eqref{eq:PWD43mu} form the MPRK22($a_{21}$) scheme of \cite{KopeczMeister2017}.
As this is a second order scheme, condition \eqref{eq:MPRK33order4} is satisfied as well.
\end{proof}
The family of schemes introduced in Theorem~\ref{thm:MPRK43PWD} can be written in the form
\begin{subequations}\label{eq:MPRK43}
\begin{align}
  &\begin{aligned}
    \mathllap{y_i^{(1)}} &= y_i^n,
  \end{aligned}\\ \
  &\begin{aligned} 
    \mathllap{y_i^{(2)}} &= y_i^n + a_{21}\Delta t\sum_{j=1}^N\biggl( p_{ij}(\mbfy^{(1)})(1-\delta)+p_{ij}(\mbfy^{(1)})\frac{y_j^{(2)}}{y_j^n}\delta-d_{ij}(\mbfy^{(1)})\frac{y_i^{(2)}}{y_i^n}\biggr),
  \end{aligned}\\ 
  &\begin{aligned} 
    \mathllap{y_i^{(3)}} &= y_i^n + \Delta t\sum_{j=1}^N\biggl( \left(a_{31} p_{ij}(\mbfy^{(1)})+a_{32} p_{ij}(\mbfy^{(2)})\right)(1-\delta)\\
    &\qquad+\frac{\left(a_{31} p_{ij}(\mbfy^{(1)})+a_{32} p_{ij}(\mbfy^{(2)})\right)y_j^{(3)}\delta}{(y_j^{(2)})^{1/p}(y_j^{n})^{1/p-1}}-\frac{\left( a_{31} d_{ij}(\mbfy^{(1)})+ a_{32} d_{ij}(\mbfy^{(2)})\right)y_i^{(3)}}{(y_i^{(2)})^{1/p}(y_i^{n})^{1/p-1}}\biggr).
  \end{aligned}  \label{eq:MPRK43c}\\
    &\begin{aligned} 
    \mathllap{\sigma_i} &= y_i^n + \Delta t\sum_{j=1}^N\biggl( \frac{\left(\beta_1 p_{ij}(\mbfy^{(1)})
+\beta_2 p_{ij}(\mbfy^{(2)})\right)\sigma_j}{(y_j^{(2)})^{1/q}(y_j^{n})^{1/q-1}}- \frac{\left(\beta_1 d_{ij}(\mbfy^{(1)})+ \beta_2 d_{ij}(\mbfy^{(2)})\right)\sigma_i}{(y_i^{(2)})^{1/q}(y_i^{n})^{1/q-1}}\biggr).
  \end{aligned}\label{eq:MPRK43d}  \\
  &\begin{aligned} 
    \mathllap{y_i^{n+1}} &= y_i^n + \Delta t\sum_{j=1}^N\biggl( \left(b_1 p_{ij}(\mbfy^{(1)})+b_2 p_{ij}(\mbfy^{(2)})+b_3 p_{ij}(\mbfy^{(3)})\right)\frac{y_j^{n+1}}{\sigma_j}\biggr.\\
         &\biggl.\qquad\qquad\qquad\qquad- \left(b_1 d_{ij}(\mbfy^{(1)})- b_2 d_{ij}(\mbfy^{(2)})- b_3 d_{ij}(\mbfy^{(3)})\right)\frac{y_i^{n+1}}{\sigma_i}\biggr)
  \end{aligned}  
\end{align}
\end{subequations}
with $p=3a_{21}(a_{31}+a_{32})b_3$, $q=a_{21}$, $\beta_2=1/(2a_{21})$ and $\beta_1=1-\beta_2$ for $i=1,\dots,N$.
We denote the members of this family, which derive from case I in Lemma~\ref{lem:RK3pos}, by MPRK43I($\alpha$,$\beta$) if $\delta=1$ and by MPRK43Incs($\alpha$,$\beta$) if $\delta=0$. 
If the method comes from case II in Lemma~\ref{lem:RK3pos}, we denote it by MPRK43II($\gamma$) if $\delta=1$ and by MPRK43IIncs($\gamma$) if $\delta=0$.

To our knowledge, this is the first time that third order MPRK schemes are presented.
A third order Patankar type scheme based on a BDF method was presented in \cite{FormaggiaScotti2011}.

As the schemes \eqref{eq:MPRK43} incorporate the MPRK22($a_{21}$) scheme, we must restrict $a_{21}$ to $a_{21}\geq 1/2$.
Hence, the permissible Runge-Kutta parameters are given by the Butcher tableaus of Lemma~\ref{lem:RK3pos} with the additional restriction $\alpha\geq 1/2$ in case I.

The MPRK scheme \eqref{eq:MPRK43} can be understood as a four stage MPRK scheme with corresponding Butcher tableau
\[
\begin{array}{c|cccc}
 0        &                    & \\
 a_{21} & a_{21}\\
 a_{31}+a_{32} & a_{31}           & a_{32} & \\
 1 & \beta_1           & \beta_2 & \\\hline
          & b_1 & b_2 & b_3 & 0
\end{array}.
\]
The extra stage to compute the PWDs $\sigma_i$ requires no additional function evaluations, but nevertheless an additional linear system needs to be solved.
It is a future concern to prove or disprove, whether the construction of a third order three stage MPRK scheme is possible.

In the numerical experiments of Section~\ref{sec:numres} we consider six specific MPRK43 schemes.
Choosing $\alpha=1$ and $\beta=1/2$ in case I of Lemma~\ref{lem:RK3pos} yields the Butcher tableau
\[
\begin{array}{c|ccc}
 0        &                    & \\
 1 & 1\\
 1/2 & 1/4           & 1/4 & \\\hline
          & 1/6 & 1/6 & 2/3
\end{array}.
\]
The corresponding scheme MPRK43I($1$,$1/2$) is the only MPRK43 scheme with $p=q=1$ and it uses the original MPRK22(1) scheme from \cite{BDM2003}, which is based on Heun's method, to compute the PWDs $\sigma_i$.
Setting $\alpha=1/2$ and $\beta=3/4$ in case I of Lemma~\ref{lem:RK3pos}, we obtain the Butcher tableau
\[
\begin{array}{c|ccc}
 0        &                    & \\
 1/2 & 1/2\\
 3/4 & 0  & 3/4 & \\\hline
     & 2/9 & 1/3 & 4/9
\end{array}
\]
and the method MPRK43I($1/2$,$3/4$) with $p=q=1/2$. This method uses the MPRK22($1/2$) scheme, which is adapted from the midpoint method, to compute the PWDs $\sigma_i$.
Case II of Lemma~\ref{lem:RK3pos} with $\gamma=1/2$ provides the Butcher tableau 
\[
\begin{array}{c|ccc}
 0        &                    & \\
 2/3 & 2/3\\
 2/3 & 1/6  & 1/2 & \\\hline
     & 1/4 & 1/4 & 1/2
\end{array}.
\]
The associated MPRK scheme MPRK43II($2/3$) with $p=q=2/3$.
It employs Ralston's method MPRK22(2/3) to calculate the PWDs $\sigma_i$.
Besides the above three schemes, we also use their counterparts with $\delta=0$ in the numerical experiments of Section~\ref{sec:numres}. 

Of course, many other schemes are members of the family \eqref{eq:MPRK43}.
Here we made the restriction $p=q$ to allow for the same PWDs in \eqref{eq:MPRK43c} and \eqref{eq:MPRK43d}.
We also selected schemes, which use MPRK22($\alpha$) schemes to compute the PWDs $\sigma_i$, that were investigated in \cite{KopeczMeister2017}.

The numerical experiments, presented in Section~\ref{sec:numres}, will confirm the third order accuracy of the MPRK43 schemes. 
Additionally, numerical solutions of the Robertson problems will show that these schemes have the ability to integrate stiff PDS.
\section{Test problems}\label{sec:testcases}
For our numerical experiments, we consider the same test cases as in \cite{KopeczMeister2017}.
A simple linear test problem for which the analytical solution is known, two non-stiff nonlinear test problems and the stiff Robertson problem.

\subsection*{Linear test problem}
The simple linear test case is given by
\begin{equation}\label{eq:lintest}
 y_1'(t) = y_2(t) - a y_1(t),\quad y_2'(t) = a y_1(t) - y_2(t),
\end{equation}
with a constant parameter $a$ and initial values $y_1(0) = y_1^0$ and $y_2(0)=y_2^0$. 
We can write the right hand side in the form \eqref{eq:pijdij} with
\begin{align*}
p_{12}(\mbfy) &= y_2, & p_{21}(\mbfy) &= a y_1,\\
d_{12}(\mbfy) &= a y_1, & d_{21}(\mbfy) &= y_2,
\end{align*}
and $p_{ii}(\mbfy)=d_{ii}(\mbfy)=0$ for $i=1,2$.
The system describes exchange of mass between to constituents. The analytical solution is
\[
y_1(t) = (1+c\exp(-(a+1)t))y_1^\infty
\]
with the asymptotic solution
\[
y_1^\infty = \frac{y_1^0+y_2^0}{a+1},\quad c = \frac{y_1^0}{y_1^\infty}-1.
\]
The system is conservative and we get
\[
y_2(t) = y_1^0+y_2^0 - y_1(t).
\]
In the numerical simulations of Section~\ref{sec:numres} we use $a=5$ and initial values $y_1^0=0.9$ and $y_2^0=0.1$.
The solution is approximated on the time interval $[0,1.75]$.
\subsection*{Nonlinear test problem}
The non-stiff nonlinear test problem reads
\begin{equation}\label{eq:nonlintest}
 \begin{split}
  y_1'(t) &= -\frac{y_1(t)y_2(t)}{y_1(t)+1},\\
  y_2'(t) &= \frac{y_1(t)y_2(t)}{y_1(t)+1}-a y_2(t),\\
  y_3'(t) &= a y_2(t),
 \end{split}
\end{equation}
with initial conditions $y_i(0)=y_i^0$ for $i=1,2,3$.
To express the right hand side in the form \eqref{eq:pijdij} we can use
\begin{align*}
 p_{21}(\mbfy)=d_{21}(\mbfy)= \frac{y_1y_2}{y_1+1},\quad p_{32}(\mbfy)=d_{23}(\mbfy)=a y_2,
\end{align*}
and $p_{ij}(\mbfy)=d_{ij}(\mbfy)=0$ for all other combinations of $i$ and $j$.

The system represents a biogeochemical model for the description of an algal bloom, that transforms nutrients ($y_1$) via phytoplankton ($y_2$) into detritus ($y_3$).
In the numerical simulations of Section~\ref{sec:numres} we use the initial conditions $y_1^0=9.98$, $y_2^0=0.01$ and $y_3^0=0.01$.
The solution is approximated on the time interval $[0,30]$.
\subsection*{Original Brusselator test problem}
As another non-stiff nonlinear test case we consider the original Brusselator problem \cite{LefeverNicolis1971, HNW1993}
\begin{equation}\label{eq:brusselator}
 \begin{split}
  y_1'(t) &= -k_1 y_1(t),\\
  y_2'(t) &= -k_2 y_2(t) y_5(t),\\
  y_3'(t) &= k_2 y_2(t) y_5(t),\\
  y_4'(t) &= k_4 y_5(t),\\
  y_5'(t) &= k_1 y_1(t) - k_2 y_2(t) y_5(t) +k_3 y_5(t)^2y_6(t)-k_4 y_5(t),\\
  y_6'(t) &= k_2 y_2(t) y_5(t) - k_3 y_5(t)^2 y_6(t),
 \end{split}
\end{equation}
with constant parameters $k_i$ and initial values $y_i(0)=y_i^0$ for $i=1,\dots,6$.
The system can be written in the form \eqref{eq:pijdij}, setting
\begin{align*}
 p_{32}(\mbfy) &=d_{23}(\mbfy)= k_2 y_2 y_5, & p_{45}(\mbfy)&=d_{54}(\mbfy)=k_4 y_5, & p_{51}(\mbfy)&=d_{15}(\mbfy)=k_1 y_1,\\
 p_{56}(\mbfy)&=d_{65}(\mbfy)=k_3 y_5^2 y_6, & p_{65}(\mbfy)&=d_{56}(\mbfy)=k_2 y_2 y_5,
\end{align*}
and $p_{ij}(\mbfy)=d_{ji}(\mbfy)=0$ for all other combinations of $i$ and $j$.

In the numerical simulations of Section~\ref{sec:numres} we set $k_i=1$ for $i=1,\dots,6$ and the initial values $y_1(0)=y_2(0)=10$, $y_3(0)=y_4(0)=\mathtt
{eps}\approx2.2204\cdot10^{-16}$, and $y_5(0)=y_6(0)=0.1$.
The time interval of interest is $[0,6]$.
\subsection*{Robertson test problem}
To demonstrate the practicability of MPRK schemes in the case of stiff systems, we apply the schemes to the Robertson test case, which is given by
\begin{equation}\label{eq:robertson}
 \begin{split}
  y_1'(t) &= 10^4 y_2(t)y_3(t) - 0.04 y_1(t),\\
  y_2'(t) & = 0.04 y_1(t) - 10^4 y_2(t) y_3(t) - 3\cdot 10^7 y_2(t)^2,\\
  y_3'(t) &= 3\cdot 10^7 y_2(t)^2,
 \end{split}
\end{equation}
with initial values $y_i(0)=y_i^0$ for $i=1,2,3$.
For this problem the production and destruction rates \eqref{eq:pijdij} are given by
\[p_{12}(\mbfy)=d_{21}(\mbfy)=10^4y_2y_3,\quad p_{21}(\mbfy)=d_{12}(\mbfy)=0.04y_1,\quad p_{32}(\mbfy)=d_{23}(\mbfy)=3\cdot10^7 y_2,\]
and $p_{ij}(\mbfy)=d_{ij}(\mbfy)=0$ for all other combinations of $i$ and $j$.

We use the initial values $y_1(0)=y_1^0=1-2 \mathtt
{eps}$ and $y_2^0=y_3^0=\mathtt
{eps}\approx2.2204\cdot10^{-16}$ in the numerical simulations of Section~\ref{sec:numres}.

In this problem the reactions take place on very different time scales, the time interval of interest is $[10^{-6},10^{10}]$. Therefore, a constant time step size is not appropriate.
In the numerical simulations we use $\Delta t_i = 4^{i-1}\Delta t_0$ with $\Delta t_0=10^{-6}$ in the $i$th time step.
The small initial time step size $\Delta t_0$ is chosen to obtain an adequate resolution of $y_2$.

\section{Numerical results}\label{sec:numres}
\begin{figure}
\centering
\begin{subfigure}{.49\textwidth}
\centering
\includegraphics[width=\textwidth]{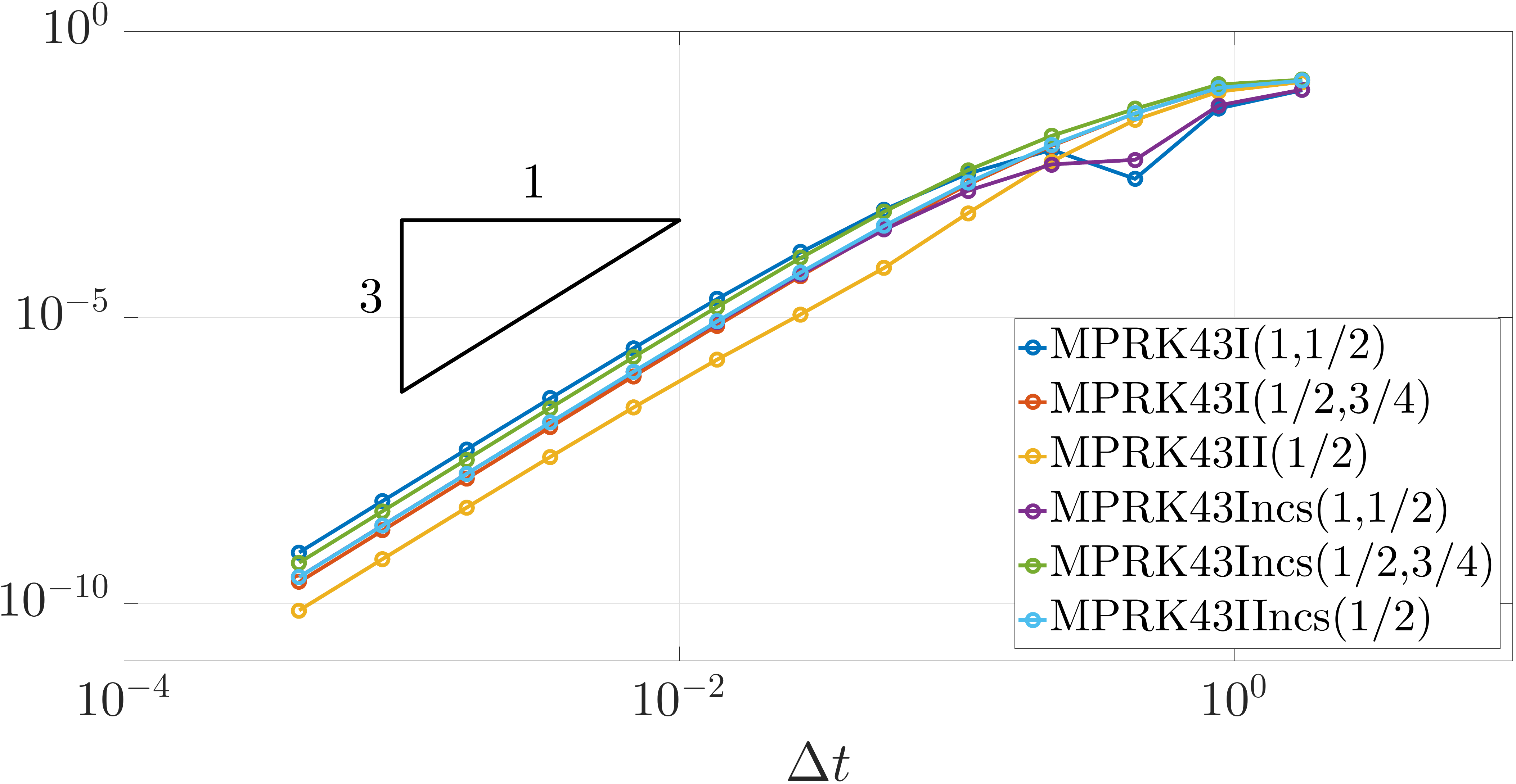}
\caption{Linear test problem \eqref{eq:lintest}}
\label{fig:testorder1}
\end{subfigure}
\hfill
\begin{subfigure}{.49\textwidth}
\centering
\includegraphics[width=\textwidth]{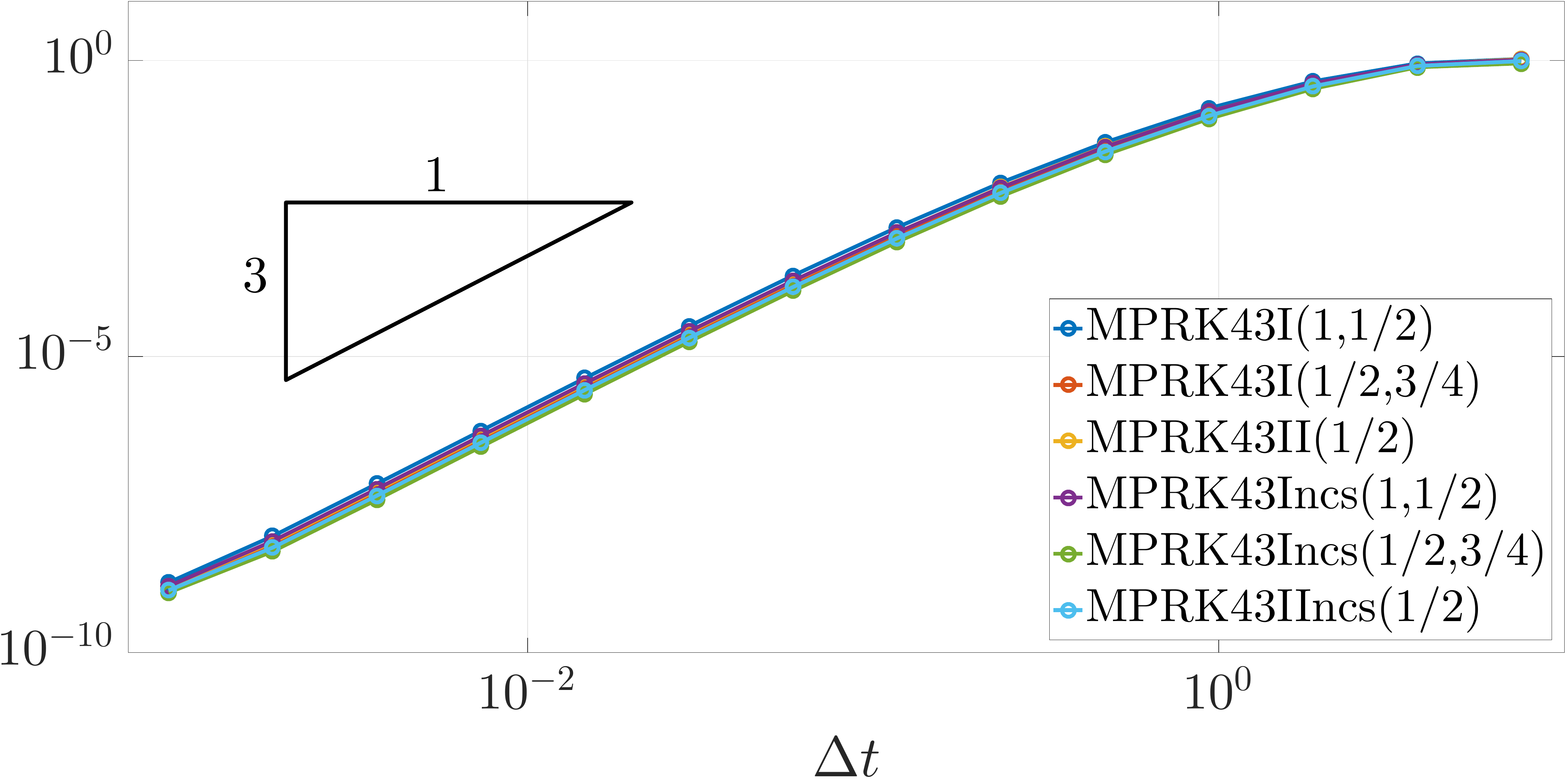}
\caption{Nonlinear test problem \eqref{eq:nonlintest}}
\label{fig:testorder2}
\end{subfigure}
\par\vspace{.5\baselineskip}
\begin{subfigure}{.49\textwidth}
\centering
\includegraphics[width=\textwidth]{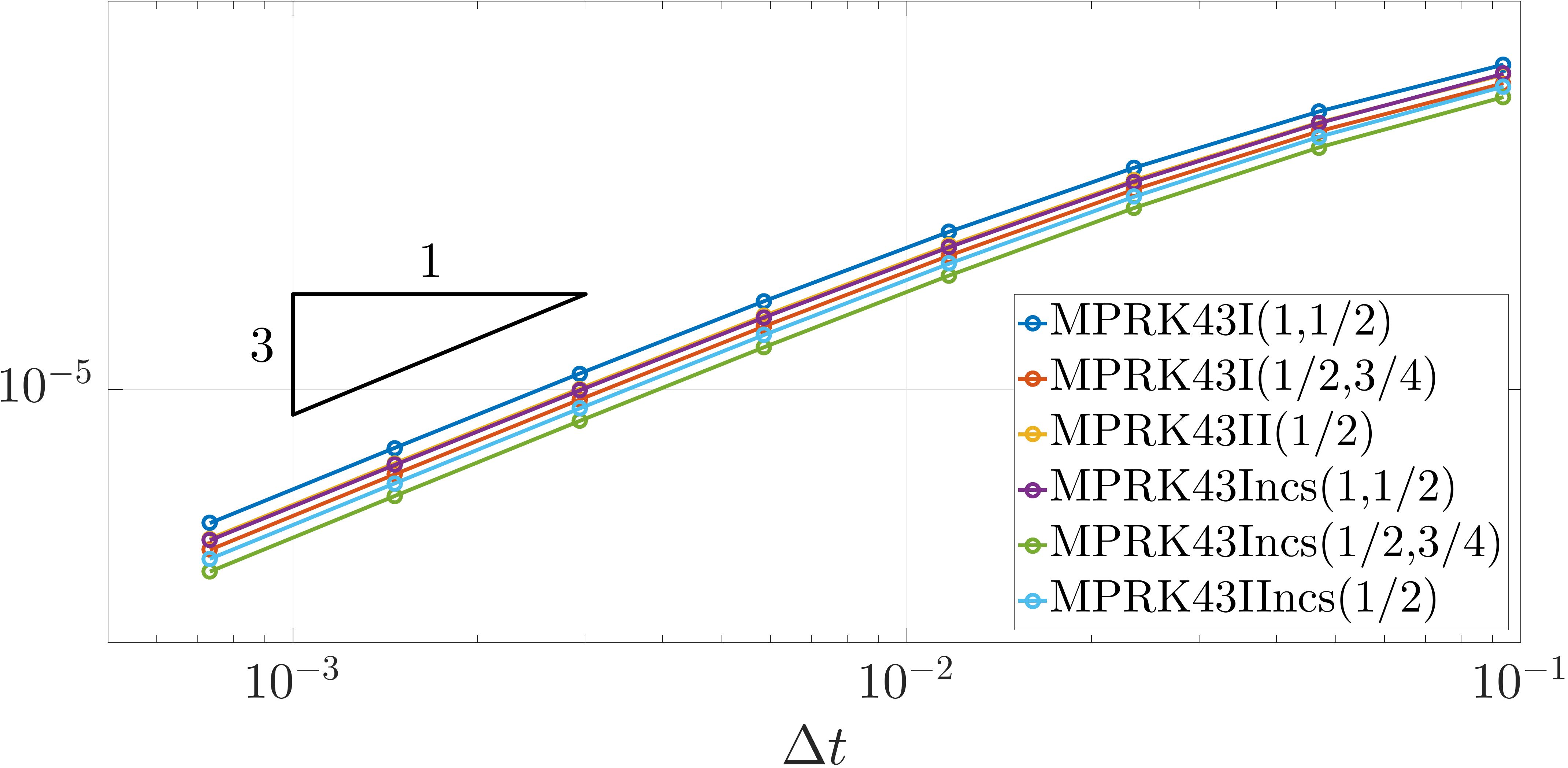}
\caption{Brusselator \eqref{eq:brusselator}}
\label{fig:testorder3}
\end{subfigure}
\caption{Error plots of various MPRK43 schemes.}
\label{fig:testorder}
\end{figure}
\begin{figure}[htb]
\centering
\begin{subfigure}{.49\textwidth}
\centering
 \includegraphics[width=\textwidth]{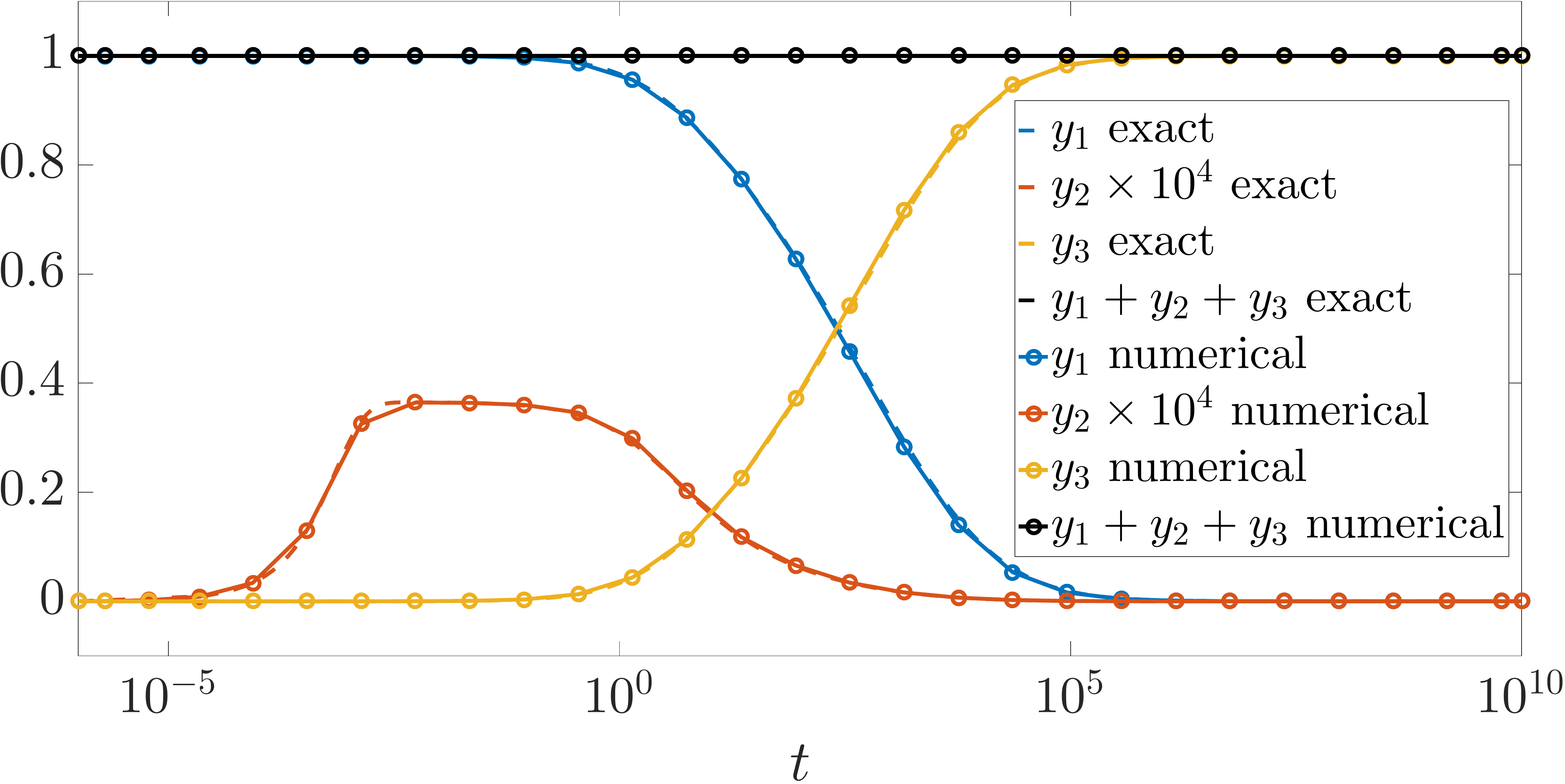}
 \caption{MPRK43I($1,1/2$)}
 \end{subfigure}
 \hfill
\begin{subfigure}{.49\textwidth}
\centering
 \includegraphics[width=\textwidth]{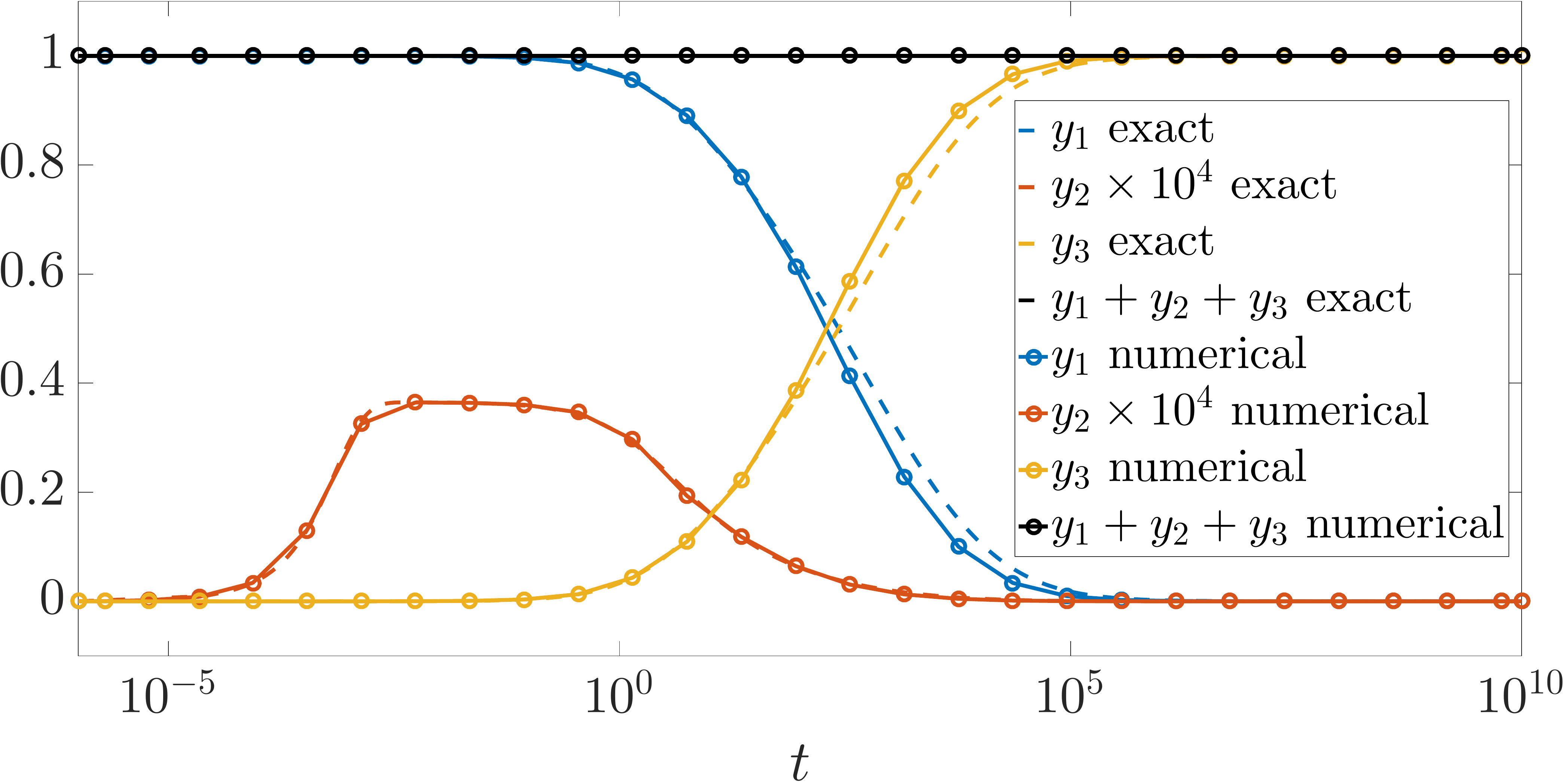}
 \caption{MPRK43Incs($1,1/2$)}
 \end{subfigure} 
 \par
\vspace{2\baselineskip} 
 \begin{subfigure}{.49\textwidth}
\centering
 \includegraphics[width=\textwidth]{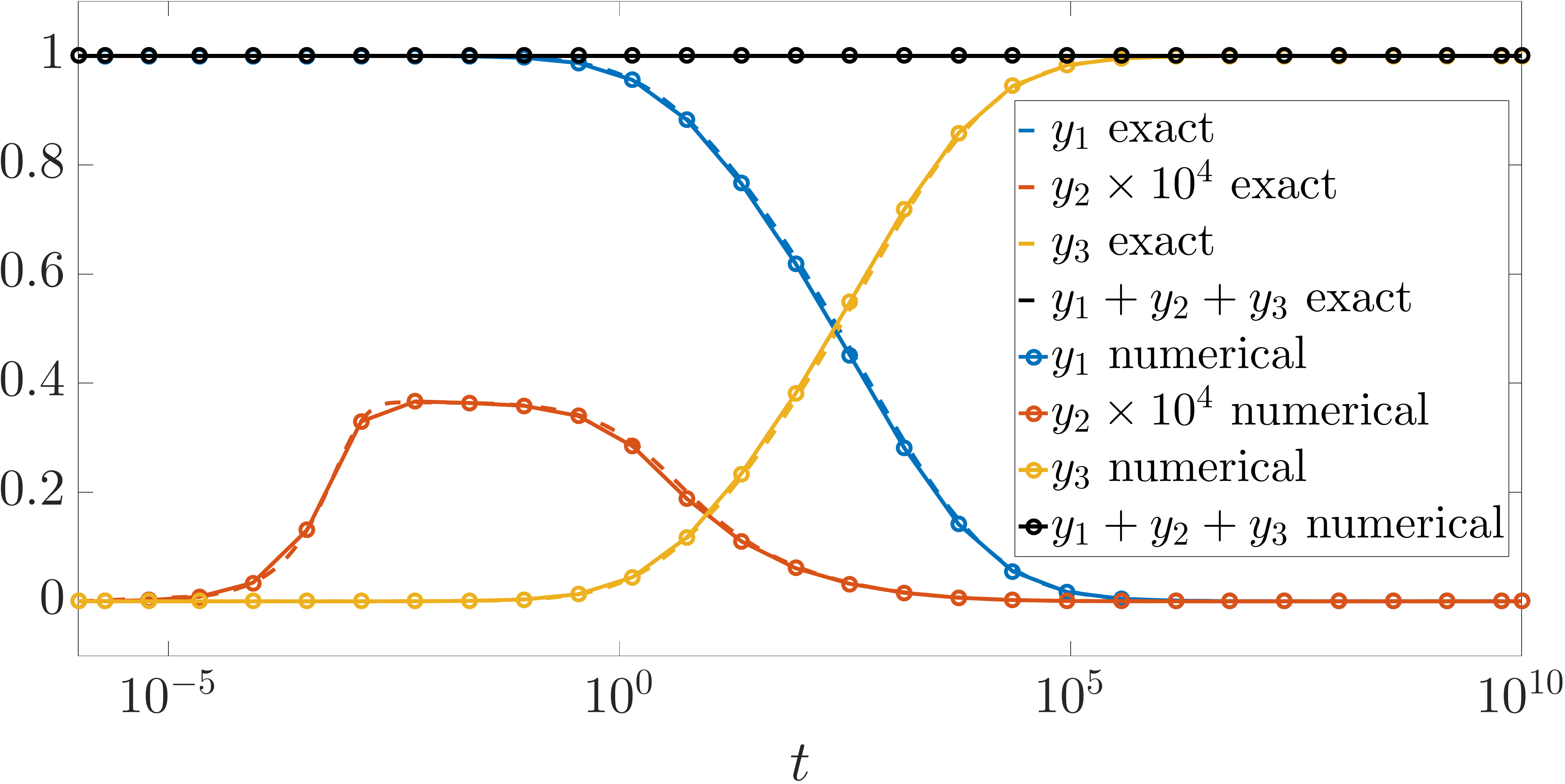}
 \caption{MPRK43I($1/2,3/4$)}
 \end{subfigure}
\hfill
 \begin{subfigure}{.49\textwidth}
\centering
 \includegraphics[width=\textwidth]{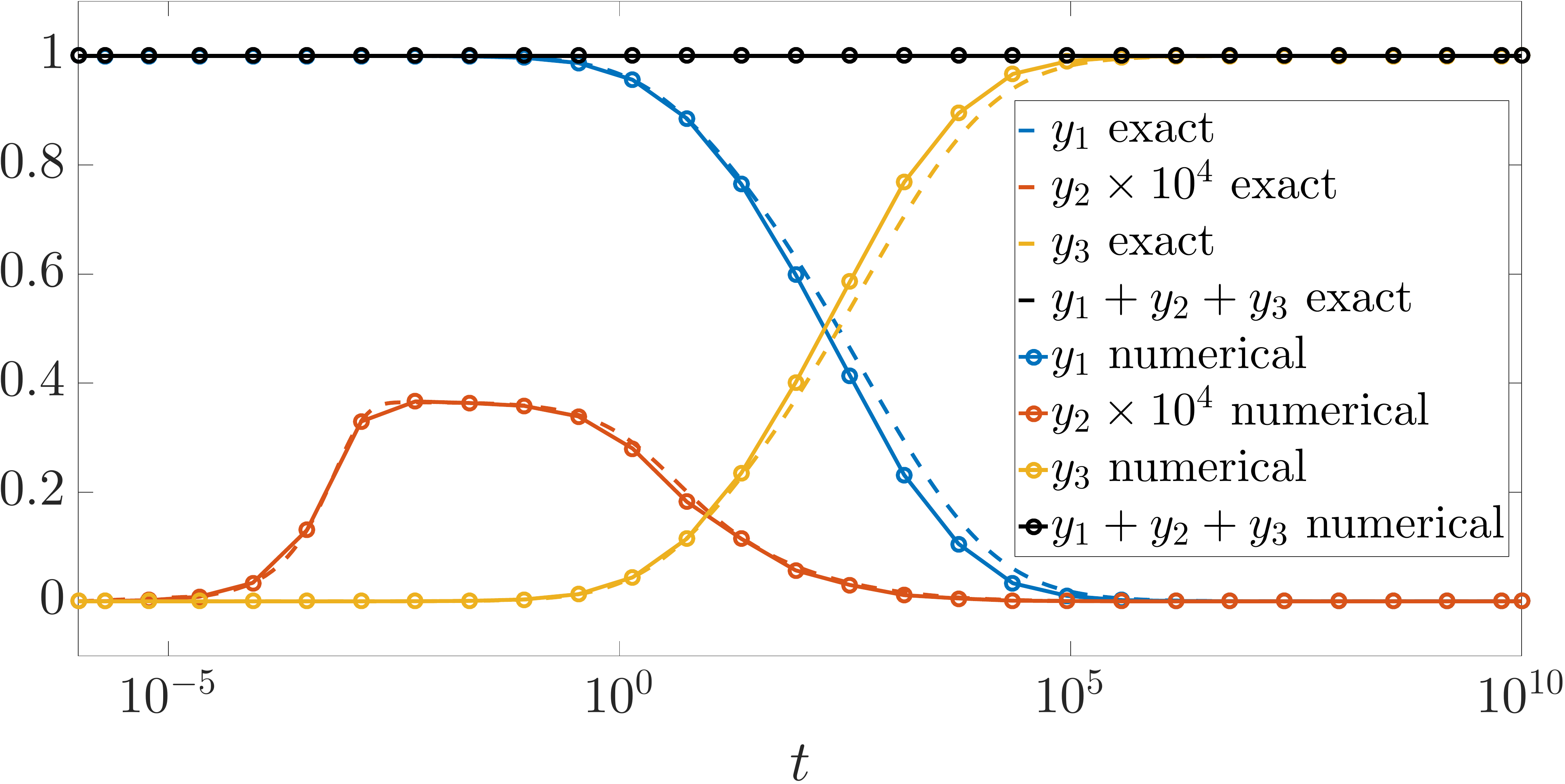}
 \caption{MPRK43Incs($1/2,3/4$)}
 \end{subfigure}
 \par\vspace{2\baselineskip} 
 \begin{subfigure}{.49\textwidth}
\centering
 \includegraphics[width=\textwidth]{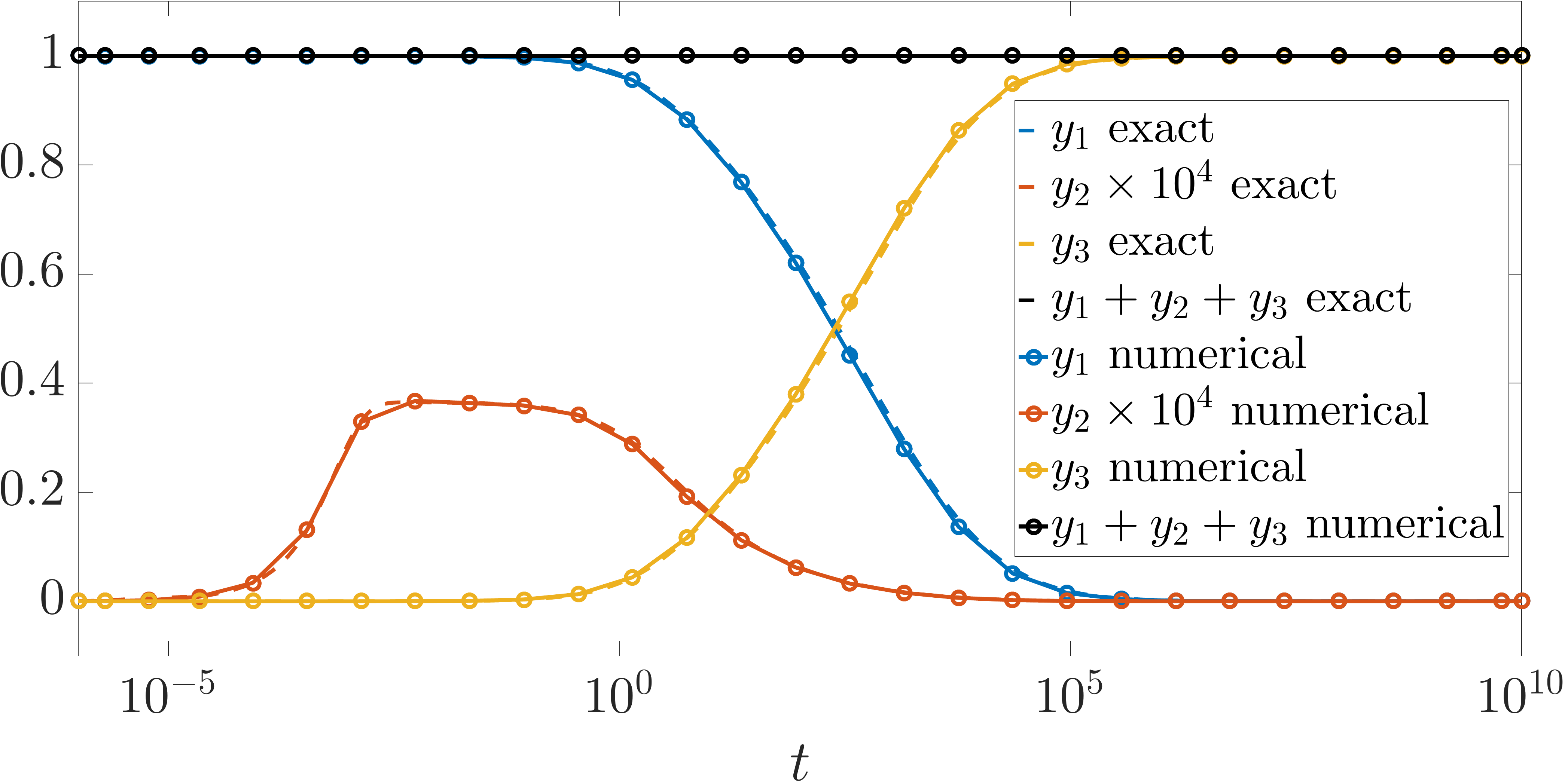}
 \caption{MPRK43II($1/2$)}
 \end{subfigure}
\hfill
 \begin{subfigure}{.49\textwidth}
\centering
 \includegraphics[width=\textwidth]{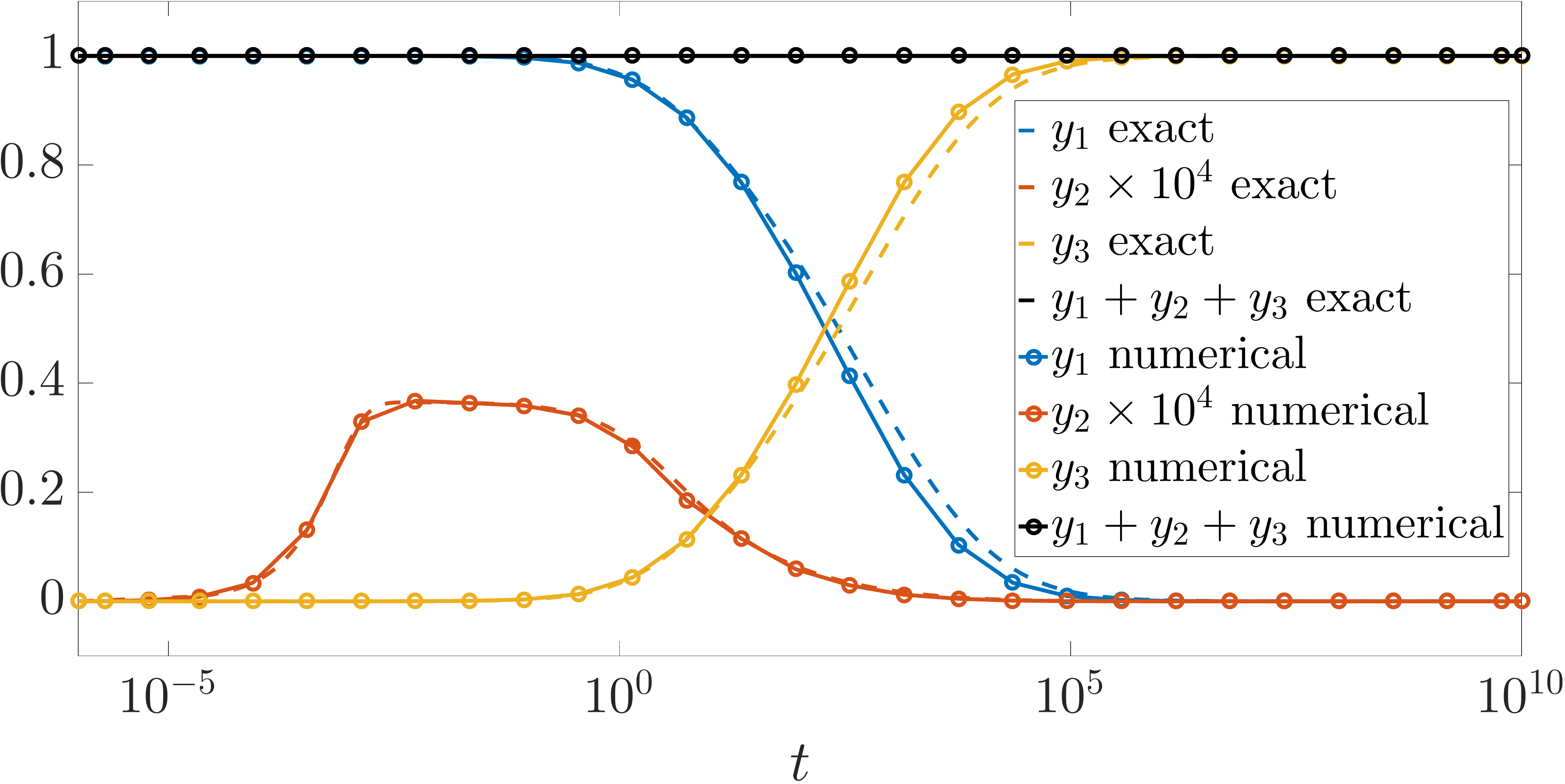}
 \caption{MPRK43IIncs($1/2$)}
 \end{subfigure}
 \caption{Numerical solutions of the Robertson problem \eqref{eq:robertson} for different MPRK43 schemes.}
 \label{fig:robertson}
\end{figure}

\begin{figure}[htb]
\centering
\begin{subfigure}{.49\textwidth}
\centering
 \includegraphics[width=\textwidth]{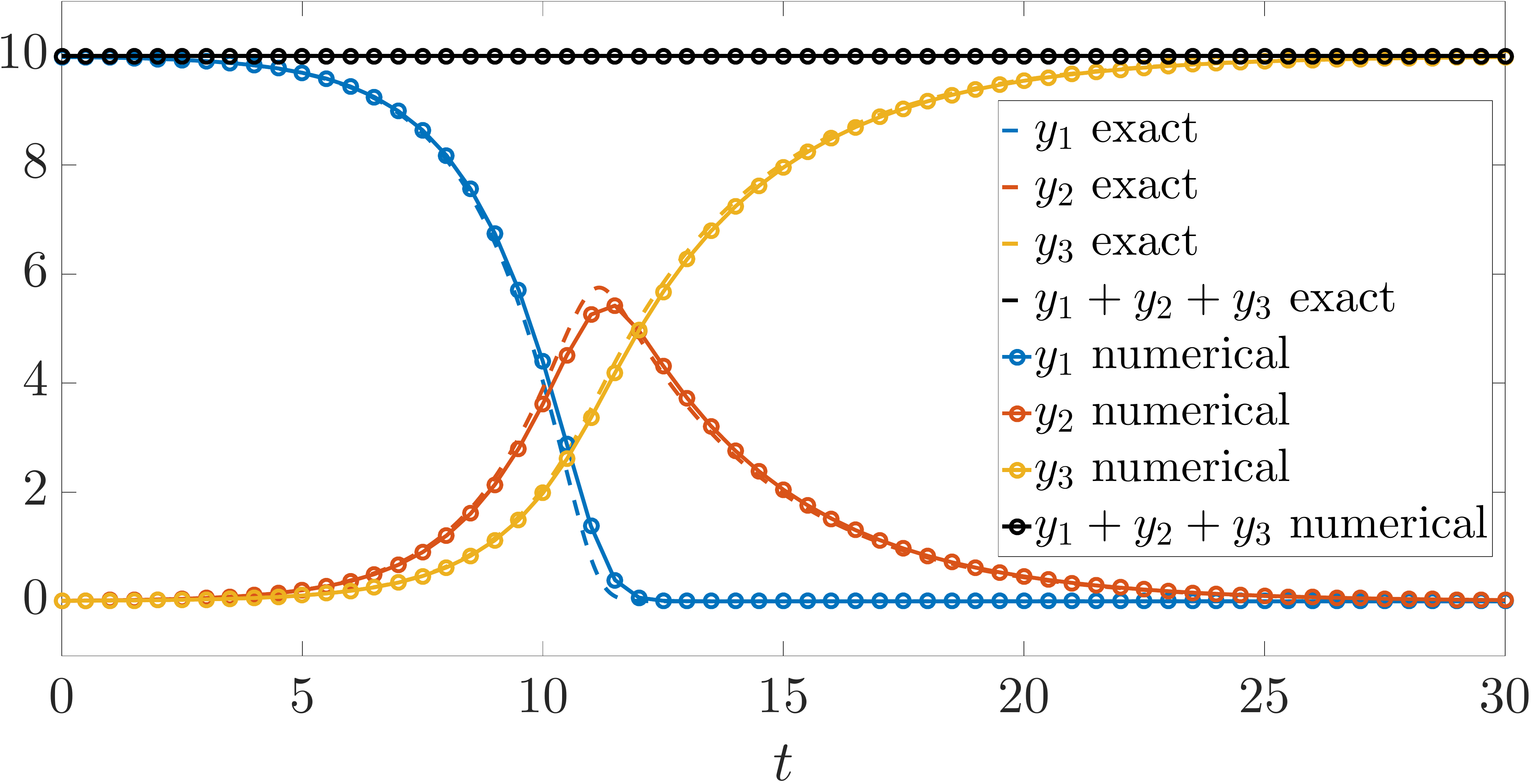}
 \caption{MPRK43I($1,1/2$)}
 \end{subfigure}
 \hfill
\begin{subfigure}{.49\textwidth}
\centering
 \includegraphics[width=\textwidth]{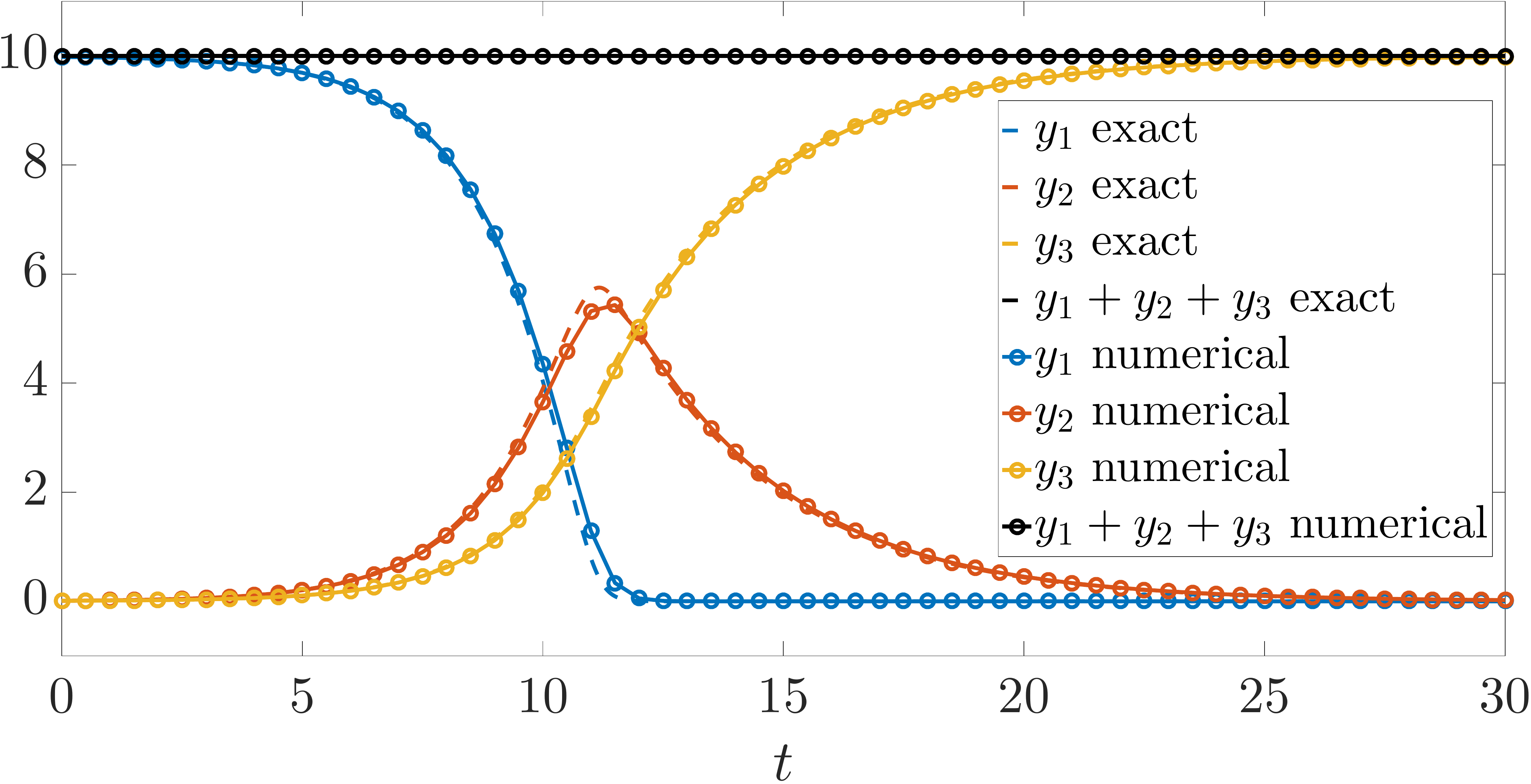}
 \caption{MPRK43Incs($1,1/2$)}
 \end{subfigure} 
 \par
\vspace{2\baselineskip} 
 \begin{subfigure}{.49\textwidth}
\centering
 \includegraphics[width=\textwidth]{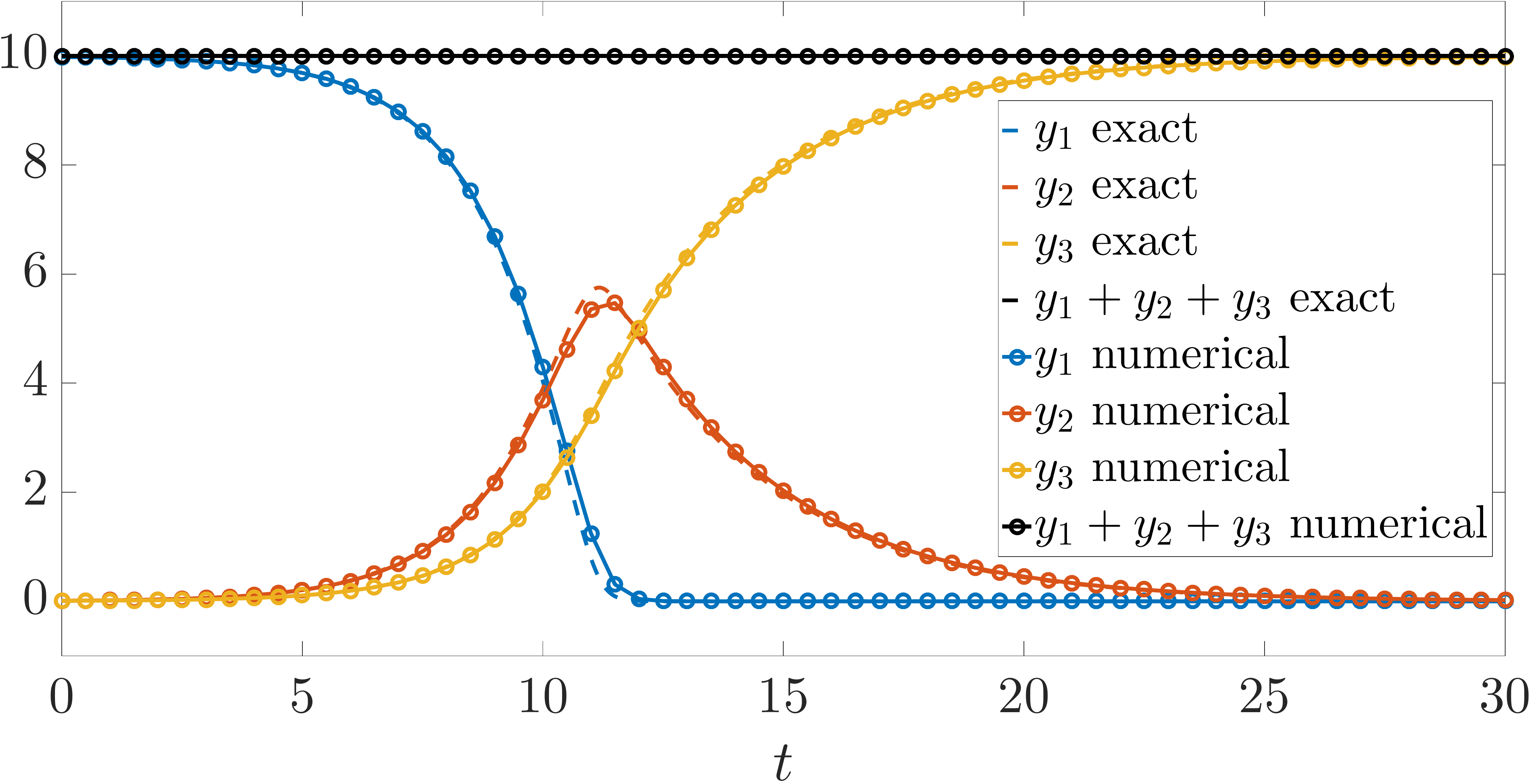}
 \caption{MPRK43I($1/2,3/4$)}
 \end{subfigure}
\hfill
 \begin{subfigure}{.49\textwidth}
\centering
 \includegraphics[width=\textwidth]{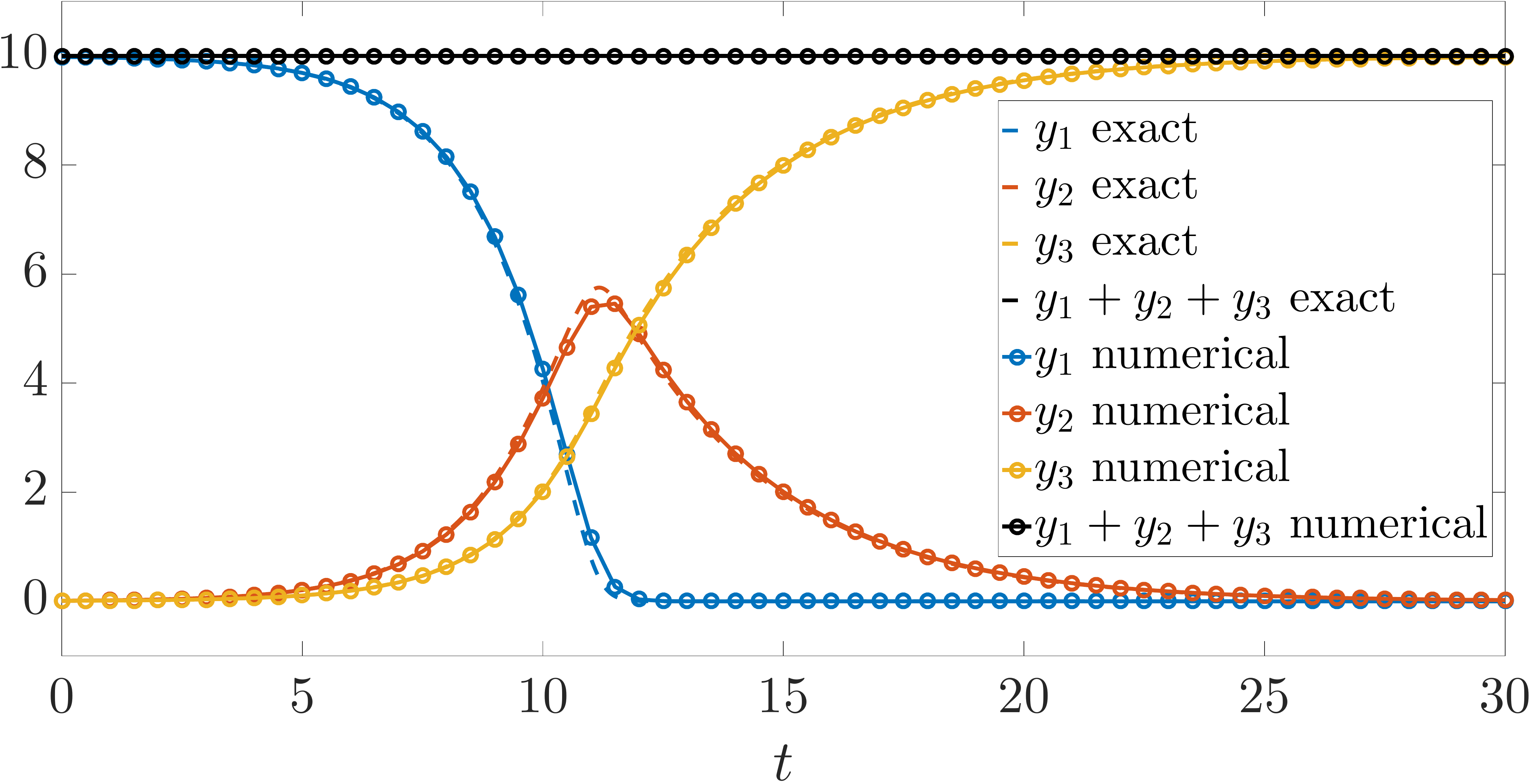}
 \caption{MPRK43Incs($1/2,3/4$)}
 \end{subfigure}
  \par
 \vspace{2\baselineskip}
  \begin{subfigure}{.49\textwidth}
\centering
 \includegraphics[width=\textwidth]{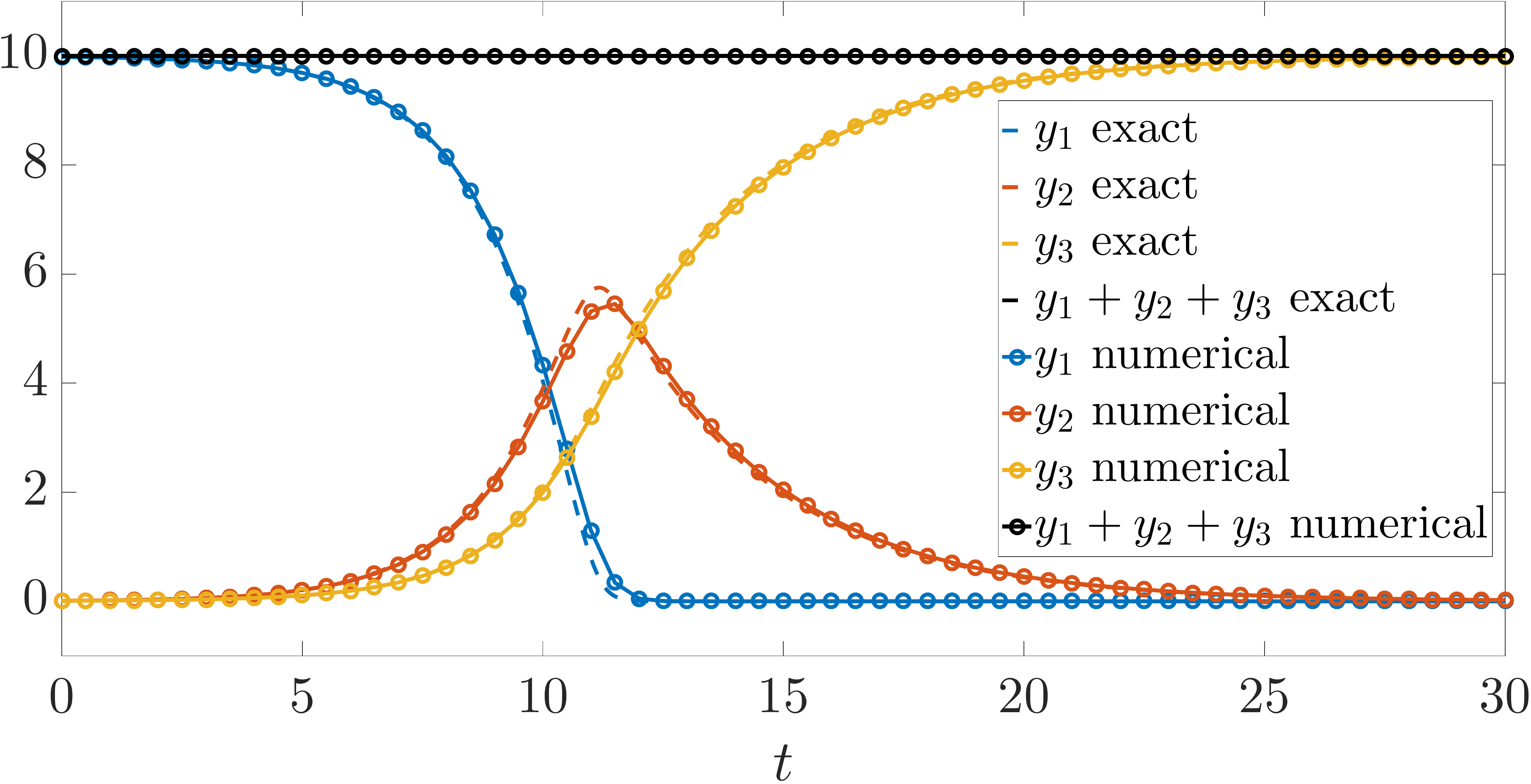}
 \caption{MPRK43II(1/2)}
 \end{subfigure}
 \hfill
  \begin{subfigure}{.49\textwidth}
\centering
 \includegraphics[width=\textwidth]{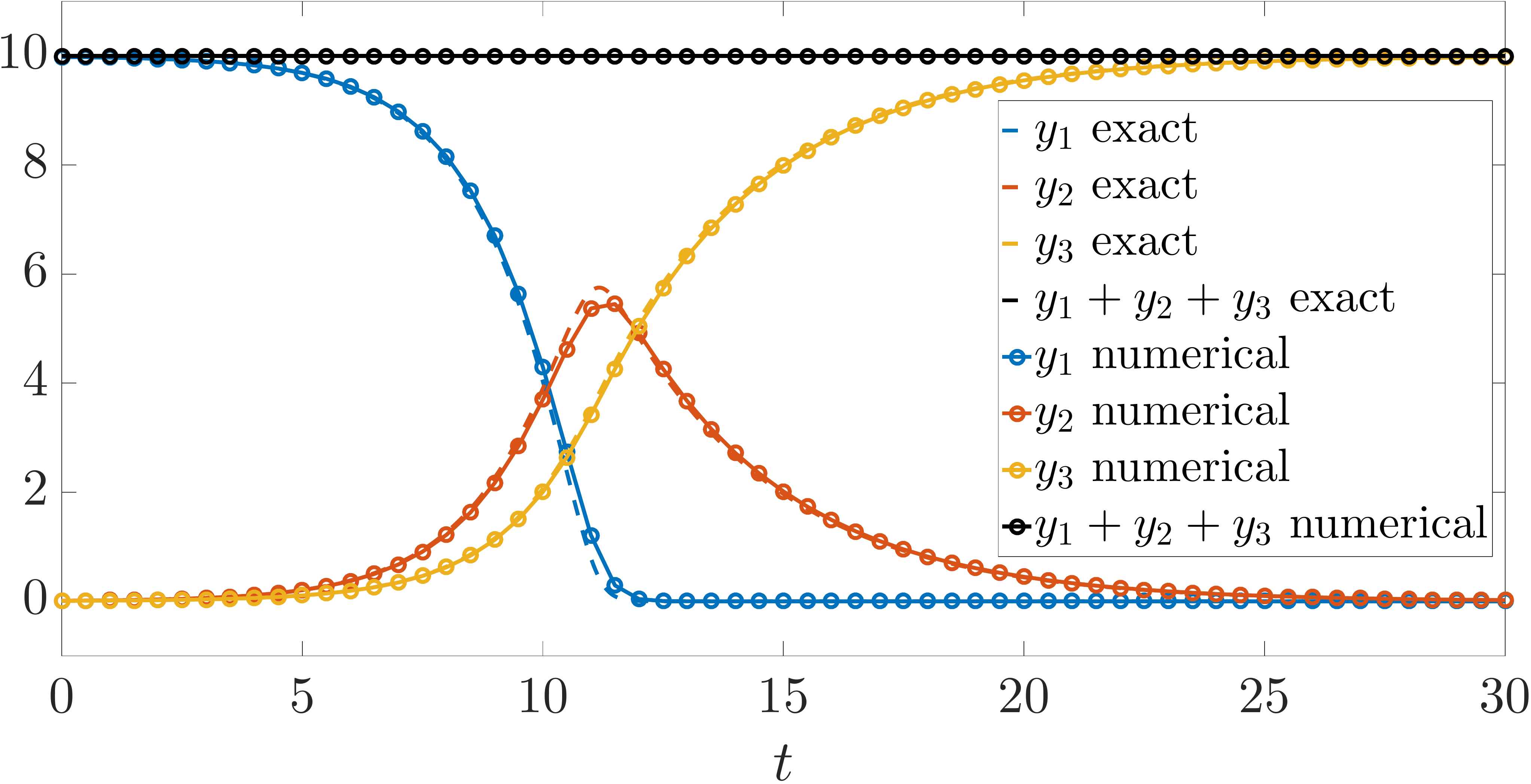}
 \caption{MPRK43IIncs(1/2)}
 \end{subfigure}
 \caption{Numerical solutions of the nonlinear test problem \eqref{eq:nonlintest} for different MPRK43 schemes with time step size $\Delta t=0.5$.}
 \label{fig:nonlin}
\end{figure}

\begin{figure}[htb]
\centering
\begin{subfigure}{.49\textwidth}
\centering
 \includegraphics[width=\textwidth]{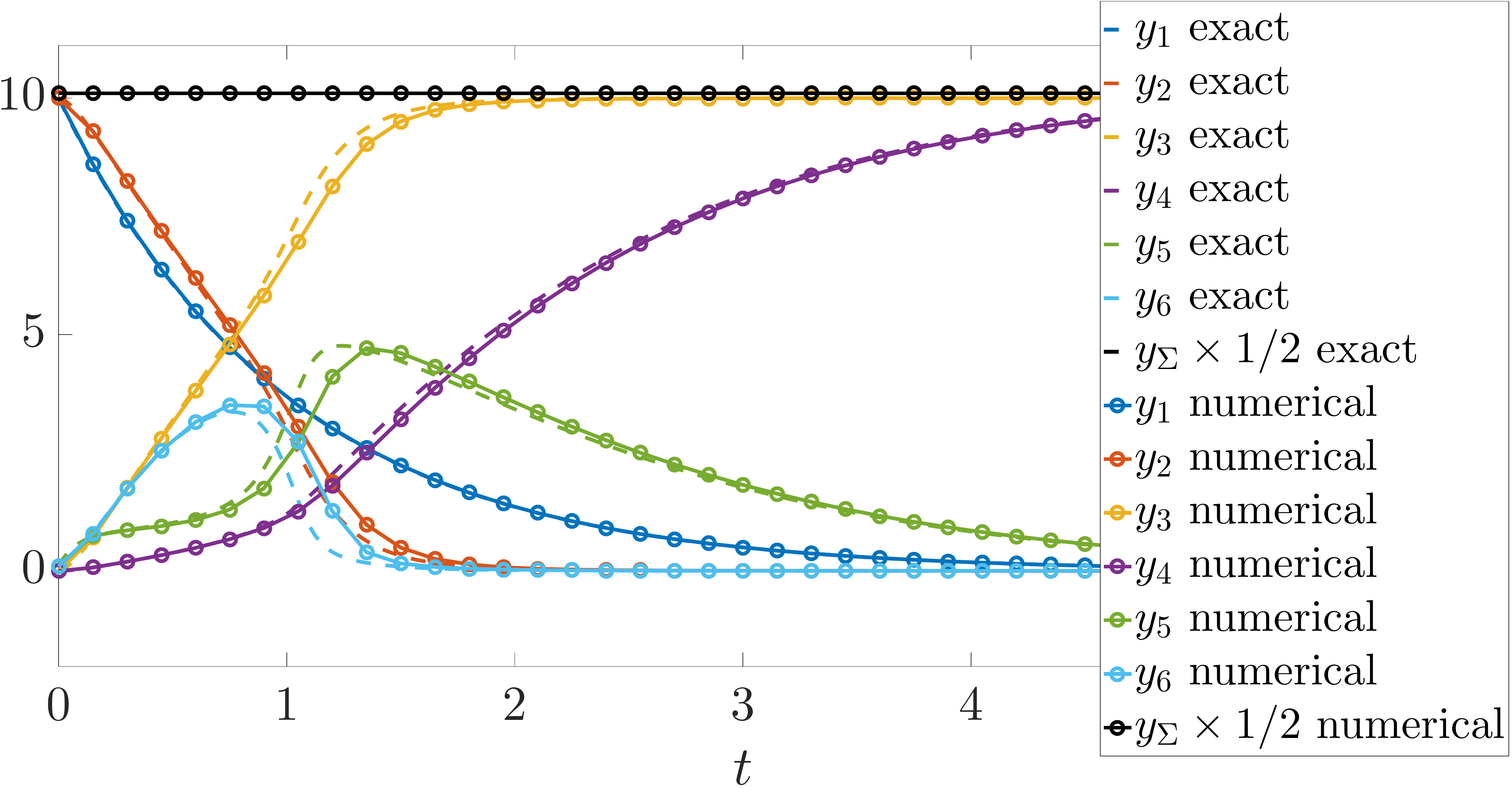}
 \caption{MPRK43I($1,1/2$)}
 \end{subfigure}
 \hfill
\begin{subfigure}{.49\textwidth}
\centering
 \includegraphics[width=\textwidth]{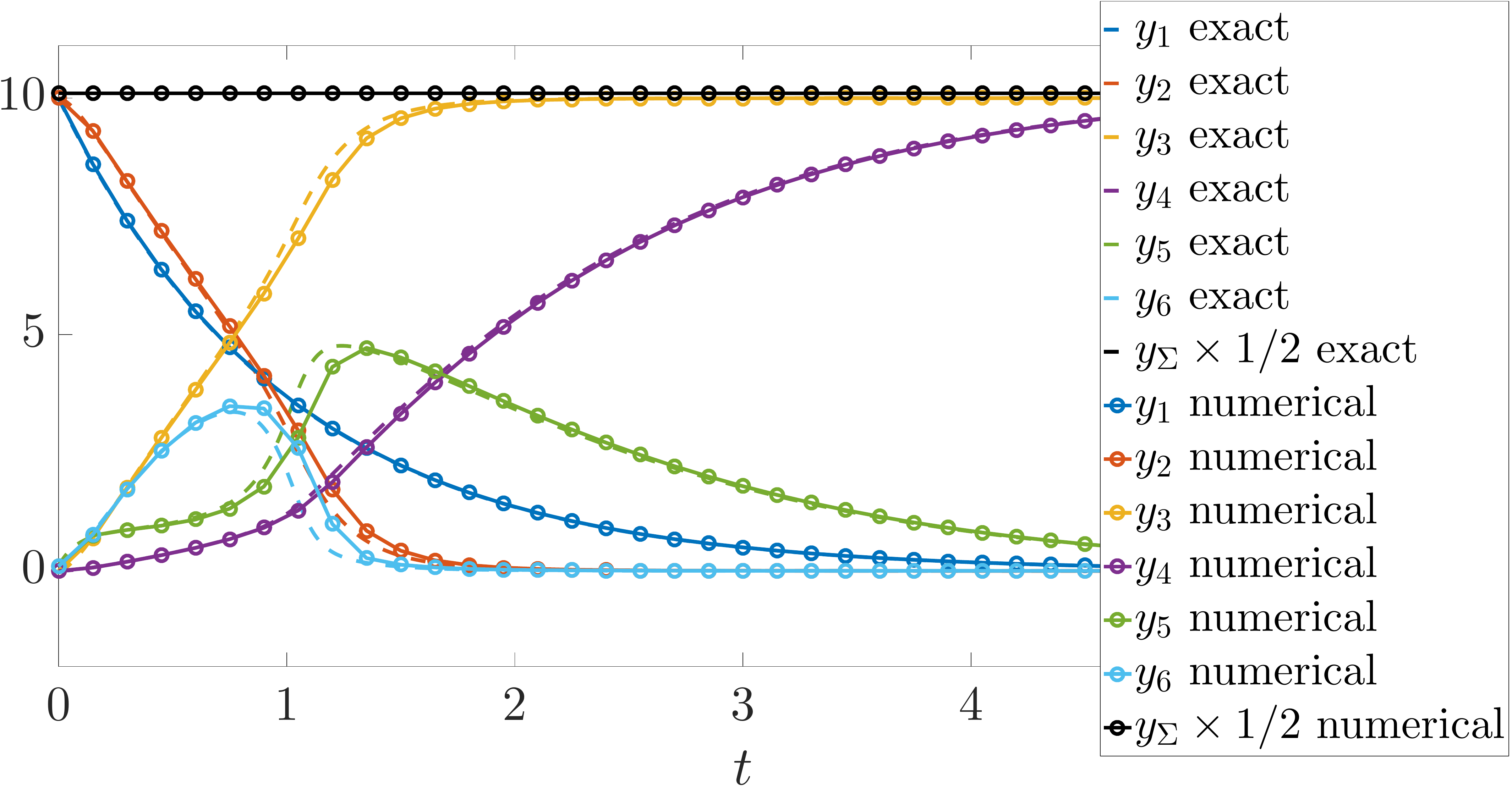}
 \caption{MPRK43Incs($1,1/2$)}
 \end{subfigure} 
 \par
\vspace{2\baselineskip} 
 \begin{subfigure}{.49\textwidth}
\centering
 \includegraphics[width=\textwidth]{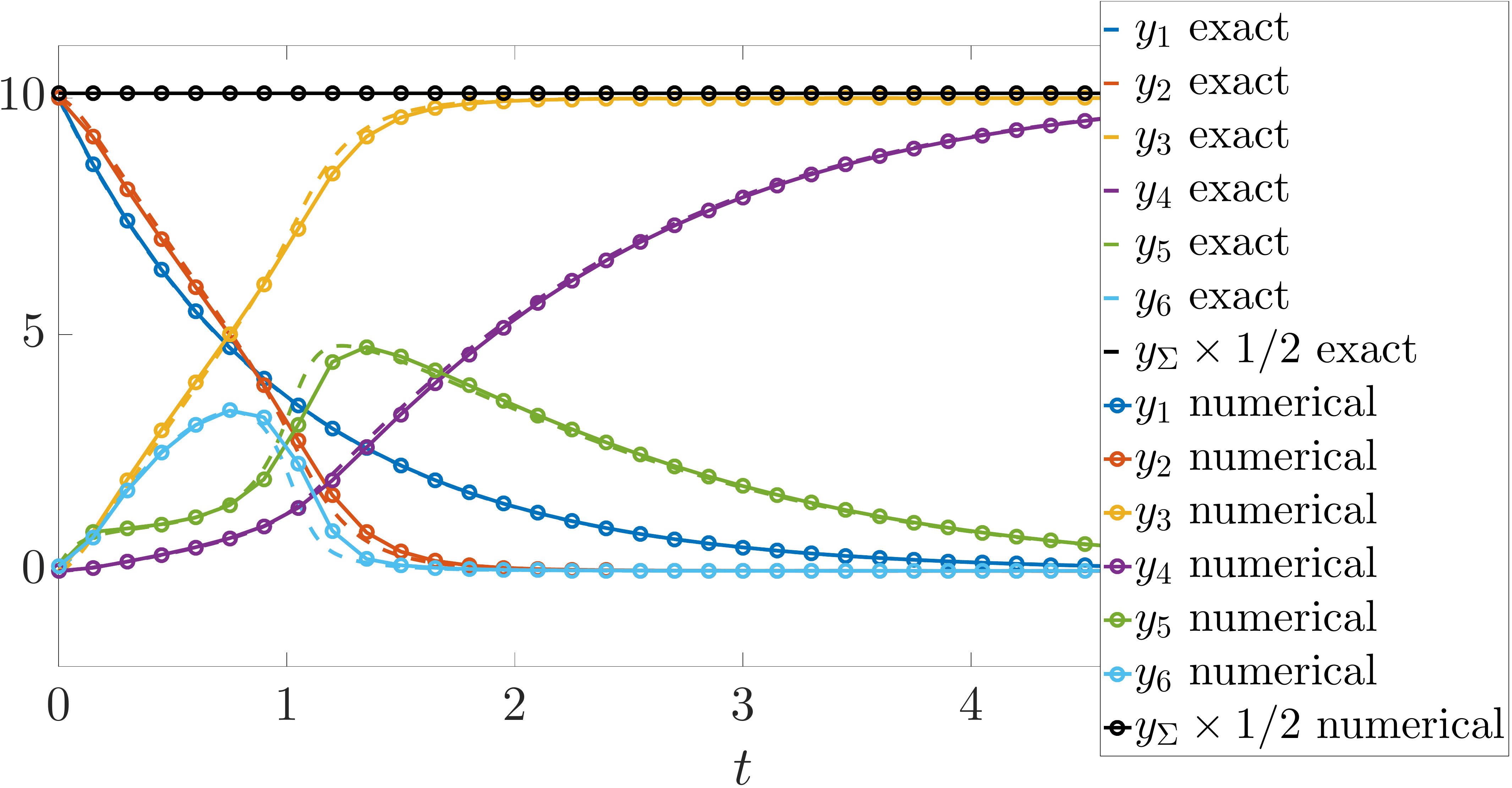}
 \caption{MPRK43I($1/2,3/4$)}
 \end{subfigure}
\hfill
 \begin{subfigure}{.49\textwidth}
\centering
 \includegraphics[width=\textwidth]{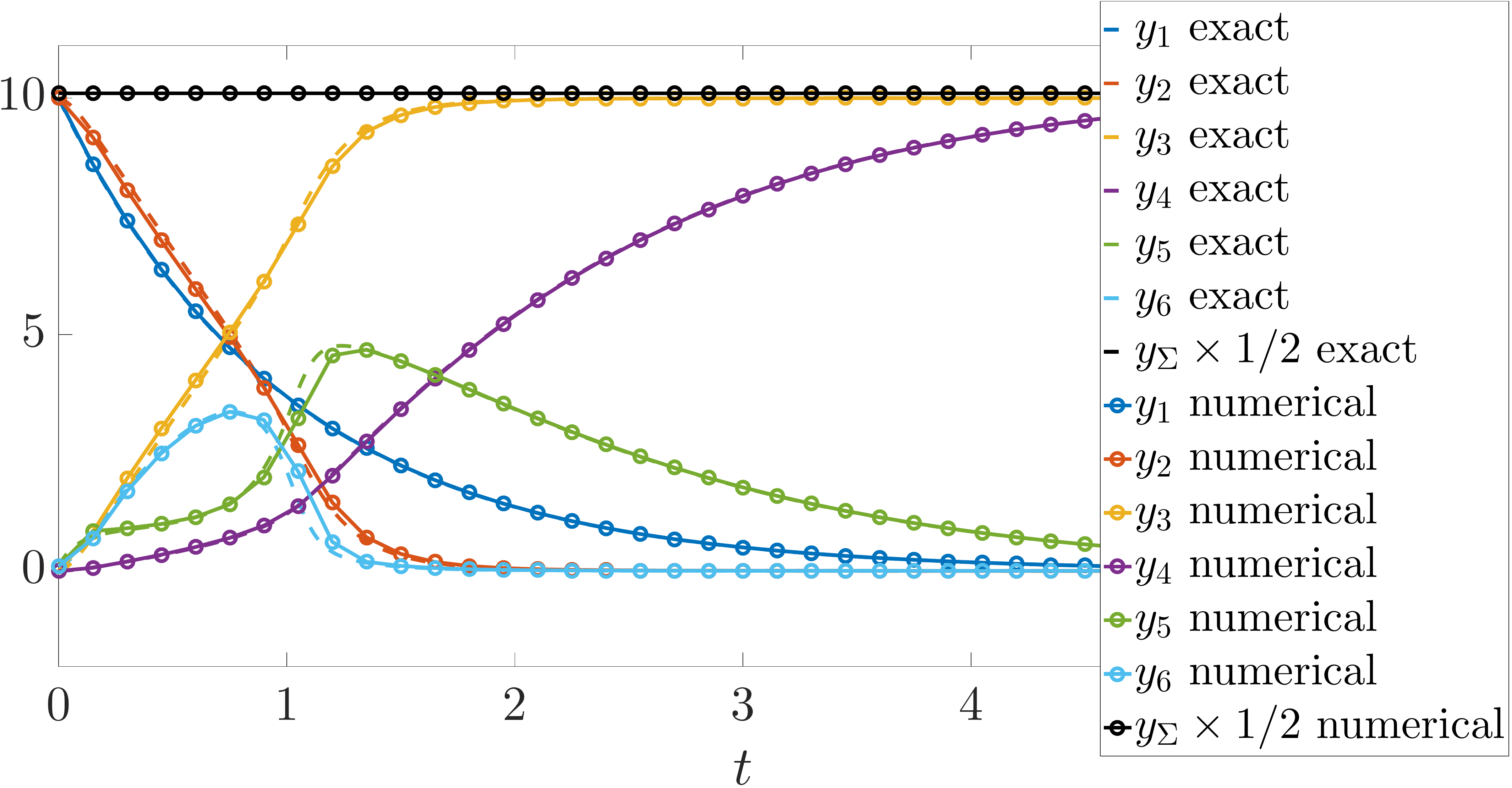}
 \caption{MPRK43Incs($1/2,3/4$)}
 \end{subfigure}
  \par
 \vspace{2\baselineskip}
  \begin{subfigure}{.49\textwidth}
\centering
 \includegraphics[width=\textwidth]{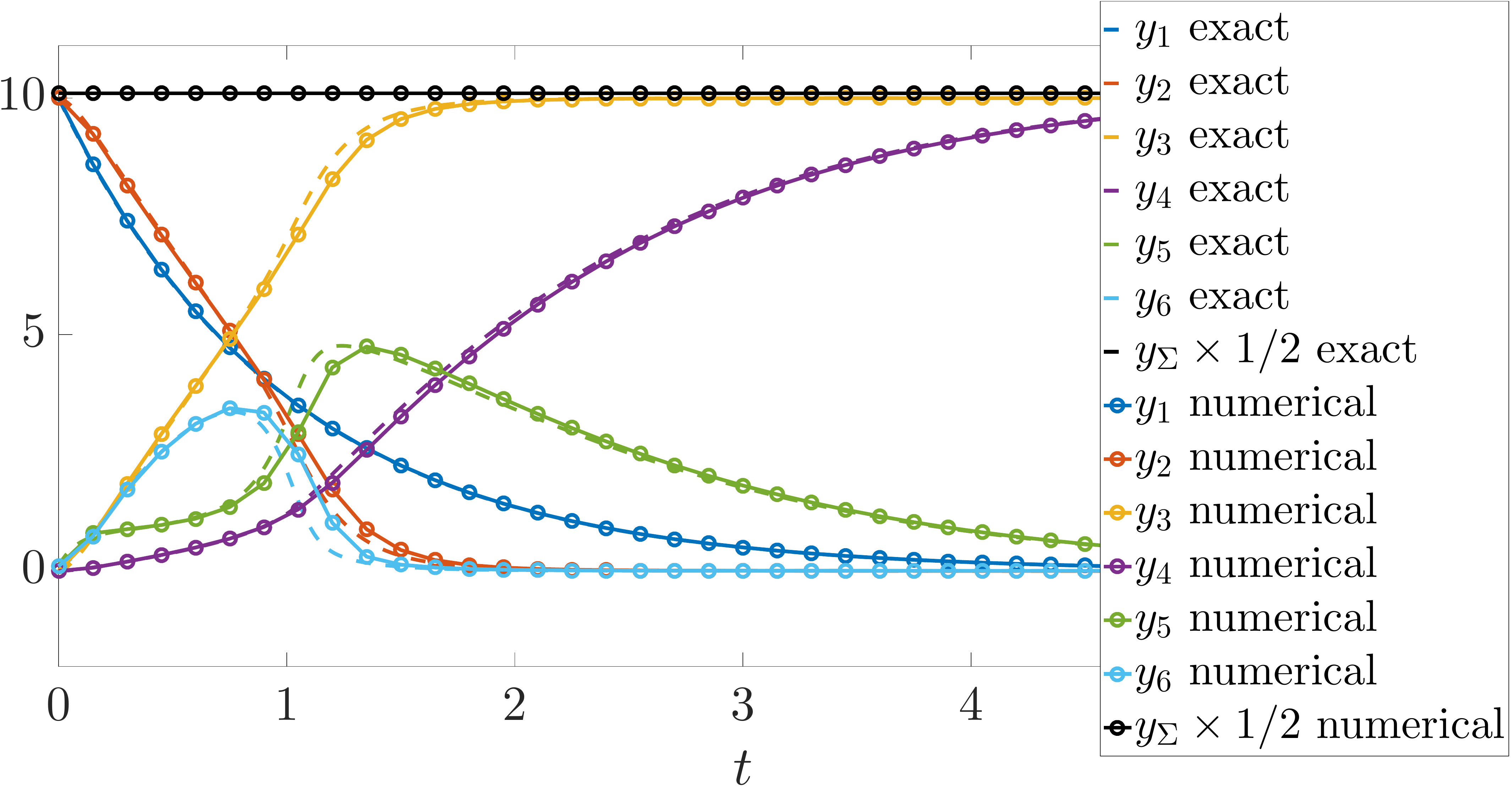}
 \caption{MPRK43II(1/2)}
 \end{subfigure}
 \hfill
  \begin{subfigure}{.49\textwidth}
\centering
 \includegraphics[width=\textwidth]{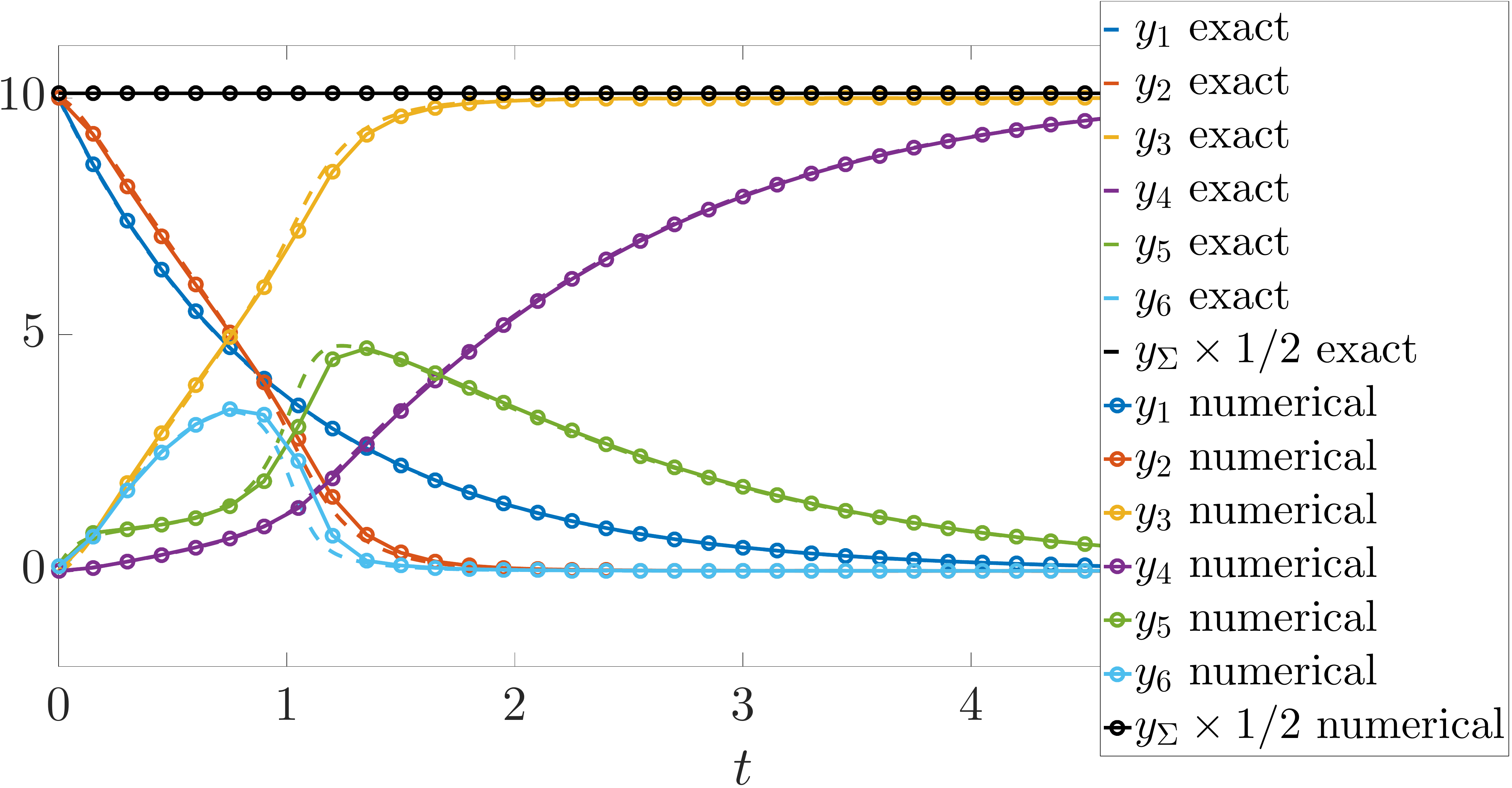}
 \caption{MPRK43IIncs(1/2)}
 \end{subfigure}
 \caption{Numerical solutions of the Brusselator problem \eqref{eq:brusselator} for different MPRK43 schemes with time step size $\Delta t=0.15$. The term $y_\Sigma$, which appears in the legend, is defined as $y_\Sigma=y_1+\dots+y_6$.}
 \label{fig:brusselator}
\end{figure}
In this section, we confirm the theoretical convergence order of the novel MPRK43 schemes. 
We also show numerical approximations of MPRK43 schemes applied to the stiff Robertson problem \eqref{eq:robertson}, the nonlinear test problem \eqref{eq:nonlintest} and the Brusselator problem \eqref{eq:brusselator}. 

To visualize the order of the MPRK schemes we use a relative error $E$ taken over all time steps and all constituents:
\begin{equation*}
 E = \frac{1}{N}\sum_{i=1}^N E_i,\quad E_i = \Bigl(\frac{1}{M}\sum_{m=1}^M y_i(t^m)\Bigr)^{\!\!-1}\Bigl(\frac{1}{M}\sum_{m=1}^M\left(y_i(t^m) - y_i^m\right)^2\Bigr)^{\!\!1/2},
\end{equation*}
where $M$ denotes the number of executed time steps. 
To compute the error $E$ we need to know the analytic solution, which is known for the linear test case, but not for the other test problems.
Hence, we computed a reference solution, using the \textsc{Matlab} functions \texttt{ode45} for the non-stiff nonlinear problems and \texttt{ode23s} for the Robertson problem. 
In both cases we utilized the tolerances $\mathtt{AbsTol}=\mathtt{RelTol}=10^{-10}$.

\subsection*{Convergence order}
Figure~\ref{fig:testorder} shows error plots of six MPRK43 schemes applied to the linear test problem \eqref{eq:lintest}, the nonlinear test problem \eqref{eq:nonlintest} and the Brusselator \eqref{eq:brusselator}.
In all cases the third order accuracy is confirmed.
Moreover, Figure~\ref{fig:testorder1} shows that MPRK43I(1,1/2) is less accurate than MPRK43IIncs(1,1/2) and MPRK43I(1/2,3/4) is more accurate than MPRK43Incs(1/2), when applied to the linear test problem. 
Hence, we cannot make a general statement, whether to choose $\delta=0$ or $\delta=1$ in the MPRK43 schemes.

\subsection*{Stiff problems}
Figure~\ref{fig:robertson} shows numerical approximations of six MPRK43 schemes applied to the stiff Robertson problem \eqref{eq:robertson}. As mentioned, the time step size in the $k$th time step was chosen as $\Delta t_k=4^{k-1}\Delta t_0$ with initial time step size $\Delta t_0=10^{-6}$.
Hence, only 29 time steps are necessary to traverse the time interval $[10^{-6},10^{10}]$.
The small initial time step was chosen to obtain an adequate resolution of the component $y_2$ in the starting phase. 
To visualize the evolution of $y_2$, it is multiplied by $10^4$.

All six schemes generate adequate solutions. 
For this test problem, the variants with conservative stage values (left column) can be seen to be more accurate than those with non-conservative stage values (right column). 
But the overall accuracy is excellent with regard to the fairly large time steps in use.

In \cite{KopeczMeister2017} we reported that some MPRK22ncs schemes generate oscillations, when applied to the Robertson problem. 
In case of the MPRK43 schemes, we did not encounter any issues.

Like in \cite{KopeczMeister2017}, we additionally show numerical solutions of the six MPRK43 schemes applied to the nonlinear test problem \eqref{eq:nonlintest} and the Brusselator \eqref{eq:brusselator} in Figures~\ref{fig:nonlin} and \ref{fig:brusselator}.
\section{Summary and Outlook}
In this paper we have extended the work of \cite{KopeczMeister2017} to third order by deriving necessary and sufficient conditions for three-stage third order schemes. 
We also introduced the MPRK43I and MPRK43II schemes, which, to our knowledge, are the first third order Patankar-type schemes presented in literature. 
These schemes can be regarded as four-stage third order MPRK schemes and it is a future research topic, to investigate the construction of three-stage third order MPRK schemes.
In addition, the search for other possible PWDs is of interest as well.

The numerical experiments have shown that the MPRK43 schemes are capable of integrating stiff ODEs, such as the Robertson problem.
However, in absence of a thorough analysis of truncation errors and stability, we cannot make statements which schemes of the MPRK43 family are preferable. 
This is a future research topic as well.
\clearpage
\newcommand{\etalchar}[1]{$^{#1}$}

\end{document}